\theoremstyle{plain}
\newtheorem{theorem}{Theorem}[section]
\newtheorem{corollary}[theorem]{Corollary}
\newtheorem{lemma}[theorem]{Lemma}
\newtheorem{proposition}[theorem]{Proposition}
\theoremstyle{definition}
\newtheorem{definition}[theorem]{Definition}
\theoremstyle{remark}
\newtheorem{remark}{Remark}[section]
\newcommand{\knabla}{%
  \setbox0=\hbox{$\triangle $}%
  \raisebox{\ht0}{\rotatebox{180}{$\triangle $}}}
\begin{document}

\title{Regular flat structure and generalized Okubo system}
\author{Hiroshi Kawakami and Toshiyuki Mano}
\maketitle
\begin{abstract}

 We study a relationship between regular flat structures and generalized Okubo systems.
 We show that the space of variables of isomonodromic deformations of a regular generalized Okubo system 
 can be equipped with a flat structure.
 As its consequence, we introduce flat structures 
 on the spaces of independent variables of generic solutions to (classical) Painlev\'e equations (except for PI).
 In our framework, the Painlev\'e equations PVI-PII can be treated
  uniformly as just one system of differential equations called the four-dimensional extended WDVV equation.
 Then the well-known coalescence cascade of the Painlev\'e equations
 corresponds to the degeneration scheme of the Jordan normal forms of a square matrix of rank four.

{\it AMS 2010 Subject Classification:} 34M55, 34M56, 33E17, 32S40. \vspace{5pt} \\
{\it Key words:} flat structure, Frobenius manifold, WDVV equation, (generalized) Okubo system, Painlev\'e equation.
\end{abstract}

\section{Introduction} \label{sec:introduction}

At the end of 1970's, K. Saito introduced the notion of flat structure 
in order to study the structure of the parameter spaces of isolated hypersurface singularities.
In particular he defined and constructed flat coordinates on the orbit spaces of finite real reflection groups in \cite{Sai2,SYS}
with his collaborators T. Yano and J. Sekiguchi.
After their pioneering work, B. Dubrovin \cite{Du} unified both the flat structure formulated by K. Saito and the WDVV equation
which arises from the 2D topological field theory as Frobenius manifold structure.
In his theory on Frobenius manifolds, Dubrovin studied a relationship between Frobenius manifolds 
and isomonodromic deformations of linear differential equations of certain type.
Especially he showed that there is a correspondence between three dimensional regular semisimple Frobenius manifolds
(massive Frobenius manifolds) and generic solutions to a one-parameter family of the sixth Painlev\'e equation.
Up to now, there are several generalizations of Frobenius manifold such as $F$-manifold by C. Hertling and Y. Manin \cite{HM,He}
and Saito structure (without metric) by C. Sabbah \cite{Sab}.
A. Arsie and P. Lorenzoni \cite{AL0,Lo} showed that three-dimensional regular semisimple bi-flat $F$-manifolds are parameterized 
by solutions to (full-parameter) Painlev\'e VI equation,
which may be regarded as an extension of Dubrovin's result. 
Furthermore Arsie-Lorenzoni \cite{AL1} showed that three-dimensional regular non-semisimple bi-flat $F$-manifolds are parameterized 
by solutions to Painlev\'e V and IV equations.

The theory of linear differential equations on a complex domain is a classical branch of Mathematics.
In recent years there has been great progress in this branch  as a turning point 
with the introduction of the notions of {\it middle convolution} and {\it rigidity index} by N. M. Katz \cite{Katz}.
Especially T. Oshima developed a classification theory of Fuchsian differential equations 
in terms of their rigidity indices and spectral types \cite{Oshi1,Oshi2,OshiB}. 
(\cite{HaBook} is a nice introductory text on the ``Katz-Oshima theory''.)
{\it Okubo systems} play a central role in these developments (cf. \cite{DR1,DR2,Oshi2,Yo}):
a matrix system of linear differential equations with the form
 \begin{equation} \label{eq:introOkubo}
   (z-T)\frac{dY}{dz}=-B_{\infty}Y,
 \end{equation}
 where $T, B_{\infty}$ are constant square matrices,
 is an Okubo system if $T$ is diagonalizable.
 If $T$ is not necessarily diagonalizable, (\ref{eq:introOkubo}) is said to be a {\it generalized Okubo system},
 which was studied by H. Kawakami \cite{Kaw,KawDT} in order to generalize the middle convolution 
 to linear differential equations with irregular singularities.

In this paper, we study isomonodromic deformations of a generalized Okubo system.
We show that the space of variables of isomonodromic deformations of a generalized Okubo system can be equipped with
a Saito structure (without a metric).
(In the sequel, we abbreviate ``Saito structure (without metric)'' to ``Saito structure'' for brevity.
In addition, ``flat structure'' in this paper stands for the same meaning as ``Saito structure'', c.f. \cite[\S 1]{KMS}.)
A major motivation of the present paper is Arsie-Lorenzoni's work \cite{AL1,Lo}, 
in which it was shown that three-dimensional regular bi-flat $F$-manifolds are parametrized by solutions to
the Painlev\'e IV,V,VI equations.
(It is proved in \cite{AL2,KoMiSh} that Arsie-Lorenzoni's bi-flat $F$-manifold is equivalent to Sabbah's Saito structure.)
 In \cite{AL1}, it has been left as an open problem how the remaining Painlev\'e equations (i.e. PIII, PII, PI) can be related with Saito structures.
 We give an answer to this problem in the present paper:
 we show that solutions to the Painlev\'e III and II are related with four-dimensional regular Saito structures.
As for the Painlev\'e I, it can be related with isomonodromic deformations of a generalized Okubo system of rank seven (which is minimal rank),
however it does not satisfy the regularity condition.
For this reason, PI can not be treated in the framework of this paper.
As another application, we study the initial value problem
for regular Saito structures.
L. David and C. Hertling \cite{DH} studied the initial condition theorem for regular $F$-manifolds and Frobenius manifolds.
$F$-manifold is a lower structure of Saito structure.
The initial condition for a regular Saito structure consists of that for the underlying regular $F$-manifold 
and a data to specify a flat coordinate system and its weight 
(c.f. Corollary~\ref{cor:initial} and Remark~\ref{rem:initialFS}).

This paper is constructed as follows.
In Section~\ref{sec:okuboiso}, we introduce the notion of {\it extended generalized Okubo system}
as a completely integrable Pfaffian system extending a generalized Okubo system.
We see that an extended generalized Okubo system is equivalent to
 an isomonodromic deformation of a generalized Okubo system (Lemma~\ref{lem:isomono}).
We observe that an extended generalized Okubo system naturally induces a {\it Saito bundle} on a complex domain,
which will be used in Section~\ref{subsec:Okubo}.
In Section~\ref{subsec:Saito}, we review the general theory of Saito structure.
Particularly we introduce a potential vector field and the extended WDVV equation for a Saito structure.
In Section~\ref{subsec:Okubo},
we show that the space of variables of an extended generalized Okubo system can be equipped with
a Saito structure under some generic condition (Theorem~\ref{saitojacobian}).
The arguments there closely follow \cite[Chapter VII]{Sab}:
we find a condition for that the Saito bundle induced by an extended generalized Okubo system in Section~\ref{sec:okuboiso}
has a {\it primitive section}.
In Section~\ref{sec:flatPainleve},
we introduce Saito structures on the spaces of independent variables of solutions to the Painlev\'e equations.
It is well known that each Painlev\'e equation (PI-PVI) can be derived 
from isomonodromic deformations of certain $2\times 2$ system of linear differential equations.
Kawakami \cite{KawDT,Kaw} showed that any system of linear differential equations can be transformed 
into a generalized Okubo system.
Using Kawakami's construction, we obtain a generalized Okubo system 
whose isomonodromic deformations are governed by solutions to each Painlev\'e equation.
Then in virtue of Theorem~\ref{saitojacobian},
we see that generic solutions to the Painlev\'e VI,V,IV equations correspond to
solutions to the three-dimensional extended WDVV equation 
satisfying some regularity condition (Theorem~\ref{thm:extWDVVandPVIPVPIV}),
which provides another proof of Arsie-Lorenzoni's result mentioned above.
Moreover generic solutions to the Painlev\'e III and II equations correspond to solutions 
to the four-dimensional extended WDVV equation.
In Section~\ref{sec:coalcas},
we treat uniformly the Painlev\'e VI-II equations.
In the previous section, we have seen that PIII and PII are related with four-dimensional regular Saito structures.
The system of linear differential equations whose isomonodromic deformations are governed by 
PVI,PV,PIV can also be transformed into generalized Okubo systems of rank four.
Then we see that generic solutions to PVI-PII correspond to solutions to the four-dimensional extended WDVV equation
 (Theorem~\ref{thm:WDVVPVI-PII}).
The well-known coalescence cascade of the Painlev\'e equations
\begin{equation} 
 \begin{CD}
   \mbox{PVI} @>>> \mbox{PV} @>>> \mbox{PIV}   @.  \\
   @.                             @VVV                     @VVV              @.  \\
   @.                            \mbox{PIII} @>>> \mbox{PII} @>>> \mbox{PI} 
 \end{CD}
\end{equation}
corresponds in our framework (except for PI) to the degeneration scheme of the Jordan normal forms of a square matrix of rank four:
\[
    \begin{CD}
   \begin{pmatrix}  z_{1} & {} & {} & O\\ {} & z_{2} & {} & {} \\ {} & {} & z_{3} & {} \\ O & {} & {} & z_{4} \end{pmatrix}
   @>>> \begin{pmatrix} z_{1} & 1 & {} & O\\ {} & z_{1} & {} & {} \\ {} & {} & z_{2} & {} \\ O & {} & {} & z_{3} \end{pmatrix}
   @>>> \begin{pmatrix} z_{1} & 1 & {} & O\\ {} & z_{1} & 1 & {} \\ {} & {} & z_{1} & {} \\ O & {} & {} & z_{2} \end{pmatrix} \\
   @.                             @VVV                     @VVV                \\
   @.       \begin{pmatrix} z_{1} & 1 & {} & O\\ {} & z_{1} & {} & {} \\ {} & {} & z_{2} & 1 \\ O & {} & {} & z_{2} \end{pmatrix}
   @>>> \begin{pmatrix} z_{1} & 1 & {} & O \\ {} & z_{1} & 1 & {} \\ {} & {} & z_{1} & 1 \\ O & {} & {} & z_{1}
   \end{pmatrix}.
 \end{CD}
\]
It is a rather unexpected aspect of the Painlev\'e equations that one can treat them (except PI) all together
 as just one system of differential equations.
It seems to be an interesting problem to study analytic properties of Painlev\'e transcendents uniformly
in terms of potential vector fields (see \cite{KMS1,KMS,KMS3,KMS4} for algebraic solutions to PVI).
We will study this problem elsewhere.
In Appendix \ref{app:isomonodormicdef}, we give a proof of Lemma~\ref{lem:isomono}.
For this aim, we briefly review the theory of isomonodromic deformations of systems of linear differential equations 
with irregular singularities following Jimbo-Miwa-Ueno \cite{JMU}.
In Appendix \ref{app:compOkubo}, following \cite{KawDT,Kaw},
we explain how to construct a generalized Okubo system from 
a given system of linear differential equations.
This construction is used in Sections~\ref{sec:flatPainleve} and~\ref{sec:coalcas}.
\vspace{5pt}
\\
{\bf Acknowledgments}
The authors thank Professor C. Hertling for reading a preprint and giving useful comments on it
and Professor M. Noumi for useful discussions.
This work was partially supported by JSPS KAKENHI Grant Numbers 25800082, 17K05335.

\section{Generalized Okubo system} \label{sec:okuboiso}

For $N\in\mathbb{N}$, let $T$ and $B_{\infty}$ be $N\times N$-matrices.
The system of ordinary linear differential equations
 \begin{equation} \label{eq:okubo}
   (zI_N-T)\frac{dY}{dz}=-B_{\infty}Y,
 \end{equation}
 where $I_N$ denotes the identity matrix of rank $N$,
 is said to be an {\it Okubo system} if the matrix $T$ is diagonalizable (cf. \cite{Ok}),
  and said to be a {\it generalized Okubo system} if $T$ is not necessarily diagonalizable (cf. \cite{Kaw}).
 Let $U$ be a simply-connected domain in $\mathbb{C}^N$.
We extend (\ref{eq:okubo}) to a completely integrable Pfaffian system of the form
\begin{equation} \label{eq:okubopfaff}
dY=-(zI_N-T)^{-1}\bigl(dz+\tilde{\Omega}\bigr)B_{\infty}Y
\end{equation}
on $\mathbb{P}^1\times U$, 
or equivalently an integrable meromorphic connection on the trivial bundle of rank $N$ over $\mathbb{P}^1\times U$:
\begin{equation}
  \nabla^{\mathcal{G}\mathcal{O}}:=d+(zI_N-T)^{-1}(dz+\tilde{\Omega})B_{\infty},
\end{equation}
where $z$ is a non-homogeneous coordinate of $\mathbb{P}^1$ and
 $\tilde{\Omega}$ is an $N\times N$ matrix valued $1$-form on $U$ satisfying the conditions below.
We put the following assumptions on (\ref{eq:okubopfaff}):

\begin{enumerate}

 \item[(A1)] $T$ (resp. $\tilde{\Omega}$) is an $N\times N$ matrix whose entries are holomorphic functions 
 (resp. holomorphic $1$-forms) on $U$.
 
 \item[(A2)] $B_{\infty}=\mbox{diag}(\lambda_1,\dots,\lambda_N)$,
where $\lambda_i\in\mathbb{C}$ satisfy $\lambda_i-\lambda_j\not\in \mathbb{Z}\setminus\{0\}$ for $i\not=j$.

 \item[(A3)] The matrix $T$ is generically regular. Namely, let $T$ have the Jordan normal form
    \begin{equation*} 
     T\sim J_1\oplus \cdots \oplus J_n,
   \end{equation*}
    where $J_k$ is a Jordan block of rank $m_k$:
   \begin{equation} \label{eq:Jnf}
      J_k=\begin{pmatrix}
          z_{k,0} & 1 & {} & O \\
          {} & \ddots & \ddots&{}  \\
          {} & {} & \ddots &1 \\
          O & {} & {} & z_{k,0}
         \end{pmatrix},
   \end{equation}
   and $m_k\in\mathbb{N}$, $1\leq k\leq n$ are subject to $m_1+\cdots +m_n=N$,
   and put
   \begin{gather*}
      H(z)=\det (zI_N-T)=\prod_{k=1}^n(z-z_{k,0})^{m_k}, \\
      H_{{\rm red}}(z)=\prod_{k=1}^{n}(z-z_{k,0}), \ \ 
      \delta_{H_{{\rm red}}}=\prod_{k<l}(z_{k,0}-z_{l,0})^2.
   \end{gather*}
   Then $\delta_{H_{{\rm red}}}$ is not identically zero on $U$.
\end{enumerate}

 By the assumption (A3), 
 there exists an invertible matrix $P$ whose entries are holomorphic functions 
 on an open set $W\subset U\setminus \{\delta_{H_{{\rm red}}}=0\}$
 such that
 \begin{equation} \label{eq:TandZ}
   P^{-1}TP=Z_1\oplus \cdots \oplus Z_n,
 \end{equation}
 where
 \begin{equation} \label{eq:Zk}
   Z_k=\begin{pmatrix}
              z_{k,0} & z_{k,1} & \cdots & z_{k,m_k-1} \\
                       {} & \ddots   & \ddots & \vdots \\
                       {} &        {}   & \ddots & z_{k,1} \\
                       O &        {}   &       {}  & z_{k,0}
           \end{pmatrix}, \ \ \ k=1,\dots ,n.
 \end{equation}
 Then we assume also the following (A4):
 \begin{enumerate}
   \item[(A4)]
   $$\bigwedge_{k=1}^ndz_{k,0}\wedge\cdots\wedge dz_{k,m_k-1}\neq 0$$ at generic points on $W$.
   This means that $(z_{k,0},\dots,z_{k,m_k-1})_{1\leq k\leq n}$ can be taken as a coordinate system 
   on a sufficiently small open set in $W$.
 \end{enumerate}
 Note that $Z_k$ is written as
  \[
   Z_k=\sum_{l=0}^{m_k-1}z_{k,l}\Lambda_k^l
 \]
 by introducing an $m_k\times m_k$-matrix $\Lambda_k$:
 \begin{equation} \label{eq:matrixLambda}
   \Lambda_k=\begin{pmatrix} 0 & 1 & {} & O \\ {} & \ddots & \ddots & {} \\ {} & {} & \ddots & 1 \\ O & {} & {} & 0 \end{pmatrix}.
 \end{equation}

\begin{lemma}\label{lem:lemma2.2}
 Assume that $B_{\infty}$ is invertible.
 Then the Pfaffian system $(\ref{eq:okubopfaff})$ is completely integrable
 if and only if the matrices $T, \tilde{\Omega}, B_{\infty}$ satisfy the following system of equations
 \begin{gather}
  [T,{\tilde \Omega}]=O,\quad {\tilde \Omega}\wedge {\tilde \Omega}=O, \label{eq:commute} \\
  dT+\tilde{\Omega}+[\tilde{\Omega},B_{\infty}]=O, \quad d\tilde{\Omega}=O. \label{eq:ci1} 
\end{gather}
 In particular, $T$ and $\tilde{\Omega}$ are simultaneously block diagonalizable:
 for an invertible matrix $P$, it holds that
\begin{equation} \label{eq:taikakuBi}
 P^{-1}TP=Z_1\oplus \cdots \oplus Z_n,\ \ \ \ 
 P^{-1}\tilde{\Omega}P=-(dZ_1\oplus \cdots \oplus dZ_n).
\end{equation}
\end{lemma}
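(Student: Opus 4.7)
The plan is to compute the curvature $R := d\omega + \omega \wedge \omega$ of the connection $\omega = (zI_N - T)^{-1}(dz + \tilde{\Omega}) B_{\infty}$ and show that $R = O$ (which is the complete integrability of \eqref{eq:okubopfaff}) reduces precisely to the four stated equations; the block-diagonal conclusion will then follow from $[T,\tilde{\Omega}] = O$. Since the $z$-dependence of $R$ enters only through $A := (zI_N - T)^{-1}$, I will split $R$ by bidegree on $\mathbb{P}^1 \times U$—into its $dz \wedge du_i$ part and its $du_i \wedge du_j$ part—and analyze each coefficient via Laurent expansion in $z^{-1}$ around $z = \infty$, using $dA = -A^2\,dz + A\,dT\,A$ and $A = \sum_{n \geq 0} T^n / z^{n+1}$.

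A direct calculation shows that the coefficient of $dz \wedge du_i$ in $R$ equals
\[
-A^2 \tilde{\Omega}_i B_{\infty} - A(\partial_i T) A B_{\infty} + [A B_{\infty},\, A \tilde{\Omega}_i B_{\infty}]
\]
(writing $\tilde{\Omega} = \sum_i \tilde{\Omega}_i\,du_i$). Reading off the $z^{-1}$-coefficient yields $\partial_i T + \tilde{\Omega}_i + [\tilde{\Omega}_i, B_{\infty}] = O$, i.e.\ $dT + \tilde{\Omega} + [\tilde{\Omega}, B_{\infty}] = O$; the successive higher-order coefficients, after substituting the previous relation, reduce to constraints of the form $(I - B_{\infty})[T^n, \tilde{\Omega}_i] = O$, and bootstrapping these with the non-resonance hypothesis (A2) and the invertibility of $B_{\infty}$ eventually forces $[T, \tilde{\Omega}] = O$. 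A parallel analysis of the $du_i \wedge du_j$-coefficient gives at leading order $\partial_i \tilde{\Omega}_j - \partial_j \tilde{\Omega}_i = O$, i.e.\ $d\tilde{\Omega} = O$, and at subleading order—after substituting the other three relations—reduces to $[\tilde{\Omega}_i, \tilde{\Omega}_j] = O$, i.e.\ $\tilde{\Omega} \wedge \tilde{\Omega} = O$. The converse direction is routine: substituting the four equations back into the Laurent expansion makes every coefficient of $R$ cancel.

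For the simultaneous block diagonalization, $[T, \tilde{\Omega}] = O$ places $\tilde{\Omega}$ in the centralizer of $T$. Because the $Z_k$ have pairwise disjoint spectra on $W$ (by (A3)), the centralizer of $\oplus_k Z_k$ decomposes as $\oplus_k \mathrm{Cent}(Z_k)$, so $P^{-1} \tilde{\Omega} P$ is block-diagonal; by (A4) each $Z_k$ is a regular element with centralizer $\mathbb{C}[\Lambda_k]$, so each diagonal block $\tilde{\Omega}'_k$ is a polynomial in $\Lambda_k$ with 1-form coefficients. To identify those coefficients as $-dz_{k,l}$, I would project the $P$-conjugate of $dT + \tilde{\Omega} + [\tilde{\Omega}, B_{\infty}] = O$ onto its block-diagonal part, namely
\[
d(\oplus_k Z_k) + [P^{-1} dP,\, \oplus_k Z_k] + P^{-1}\tilde{\Omega} P + [P^{-1}\tilde{\Omega} P,\, P^{-1} B_{\infty} P] = O,
\]
and exploit the residual gauge freedom in $P$ (together with a compatible redefinition of the coordinate functions $z_{k,l}$) to normalize each block $\tilde{\Omega}'_k$ to $-dZ_k$.

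The hard part will be this final normalization. A direct $2 \times 2$ check with one Jordan block shows that for a naïve $P$ bringing $T$ into upper-triangular Toeplitz form, the $\Lambda_k$-coefficient of $\tilde{\Omega}'_k$ acquires a factor $1/(1 + \lambda_j - \lambda_i)$ that is not removable without a further gauge transformation; the non-resonance hypothesis (A2) is precisely what guarantees this transformation is solvable, and the compensating (nonlinear) change of the coordinate functions $(z_{k,l})$ remains locally a diffeomorphism, so that (A4) continues to hold. Verifying this analytic consistency—that the resulting $P$ is holomorphic on $W$ and the adjusted $(z_{k,l})$ still form a local coordinate system—is the delicate heart of the second assertion of the lemma.
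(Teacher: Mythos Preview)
Your route is genuinely different from the paper's. You attack the curvature of $\nabla^{\mathcal{G}\mathcal{O}}$ head-on, Laurent-expand in $z^{-1}$, and then argue the block-diagonal form via the centralizer of $T$ together with a gauge/coordinate normalization. The paper does none of this: it cites \cite{KMS} for the diagonalizable case and then runs a \emph{confluence} argument. Concretely, it builds explicit upper-triangular Toeplitz matrices $P_k(\varepsilon)=\sum_{l} a_{k,l}(\varepsilon)\Lambda_k^{l}$ (with recursively defined $a_{k,l}$) and diagonal matrices $Z_k(\varepsilon)=\mathrm{diag}(z_{k,0},\,z_{k,0}+z_{k,1}\varepsilon,\ldots)$ so that $P_\varepsilon Z_\varepsilon P_\varepsilon^{-1}\to \bigoplus_k Z_k$; applying the diagonalizable result to $T_\varepsilon:=P(P_\varepsilon Z_\varepsilon P_\varepsilon^{-1})P^{-1}$ and letting $\varepsilon\to 0$ yields both the four relations and the identity $P^{-1}\tilde\Omega P=-\bigoplus_k dZ_k$ in one stroke, with no separate gauge step. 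What your approach buys is self-containment and a transparent view of where each hypothesis enters; what it costs is exactly the bookkeeping you flag. One caution on the first half: from the $z^{-3}$ level you actually obtain $(B_\infty-I)[T,\tilde\Omega_i]=O$ (after substituting the $z^{-2}$ relation), and this alone does not annihilate rows where $\lambda_j=1$, so your ``bootstrapping'' really must bring in the $du_i\wedge du_j$ sector as well. And on the second half, the normalization difficulty you isolate in the $2\times 2$ check is precisely what the paper's limiting construction absorbs automatically---the $P$ and the canonical coordinates $z_{k,l}$ are manufactured simultaneously by the confluence, so the compatibility you propose to verify by hand never arises.
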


\begin{proof}
  The assertions in the lemma were proved in \cite{KMS} in the case where $T$ in (\ref{eq:okubopfaff}) is diagonalizable.
   We prove the lemma in the general case by constructing a confluence process from the case where $T$ is diagonalizable.
   
   For $1\leq k\leq n$, we define an $m_k\times m_k$-matrix $P_k(\varepsilon)$ by 
   \[
      P_k(\varepsilon):=\sum_{l=0}^{m_k-1}a_{k,l}\Lambda_k^l,
   \]
   where $a_{k,l}=a_{k,l}(\varepsilon)$ is defined recursively by the following equalities:
   \[
      a_{k,0}=1,\ \ \ \ a_{k,l}=\frac{\sum_{j=0}^{l-1}a_{k,j}z_{k,l-j}}{l\varepsilon z_{k,1}},\ \ \ l=1,\dots ,m_k-1.
   \]
   We put
   \[
      Z_k(\varepsilon):=\mbox{diag}(z_{k,0},z_{k,0}+z_{k,1}\varepsilon, z_{k,0}+2z_{k,1}\varepsilon, \dots,z_{k,0}+(m_k-1)z_{k,1}\varepsilon).
   \]
   Let
   \[
      P_{\varepsilon}:=P_1(\varepsilon)\oplus \cdots \oplus P_n(\varepsilon),\ \ 
      Z_{\varepsilon}:=Z_1(\varepsilon)\oplus \cdots \oplus Z_n(\varepsilon).
   \]
   Then we readily see 
   \begin{equation} \label{eq:Zlimit}
     P_{\varepsilon}Z_{\varepsilon}P_{\varepsilon}^{-1} \to Z_1\oplus \cdots\oplus Z_n \ \ \mbox{ as } \varepsilon \to 0,
   \end{equation}
   from which it follows that
   \begin{equation}
     T_{\varepsilon}:=P(P_{\varepsilon}Z_{\varepsilon}P_{\varepsilon}^{-1})P^{-1}\to T \ \ \mbox{ as } \varepsilon \to 0,
   \end{equation}
   where $P$ is the same as in (\ref{eq:TandZ}).
  
   Since $Z_{\varepsilon}$ is diagonal, it is known in \cite{KMS} that the Pfaffian system
   \begin{equation} \label{eq:gouryuPfaff}
      dY=-(zI_N-T_{\varepsilon})^{-1}\bigl(dz+\tilde{\Omega}_{\varepsilon}\bigr)B_{\infty}Y
   \end{equation}
   is completely integrable if and only if
   $T_{\varepsilon}, \tilde{\Omega}_{\varepsilon}, B_{\infty}$ satisfy (\ref{eq:commute}),(\ref{eq:ci1})
   and $\tilde{\Omega}_{\varepsilon}$ is represented as
   \begin{equation} \label{eq:B(varepsilon)}
      \tilde{\Omega}_{\varepsilon}=-P(P_{\varepsilon}\,dZ_{\varepsilon}\,P_{\varepsilon}^{-1})P^{-1}.
   \end{equation}
   From the limit of (\ref{eq:B(varepsilon)}) and (\ref{eq:Zlimit}) as $\varepsilon\to 0$, we have
    \[
      P^{-1}\tilde{\Omega}P=-(dZ_1\oplus \cdots \oplus dZ_n),
   \]
   and we see that $T, \tilde{\Omega}, B_{\infty}$ satisfy (\ref{eq:commute}),(\ref{eq:ci1}).
\end{proof}

\begin{definition}
  An {\it extended generalized Okubo system} is a Pfaffian system (\ref{eq:okubopfaff}) 
  satisfying the system of equations (\ref{eq:commute}),(\ref{eq:ci1}).  
\end{definition}

\begin{remark} \label{rm:henkan}
Assume that $T$, $\tilde{\Omega}$, $B_{\infty}$ satisfy the system of equations $(\ref{eq:commute}),(\ref{eq:ci1})$. 
Then replacing $B_{\infty}$ by $B_{\infty}-\lambda I_N$ for any $\lambda\in \mathbb{C}$,
we see that $T$, ${\tilde \Omega}$, $B_{\infty}-\lambda I_N$ also satisfy
 $(\ref{eq:commute}),(\ref{eq:ci1})$.
 The corresponding transformation from (\ref{eq:okubopfaff}) into
    \begin{equation*}
       dY^{(\lambda)}=-(zI_N-T)^{-1}\bigl(dz+\tilde{\Omega}\bigr)(B_{\infty}-\lambda I_N)Y^{(\lambda)}
    \end{equation*}
    is realized by the Euler transformation
    \begin{equation*}
      Y\to Y^{(\lambda)}(z):=\frac{1}{\Gamma(\lambda)}\oint (u-z)^{\lambda-1}Y(u)du.
    \end{equation*}
\end{remark}

%
 \begin{lemma} \label{lem:isomono}
   The following two assertions hold.
   \begin{enumerate}
    \item[{\rm (i)}] An extended generalized Okubo system $(\ref{eq:okubopfaff})$ is equivalent to 
    an isomonodromic deformation of $(\ref{eq:okubo})$ in the sense of Jimbo-Miwa-Ueno {\rm \cite{JMU}}.
    
    \item[{\rm (ii)}] If a generalized Okubo system $(\ref{eq:okubo})$ is given on $\mathbb{P}^1\times \{p_0\}$ 
   for $p_0\in U\setminus \{\delta_{H_{{\rm red}}}=0\}$,
   then there exists a unique extended generalized Okubo system on $\mathbb{P}^1\times W$ 
   which coincides with the given $(\ref{eq:okubo})$ restricted on $\mathbb{P}^1\times \{p_0\}$,
   where $W\subset U\setminus \{\delta_{H_{{\rm red}}}=0\}$ is a neighborhood of $p_0$.
   \end{enumerate}
 \end{lemma}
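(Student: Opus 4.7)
The natural approach is to invoke the Jimbo-Miwa-Ueno theory of monodromy-preserving deformations for systems of linear ODEs with irregular singular points. Both assertions will then follow in parallel: (i) identifies the integrability conditions (\ref{eq:commute})-(\ref{eq:ci1}) as the JMU deformation equations, while (ii) is an instance of the Cauchy problem for the associated completely integrable Pfaffian system.

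For part (i), I would first determine the singularity structure of (\ref{eq:okubo}): by (A3) the matrix $(zI_N-T)^{-1}$ has principal part $\sum_{j=0}^{m_k-1}(z-z_{k,0})^{-j-1}\Lambda_k^j$ on the $k$-th Jordan block, so the Okubo connection has an irregular singularity of Poincar\'e rank $m_k-1$ at each $z_{k,0}$ with $m_k\geq 2$, and a non-resonant regular singularity at $z=\infty$ with residue $B_{\infty}$ by (A2). The relevant JMU deformation parameters are the pole positions $z_{k,0}$ together with the irregular times at each $z_{k,0}$, which by (A4) collectively form a coordinate system on $W$. Next, I would compute the local JMU deformation 1-form $\Theta$ at each singularity from a Birkhoff-type formal fundamental solution: in the cyclic-vector frame from Lemma~\ref{lem:lemma2.2}, the irregular part of the formal solution at $z_{k,0}$ is captured by the Toeplitz block $Z_k$, so that, after conjugating back to the original frame, the local contribution to the deformation connection is exactly the principal part at $z_{k,0}$ of $-(zI_N-T)^{-1}\tilde{\Omega} B_{\infty}$ with $\tilde{\Omega}$ block-diagonal in the sense of (\ref{eq:taikakuBi}). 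Summing over all singularities reproduces the Pfaffian system (\ref{eq:okubopfaff}), and the JMU total integrability condition translates verbatim into (\ref{eq:commute})-(\ref{eq:ci1}).

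For part (ii), I would combine (i) with the JMU existence-uniqueness theorem. The Okubo system on $\mathbb{P}^1\times\{p_0\}$ has well-defined generalized monodromy data (monodromy, Stokes matrices at each irregular $z_{k,0}$, and the connection matrix at $\infty$), and the associated monodromy-preserving deformation equations form a completely integrable Pfaffian system whose parameter space is identified near $p_0$ with $W$ through the coordinates $(z_{k,l})$. The Frobenius theorem then yields a unique local holomorphic solution $T(p)$, while $\tilde{\Omega}(p)$ is read off from (\ref{eq:taikakuBi}); existence and uniqueness of the extended generalized Okubo system follow.

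The main technical obstacle is the precise identification at the irregular singularities: one has to verify that the JMU irregular times at $z_{k,0}$ agree on the nose with the entries $z_{k,1},\dots,z_{k,m_k-1}$ of the Toeplitz block $Z_k$, rather than with some invertible polynomial reparametrization of them, so that the Pfaffian system assembled from the local $\Theta$'s is genuinely of the form (\ref{eq:okubopfaff}). Concretely this reduces to checking that the Okubo factor $(zI_N-T)^{-1}$ is precisely what converts the entries of $T$ into the natural irregular times of the JMU formalism; assumption (A2) is also essential, since it rules out resonances at $\infty$ that would otherwise obstruct the existence of a normalized formal fundamental solution there.
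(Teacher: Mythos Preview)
Your approach is correct in outline and your part (ii) matches the paper's, but your proof of (i) takes a different and more laborious route than the paper's. You propose to compute the JMU deformation $1$-form $\Omega(z)$ locally at each singularity $z_{k,0}$ via formal fundamental solutions, then sum the principal parts and match the result with $-(zI_N-T)^{-1}\tilde{\Omega}B_{\infty}$. The paper instead argues by a uniqueness characterization: the JMU form $\Omega(z)$ is the unique matrix $1$-form such that (a) $\prod_k(z-z_{k,0})^{m_k}\Omega(z)$ is polynomial in $z$, (b) $\Omega(z)\to 0$ as $z\to\infty$, and (c) the integrability condition holds between $dY=\Omega(z)Y$ and the linear ODE (\ref{eq:ordgenOkuboA}). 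One then simply checks that $-(zI_N-T)^{-1}\tilde{\Omega}B_{\infty}$ satisfies (a)--(c). This completely sidesteps the ``technical obstacle'' you flag, namely the on-the-nose identification of the JMU irregular times with the Toeplitz entries $z_{k,1},\dots,z_{k,m_k-1}$: no explicit formal solution or parameter matching is needed.

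One point neither you nor the paper fully confronts: the leading coefficient of the connection at $z_{k,0}$ is nilpotent (it comes from $\Lambda_k^{m_k-1}$), so the singularity is of ramified type and the standard JMU hypotheses (diagonalizable leading term with distinct eigenvalues) fail. The paper acknowledges this in a parenthetical remark and asserts that the arguments nevertheless go through; your proposal implicitly assumes a Birkhoff-type formal solution exists in the usual shape, which also requires justification in the ramified case. In either approach this is where the real analytic work hides.
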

 
 \begin{proof}
   The assertion (i) is proved in Appendix \ref{app:isomonodormicdef}.
   The assertion (ii) follows from the existence and uniqueness of solutions for initial conditions 
   of the isomonodromic deformation shown in \cite{JMU}.
 \end{proof}
 
%
%

An extended generalized Okubo system naturally induces a {\it Saito bundle} as follows.
Let us recall the definition of Saito bundle following \cite{DH,Sab}:

\begin{definition}
  Let $M$ be a complex manifold and $\pi : V\to M$ be a vector bundle on $M$.
  A {\it Saito bundle} is a $5$-tuple $(V, \nabla^V, \Phi^V, R_0^V, R_{\infty}^V)$
  consisting of a flat connection $\nabla^V$ on $V$, a $1$-form $\Phi^V\in\Omega^1(M,\mbox{End}(V))$
  and two endomorphisms $R_0^V,R_{\infty}^V\in\mbox{End}(V)$, such that the following conditions are satisfied:
  \begin{gather}
    \Phi^V\wedge\Phi^V=0,\ \ \ \ [R_0^V,\Phi^V]=0, \\
    d^{\nabla^V}\Phi^V=0,\ \ \ \ \nabla^V R_0^V+\Phi^V =[\Phi^V,R_{\infty}^V], \ \ \ \ \nabla^V R_{\infty}^V=0,
  \end{gather}
  where the $\mbox{End}(V)$-valued forms $[R,\Phi^V]$
  (with $R:=R_0^V$ or $R_{\infty}^V$), $d^{\nabla^V}\Phi^V$ and $\Phi^V\wedge\Phi^V$ are defined by:
  for any $X,Y\in \Gamma(M,\Theta_M)$,
  where $\Theta_M$ denotes the sheaf of vector fields on $M$,
  \begin{align*}
    &(\Phi^V\wedge\Phi^V)_{X,Y}:=\Phi_X^V\Phi_Y^V-\Phi_Y^V\Phi_X^V,\ \ \ \ [R,\Phi^V]_X:=[R,\Phi_X^V], \\
    &(d^{\nabla^V}\Phi)_{X,Y}:=\nabla_X^V(\Phi_Y^V)-\nabla^V_{Y}(\Phi_X^V)-\Phi_{[X,Y]}^V.
  \end{align*}
 \end{definition}

Returning to our setting, let $\mathbb{C}^N(U)$ be a trivial bundle of rank $N$ on a domain $U$ 
and $({\bf e}_1,\dots,{\bf e}_N)^t$ be a basis of sections of $\mathbb{C}^N(U)$.
Let $\nabla^{(0)}$ be the trivial connection on $\mathbb{C}^N(U)$ 
defined by $\nabla^{(0)}({\bf e}_i)=0$ $(i=1,\dots,N)$.
Then, for an extended generalized Okubo system (\ref{eq:okubopfaff}), 
Lemma~\ref{lem:lemma2.2} implies that the $5$-tuple $(\mathbb{C}^N(U),\nabla^{(0)},\tilde{\Omega},T,-B_{\infty})$
defines a Saito bundle on $U$.

\section{Saito structure (without metric)} \label{sec:Saito structure}

\subsection{Review on Saito structure (without metric)} \label{subsec:Saito}
In this section, we review Saito structure (without metric) introduced by C. Sabbah \cite{Sab}.
Proofs of many of statements in this section are found in the literature (\cite{Sab,KMS,KM}).
In the sequel, we abbreviate Saito structure (without metric) to Saito structure for brevity.

\begin{definition}[C. Sabbah \cite{Sab}] \label{def:Saito}
 Let $M$ be a complex manifold of dimension $N$,
 $TM$ be its tangent bundle,
 and $\Theta_M$ be the sheaf of holomorphic sections of $TM$.
 A {\it Saito structure} on $M$ is a data consisting of $(\knabla, \Phi, e, E)$ in (i)-(iii) 
 satisfying the conditions (a), (b) below:
 \begin{description}
  \item[(i)] $\knabla$ is a flat torsion-free connection on $TM$,
  \item[(ii)] $\Phi$ is a symmetric Higgs field on $TM$,
  \item[(iii)] $e$ and $E$ are global sections (vector fields) of $TM$,
  respectively called {\it unit field} and {\it Euler field}.
 \end{description}
 \begin{description}
   \item[(a)] A meromorphic connection $\nabla$ on the bundle $\pi^{*}TM$ on $\mathbb{P}^1\times M$ defined by
   \begin{equation} \label{def:nabla}
     \nabla=\pi^{*}\knabla+\frac{\pi^*\Phi}{z}-\left(\frac{\Phi(E)}{z}+\knabla E\right)\frac{dz}{z}
   \end{equation}
   is integrable, where $\pi$ is the projection $\pi :\mathbb{P}^1\times M\rightarrow M$ 
   and $z$ is a non-homogeneous coordinate of $\mathbb{P}^1$,
   \item[(b)] the field $e$ is $\knabla$-horizontal (i.e., $\knabla (e)=0$) and satisfies $\Phi_e={\rm Id}$,
   where we regard $\Phi$ as an $\mbox{End}_{\mathcal{O}_M}(\Theta_M)$-valued $1$-form
   and $\Phi_e\in \mbox{End}_{\mathcal{O}_M}(\Theta_M)$ denotes the contraction of the vector field $e$ and the $1$-form $\Phi$. 
 \end{description} 
\end{definition}


\begin{remark}
 To the Higgs field $\Phi$ there  associates a product $\star$ on $\Theta_M$ defined by 
$X \star Y :=\Phi_{X}(Y)$ for  $X,Y\in \Theta_M$.
 The Higgs field $\Phi$ is said to be symmetric if the product $\star$ is commutative and associative.
 The condition $\Phi_e={\rm Id}$ in Definition \ref{def:Saito}
(b) implies that the field $e$ is the unit of the product $\star$.
 So we can introduce on $\Theta_M$ the structure of associative and commutative $\mathcal{O}_M$-algebra with a unit.
\end{remark}

Since the connection $\knabla$ is flat and torsion free, we can take a {\it flat coordinate system} $(t_1,\dots,t_N)$ 
such that $\knabla(\partial _{t_i})=0$, $1\leq i\leq N$,
(at least) on a simply-connected open set $U$ of $M$.
We take a flat coordinate system $(t_1,\dots,t_N)$ on $U$
and assume the following conditions:

 (C1) $e=\partial_{t_N}$,
 
 (C2) $E=w_1t_1\partial _{t_1}+\cdots+w_Nt_N\partial _{t_N}$ for $w_i\in\mathbb{C}\>(i=1,\dots,N)$,
 
 (C3) $w_N=1$ and $w_i-w_j\not\in\mathbb{Z}\setminus\{0\}$ for $i\neq j$.
\vspace{2mm}
\\
In this paper, a function $f\in\mathcal{O}_M(U)$ is said to be weighted homogeneous with a weight $w(f)\in \mathbb{C}$
if $f$ is an eigenfunction of the Euler operator: $Ef=w(f)f$.
In particular, the flat coordinates $t_i$, $1\leq i \leq N$ are weighted homogeneous with $w(t_i)=w_i$.

We fix a basis $\{\partial _{t_1},\dots,\partial _{t_N}\}$ of $\Theta_M(U)$
and introduce the following matrices:

 (i) $\tilde{\Phi}=(\tilde{\Phi}_{ij})$ is the representation matrix of $\Phi$, 
 namely the $(i,j)$-entry $\tilde{\Phi}_{ij}$ is a $1$-form defined by
 \begin{equation}\label{eq:deftildeB}
  \Phi(\partial _{t_i})=\sum_{j=1}^N\tilde{\Phi}_{ij}\partial _{t_j},
\quad 1\leq i\leq N,
 \end{equation}
 
 (ii) $\mathcal{T}=(\mathcal{T}_{ij})$ and $\mathcal{B}_{\infty}=\bigl((\mathcal{B}_{\infty})_{ij}\bigr)$ are the representation matrices of $-\Phi(E)$ and $\knabla E$ respectively,
 namely
 \begin{equation}
  -\Phi_{\partial_{t_i}}(E)=\sum_{j=1}^N\mathcal{T}_{ij}\partial_{t_j},\ \ \ \ 
  \knabla_{\partial_{t_i}}(E)=\sum_{j=1}^N(\mathcal{B}_{\infty})_{ij}\partial_{t_j}.
 \end{equation}

\begin{lemma} \label{lem:Btow}
$\mathcal{B}_{\infty}=\mbox{diag}\,(w_1,\dots,w_N)$.
\end{lemma}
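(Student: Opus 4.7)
The plan is to compute $\knabla_{\partial_{t_i}}(E)$ directly in the flat frame and read off the entries of $\mathcal{B}_\infty$. Since $(t_1,\ldots,t_N)$ is a flat coordinate system, the basis vectors satisfy $\knabla(\partial_{t_j}) = 0$ for every $j$, so $\knabla$ acts on sections expanded in this frame simply as differentiation of the component functions.

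Using the explicit form of the Euler field from condition (C2), namely $E = \sum_{j=1}^N w_j t_j \partial_{t_j}$, the Leibniz rule for $\knabla$ yields
\begin{equation*}
  \knabla_{\partial_{t_i}}(E) = \sum_{j=1}^N \partial_{t_i}(w_j t_j)\,\partial_{t_j} + \sum_{j=1}^N w_j t_j \knabla_{\partial_{t_i}}(\partial_{t_j}) = \sum_{j=1}^N w_j \delta_{ij}\,\partial_{t_j} = w_i\,\partial_{t_i},
\end{equation*}
where the second sum vanishes by flatness of the $\partial_{t_j}$. Comparing this with the defining identity $\knabla_{\partial_{t_i}}(E) = \sum_j (\mathcal{B}_\infty)_{ij}\partial_{t_j}$ immediately gives $(\mathcal{B}_\infty)_{ij} = w_i \delta_{ij}$, which is the claim.

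There is essentially no obstacle here: the statement reduces to a one-line computation once one unpacks the definitions of flat coordinates, the Euler field, and the representation matrix. The only thing to be careful about is the index convention in (\ref{eq:deftildeB})-style definitions (whether one writes $\knabla_{\partial_{t_i}}(E) = \sum_j (\mathcal{B}_\infty)_{ij}\partial_{t_j}$ or its transpose), but with the convention fixed by the lemma's setup the matrix comes out diagonal with the advertised entries.
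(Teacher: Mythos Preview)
Your proof is correct and is exactly the standard computation one would make here; the paper itself omits the proof (referring the reader to the literature), so there is nothing further to compare.
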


%
In the following, we assume that  $-\Phi(E)$ (or equivalently $\mathcal{T}$) is generically regular on $U$ (cf. (A3) in Section~\ref{sec:okuboiso}).

\begin{remark}
  By the assumption that $-\Phi(E)$ is generically regular, we can take a coordinate system $(-z_{k,0},\dots,-z_{k,m_k-1})_{1\leq k\leq n}$
  as (A4) in Section~\ref{sec:okuboiso}, which is called a {\it canonical coordinate system}.
  The canonical coordinate system for regular $F$-manifolds are constructed by L. David and C. Hertling \cite{DH}.
  The product $\star$ is written in a simple form with respect to the canonical coordinate system.
  Indeed the product is represented with respect to $(-\partial_{z_{k,0}},\dots,-\partial_{z_{k,m_k-1}})_{1\leq k\leq n}$ by
  \[
    (-\partial_{z_{k,l}})\star (-\partial_{z_{p,q}})=\left\{\begin{array}{l} -\delta_{k,p}\partial_{z_{k,l+q}},\ \ 0\leq l+q\leq m_k-1, \\
      0, \ \ \ \ \ \ \ \ \ \ \ \ \ \ \ \ l+q\geq m_k.
      \end{array}\right.
  \]
\end{remark}

\begin{lemma} \label{lem:trivial}
 The meromorphic connection $\nabla$ in $(\mbox{{\rm a}})$ is integrable if and only if $\mathcal{T}$, $\tilde{\Phi}$
 and $\mathcal{B}_{\infty}$ are subject to the following system of equations
 \begin{gather} 
      \bigl[\mathcal{T},\tilde{\Phi}\bigr]=O, \quad
      \tilde{\Phi}\wedge\tilde{\Phi}=O, \label{eq:saitorel1} \\
      dT+\tilde{\Phi}+[\tilde{\Phi},\mathcal{B}_{\infty}]=O, \quad d\tilde{\Phi}=O. \label{eq:saitorel2}
 \end{gather}
\end{lemma}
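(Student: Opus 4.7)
My plan is to translate the integrability $\nabla^{2}=0$ directly into matrix equations in the flat frame $\{\partial_{t_1},\dots,\partial_{t_N}\}$ of $\pi^{*}TM$ and then match coefficients of powers of $z$ and of $dz$. By the very definitions of $\tilde{\Phi},\mathcal{T},\mathcal{B}_{\infty}$ and the fact that $\pi^{*}\knabla$ is represented by $d$ in a flat frame, the meromorphic connection (\ref{def:nabla}) takes the shape $\nabla=d+A$ with
\[
A\;=\;\frac{\tilde{\Phi}}{z}\;+\;\Bigl(\frac{\mathcal{T}}{z^{2}}-\frac{\mathcal{B}_{\infty}}{z}\Bigr)\,dz,
\]
where I have already used Lemma~\ref{lem:Btow} to note that $\mathcal{B}_{\infty}=\mathrm{diag}(w_1,\dots,w_N)$ is constant.

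I would next compute the curvature $R=dA-A\wedge A$ as a matrix of $2$-forms on $\mathbb{P}^{1}\times M$. Since $\tilde{\Phi}$ and $\mathcal{T}$ are pulled back from $M$ (hence $z$-independent), and $\mathcal{B}_{\infty}$ is constant, a few applications of the Leibniz rule give
\[
dA\;=\;\frac{d\tilde{\Phi}}{z}\;+\;\frac{(\tilde{\Phi}+d\mathcal{T})\wedge dz}{z^{2}},\qquad
A\wedge A\;=\;\frac{\tilde{\Phi}\wedge\tilde{\Phi}}{z^{2}}\;+\;\frac{[\tilde{\Phi},\mathcal{T}]\wedge dz}{z^{3}}\;-\;\frac{[\tilde{\Phi},\mathcal{B}_{\infty}]\wedge dz}{z^{2}},
\]
so that
\[
R\;=\;\frac{d\tilde{\Phi}}{z}\;-\;\frac{\tilde{\Phi}\wedge\tilde{\Phi}}{z^{2}}\;+\;\frac{\bigl(d\mathcal{T}+\tilde{\Phi}+[\tilde{\Phi},\mathcal{B}_{\infty}]\bigr)\wedge dz}{z^{2}}\;-\;\frac{[\tilde{\Phi},\mathcal{T}]\wedge dz}{z^{3}}.
\]
Since pure $2$-forms on $M$ and $2$-forms of type $(\text{$1$-form on }M)\wedge dz$ lie in complementary subsheaves of $\Omega^{2}_{\mathbb{P}^{1}\times M}$, and since $z^{-1},z^{-2},z^{-3}$ are $\mathbb{C}$-linearly independent, the identity $R=0$ is equivalent to the simultaneous vanishing of the four Laurent coefficients, which are precisely the four relations (\ref{eq:saitorel1})--(\ref{eq:saitorel2}). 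Both implications of the lemma drop out from this chain of equivalences.

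The only real obstacle is a careful sign audit. One needs (i) the curvature formula $R=dA-A\wedge A$, with the minus sign forced by the convention $\mathrm{op}(\partial_{t_i})=\sum_j M_{ij}\partial_{t_j}$ used in the definitions of $\tilde{\Phi},\mathcal{T},\mathcal{B}_{\infty}$, and (ii) the built-in minus sign in the definition $-\Phi_{\partial_{t_i}}(E)=\sum_j\mathcal{T}_{ij}\partial_{t_j}$, which is exactly what converts the $-[\tilde{\Phi},\mathcal{B}_{\infty}]$ produced by $A\wedge A$ into the $+[\tilde{\Phi},\mathcal{B}_{\infty}]$ displayed in (\ref{eq:saitorel2}). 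Once these sign conventions are settled the computation is a routine expansion, strongly parallel to Lemma~\ref{lem:lemma2.2} but applied to a connection with poles only at $z=0,\infty$ rather than along the spectrum of $T$.
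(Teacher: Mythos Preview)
Your proof is correct and is precisely the computation the paper has in mind: the paper does not spell out a proof of this lemma, referring instead to the literature, but Remark~\ref{rem:BirkhoffOkubo} makes explicit that in the flat frame the connection $\nabla$ becomes the Birkhoff-type Pfaffian system~(\ref{eq:Birkhoff}), whose integrability condition is exactly (\ref{eq:saitorel1})--(\ref{eq:saitorel2}). Your expansion of $R=dA-A\wedge A$ and the matching of Laurent coefficients is the standard way to carry this out, and your sign audit is accurate.
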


Lemmas~\ref{lem:Btow} and~\ref{lem:trivial} imply that a Saito structure induces an extended generalized Okubo system 
and thus a Saito bundle $(\mathbb{C}^N(U),\knabla,\tilde{\Phi},\mathcal{T},-\mathcal{B}_{\infty})$
by taking a flat coordinate system on $U$.
 
 \begin{remark} \label{rem:BirkhoffOkubo}
   The meromorphic connection (\ref{def:nabla}) is written as the following Pfaffian system with respect to the flat coordinate system:
   \begin{equation} \label{eq:Birkhoff}
     d\mathcal{Y}=\left(\Bigl(-\frac{\mathcal{T}}{z}+\mathcal{B}_{\infty}\Bigr)\frac{dz}{z}-\frac{1}{z}\tilde{\Phi}\right)\mathcal{Y}.
   \end{equation}
   The system of equations (\ref{eq:saitorel1}),(\ref{eq:saitorel2}) is equivalent to the integrability condition of (\ref{eq:Birkhoff}).
   The system of ordinary linear differential equations 
   \begin{equation*}
     \frac{d\mathcal{Y}}{dz}=\left(-\frac{\mathcal{T}}{z^2}+\frac{\mathcal{B}_{\infty}}{z}\right)\mathcal{Y}
   \end{equation*}
   has an irregular singular point of Poincar\'e rank $1$ at $z=0$ and a regular singular point at $z=\infty$,
   which is called a {\it Birkhoff normal form}.
   A Birkhoff normal form can be transformed into a generalized Okubo system using a Fourier-Laplace transformation.
 \end{remark}

\begin{lemma} \label{lem:Cishomo}
 There is a unique matrix $\mathcal{C}$ whose entries are in $\mathcal{O}_M(U)$ such that 
 \begin{equation*}
  \mathcal{T}=-E\mathcal{C}, \ \ \  
  \tilde{\Phi}=d\mathcal{C}
 \end{equation*} 
 and that each matrix entry $\mathcal{C}_{ij}$ of $\mathcal{C}$ is weighted homogeneous with $w(\mathcal{C}_{ij})=1-w_i+w_j$. 
\end{lemma}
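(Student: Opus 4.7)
The plan is to write $\mathcal{C}$ down explicitly in terms of $\mathcal{T}$ and the weights $w_1,\dots,w_N$, and then verify the required properties directly. From Lemmas~\ref{lem:trivial} and~\ref{lem:Btow}, the matrix equation $d\mathcal{T}+\tilde{\Phi}+[\tilde{\Phi},\mathcal{B}_{\infty}]=O$ reads entrywise as
\[
  d\mathcal{T}_{ij}=-(1+w_j-w_i)\,\tilde{\Phi}_{ij},
\]
since $\mathcal{B}_{\infty}=\mathrm{diag}(w_1,\dots,w_N)$. By condition (C3) the scalar $1-w_i+w_j$ is nonzero for every pair $(i,j)$: it equals $1$ when $i=j$, and for $i\ne j$ the equality $w_i-w_j=1$ is forbidden by $w_i-w_j\notin\mathbb{Z}\setminus\{0\}$. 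This justifies the definition
\[
  \mathcal{C}_{ij}:=-\frac{\mathcal{T}_{ij}}{1-w_i+w_j},
\]
from which $d\mathcal{C}=\tilde{\Phi}$ follows immediately.

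Next, to obtain $\mathcal{T}=-E\mathcal{C}$ I would invoke the commutativity $\Phi_X(Y)=\Phi_Y(X)$ of the associated product $\star$: expanding
\[
  -\Phi_{\partial_{t_i}}(E)=-\Phi_E(\partial_{t_i})=-\sum_{j}(\iota_E\tilde{\Phi}_{ij})\,\partial_{t_j}
\]
and comparing with the defining relation of $\mathcal{T}$ yields the key identity $\mathcal{T}_{ij}=-\iota_E\tilde{\Phi}_{ij}$. Applying $\iota_E$ to the entrywise relation for $d\mathcal{T}_{ij}$ displayed above then gives
\[
  E\mathcal{T}_{ij}=\iota_E(d\mathcal{T}_{ij})=-(1+w_j-w_i)\,\iota_E\tilde{\Phi}_{ij}=(1-w_i+w_j)\,\mathcal{T}_{ij},
\]
so $E\mathcal{C}_{ij}=-\mathcal{T}_{ij}$, which is $\mathcal{T}=-E\mathcal{C}$. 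The weighted homogeneity $E\mathcal{C}_{ij}=(1-w_i+w_j)\mathcal{C}_{ij}$ then follows in one line from the definition of $\mathcal{C}_{ij}$.

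For uniqueness, if $\mathcal{C}'$ is a second matrix satisfying all three conditions, then $d(\mathcal{C}-\mathcal{C}')=\tilde{\Phi}-\tilde{\Phi}=0$, so $K:=\mathcal{C}-\mathcal{C}'$ is a constant matrix. Since $EK=0$ while also $EK_{ij}=(1-w_i+w_j)K_{ij}$ by weighted homogeneity, the nonvanishing of $1-w_i+w_j$ forces $K=0$.

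No single step is genuinely hard; all the ingredients are already present in the preceding lemmas. The one conceptual point worth emphasizing is that condition (C3) plays a double role: it justifies dividing by $1-w_i+w_j$ in the construction, and simultaneously rules out nontrivial constants of integration in the uniqueness argument. In other words, (C3) prevents a resonance between the prescribed weight $1-w_i+w_j$ of $\mathcal{C}_{ij}$ and the kernel of $E$ on constant functions.
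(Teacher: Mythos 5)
Your proof is correct, and it follows the natural route: the explicit formula $\mathcal{C}_{ij}=-(1+w_j-w_i)^{-1}\mathcal{T}_{ij}$ is exactly the one the paper itself uses implicitly (compare Lemma~\ref{lem:flatC=t}, where $1-w_N+w_j=w_j$, and the flat coordinates in Theorem~\ref{saitojacobian}), and the key identity $\mathcal{T}_{ij}=-\iota_E\tilde{\Phi}_{ij}$ from the symmetry of $\Phi$ together with non-resonance (C3) is precisely what makes both the construction and the uniqueness work. The paper defers this proof to the literature, so there is nothing further to compare; your argument is complete as written.
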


\begin{lemma} \label{lem:flatC=t}
 Let $(t_1,\dots,t_N)$ be a flat coordinate system of a Saito structure.
 Then it holds that $t_j=\mathcal{C}_{Nj}=-w_j^{-1}\mathcal{T}_{Nj}$, $1\leq j\leq N.$
\end{lemma}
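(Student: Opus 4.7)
The plan is to deduce both equalities directly by exploiting (i) the decomposition of $\mathcal{T}$ and $\tilde{\Phi}$ via the potential $\mathcal{C}$ from Lemma~\ref{lem:Cishomo}, and (ii) the defining property of the unit field $e=\partial_{t_N}$ with respect to the product $\star$ associated to $\Phi$. The result is really an algebraic identification, not a differential one, so no hard step is involved.

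\textbf{Step 1 (the second equality).} By Lemma~\ref{lem:Cishomo}, $\mathcal{T}=-E\mathcal{C}$ and the entry $\mathcal{C}_{Nj}$ is weighted homogeneous of weight $1-w_N+w_j=w_j$ (using condition (C3), $w_N=1$). Applying the Euler operator entrywise,
\[
  \mathcal{T}_{Nj}=-E\mathcal{C}_{Nj}=-w_j\,\mathcal{C}_{Nj},
\]
so $\mathcal{C}_{Nj}=-w_j^{-1}\mathcal{T}_{Nj}$, provided $w_j\neq 0$. The non-vanishing $w_j\neq 0$ is forced by (C3): if $w_j=0$ for some $j\neq N$, then $w_N-w_j=1\in\mathbb{Z}\setminus\{0\}$, contradicting (C3); and $w_N=1\neq 0$.

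\textbf{Step 2 (identifying $\mathcal{T}_{Nj}$ with $-w_j t_j$).} Since $e=\partial_{t_N}$ is the unit of $\star$ (this is the content of $\Phi_e=\mathrm{Id}$ from Definition~\ref{def:Saito}(b)), one has
\[
  -\Phi_{\partial_{t_N}}(E)=-e\star E=-E=-\sum_{j=1}^{N}w_j t_j\,\partial_{t_j}.
\]
Comparing this with the definition $-\Phi_{\partial_{t_i}}(E)=\sum_{j}\mathcal{T}_{ij}\partial_{t_j}$ specialized to $i=N$ gives
\[
  \mathcal{T}_{Nj}=-w_j t_j,\qquad 1\leq j\leq N.
\]
This already proves $t_j=-w_j^{-1}\mathcal{T}_{Nj}$.

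\textbf{Step 3 (concluding the first equality).} Combining Step 1 and Step 2 yields $-w_j\mathcal{C}_{Nj}=-w_jt_j$, whence $\mathcal{C}_{Nj}=t_j$ since $w_j\neq 0$.

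The only subtle point—hardly an obstacle—is making sure that (C3) together with $w_N=1$ indeed rules out $w_j=0$; everything else is a one-line reading-off of coefficients from the unit-field identity $e\star E=E$ and the homogeneity of $\mathcal{C}$.
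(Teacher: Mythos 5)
Your proof is correct. It differs from the paper's in a small but genuine way: the paper reads off the identity $\partial_{t_i}\star e=\partial_{t_i}$ at the level of the Higgs field, concluding $\tilde{\Phi}_{Nj}=dt_j$ and hence $\mathcal{C}_{Nj}=t_j$ by integrating $\tilde\Phi=d\mathcal{C}$ (with the integration constant killed, implicitly, by the homogeneity of $\mathcal{C}_{Nj}$), and only then obtains $\mathcal{T}_{Nj}=-w_j\mathcal{C}_{Nj}$ from Lemma~\ref{lem:Cishomo}. You instead apply the unit-field axiom directly to the Euler field, $\Phi_{\partial_{t_N}}(E)=e\star E=E$, which yields $\mathcal{T}_{Nj}=-w_jt_j$ in one line with no integration and no constant to dispose of, and then use the homogeneity $\mathcal{T}_{Nj}=-w_j\mathcal{C}_{Nj}$ to finish. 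Both arguments rest on exactly the same two inputs (the unit-field condition $\Phi_e=\mathrm{Id}$ and Lemma~\ref{lem:Cishomo}); yours is marginally cleaner because it avoids the antiderivative step, and you are right to make explicit the point the paper leaves silent, namely that (C2)--(C3) force $w_j\neq 0$ for every $j$, which is needed to divide by $w_j$ in both proofs.
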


\begin{proof}
 Since
 \[
    \partial_{t_i}=\partial_{t_i}\star e=\sum_{j=1}^N(\tilde{\Phi}_{\partial_{t_i}})_{Nj}\partial_{t_j},
 \]
 we have $\tilde{\Phi}_{Nj}=\sum_{i=1}^Ndt_i$,
 which implies $\mathcal{C}_{Nj}=t_j$.
\end{proof}

\begin{proposition}[Konishi-Minabe \cite{KM}] \label{prop:potential vector field}
  There is a unique $N$-tuple of holomorphic functions $\vec{g}=(g_1,\dots,g_N)\in \mathcal{O}_M^N(U) $
  such that $\mathcal{C}_{ij}=\frac{\partial g_j}{\partial t_i}$,
 and that $g_j$ is weighted homogeneous with $w(g_j)=1+w_j$.
  The vector $\vec{g}$ (or precisely the vector field $\mathcal{G}=\sum_{i=1}^Ng_i\partial_{t_i}$)
is called a potential vector field.
\end{proposition}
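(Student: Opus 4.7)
The plan is to construct $g_j$ for each fixed $j$ by integrating a suitable closed $1$-form on the simply connected domain $U$, and then to use weighted homogeneity to pin down the constant of integration, which will simultaneously give existence, the homogeneity assertion, and uniqueness.

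First I would extract the key identity from the symmetry of the Higgs field: since $\Phi$ is a symmetric Higgs field, the product $\star$ is commutative, so $\partial_{t_k}\star\partial_{t_i} = \partial_{t_i}\star\partial_{t_k}$. Expanding both sides using $\Phi(\partial_{t_i}) = \sum_j \tilde{\Phi}_{ij}\partial_{t_j}$ and comparing coefficients of $\partial_{t_j}$ yields $\tilde{\Phi}_{ij}(\partial_{t_k}) = \tilde{\Phi}_{kj}(\partial_{t_i})$. Now invoking $\tilde{\Phi} = d\mathcal{C}$ from Lemma~\ref{lem:Cishomo}, this becomes
\[
   \partial_{t_k}\mathcal{C}_{ij} = \partial_{t_i}\mathcal{C}_{kj}
\]
for all $i,j,k$. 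For each fixed $j$, this is exactly the closedness of the $1$-form $\omega_j := \sum_i \mathcal{C}_{ij}\,dt_i$. Since $U$ is simply connected, Poincar\'e's lemma provides a holomorphic function $g_j \in \mathcal{O}_M(U)$, unique up to an additive constant, with $\partial_{t_i}g_j = \mathcal{C}_{ij}$.

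Next I would establish weighted homogeneity and thereby fix the ambiguous constant. Using the symmetry $\partial_{t_i}\mathcal{C}_{kj} = \partial_{t_k}\mathcal{C}_{ij}$ and the homogeneity $E\mathcal{C}_{ij} = (1-w_i+w_j)\mathcal{C}_{ij}$ from Lemma~\ref{lem:Cishomo}, a direct computation shows
\[
   \partial_{t_i}\bigl(E g_j\bigr)
     = w_i\mathcal{C}_{ij} + \sum_k w_k t_k\,\partial_{t_k}\mathcal{C}_{ij}
     = w_i\mathcal{C}_{ij} + E\mathcal{C}_{ij}
     = (1+w_j)\,\partial_{t_i}g_j
\]
for every $i$. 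Hence $E g_j - (1+w_j)g_j$ is a locally constant function $c_j$ on $U$. The assumption (C3), combined with $w_N = 1$, forces $w_j \neq -1$ for all $j$ (otherwise $w_N - w_j = 2 \in \mathbb{Z}\setminus\{0\}$ would violate (C3)); therefore $1 + w_j \neq 0$, and one can shift $g_j$ by the unique constant $c_j/(1+w_j)$ to absorb $c_j$. The resulting $g_j$ then satisfies $E g_j = (1+w_j)g_j$, which is the asserted weighted homogeneity.

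For uniqueness of the $N$-tuple, note that the gradient condition $\mathcal{C}_{ij} = \partial_{t_i}g_j$ determines each $g_j$ up to an additive constant, and the weight condition $E g_j = (1+w_j)g_j$ with $1+w_j \neq 0$ admits no nonzero constant solution, so the constant is forced to be zero. The argument is largely formal once the symmetry $\partial_{t_k}\mathcal{C}_{ij} = \partial_{t_i}\mathcal{C}_{kj}$ is in hand; the only conceptual point requiring care is the non-vanishing of $1+w_j$, which is where the resonance-free weight condition (C3) plays its role and which I would flag as the most easily overlooked step.
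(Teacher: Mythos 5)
Your argument is correct and complete: the commutativity of $\star$ gives the closedness of $\omega_j=\sum_i\mathcal{C}_{ij}\,dt_i$, the Poincar\'e lemma on the simply connected $U$ gives $g_j$ up to a constant, and the homogeneity computation together with $1+w_j\neq 0$ (correctly extracted from (C3)) fixes that constant and yields uniqueness. The paper itself defers the proof to Konishi--Minabe and only records, in Remark~\ref{rem:pvfC}, the explicit formula $g_j=\frac{1}{1+w_j}\sum_i w_it_i\mathcal{C}_{ij}$, which is exactly what your construction produces upon applying Euler's relation $g_j=\frac{1}{1+w_j}Eg_j$; so your write-up supplies the argument the paper omits, in the expected way.
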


\begin{remark} \label{rem:pvfC}
  It is readily found that the potential vector field $\vec{g}=(g_1,\dots,g_N)$ is explicitly given by
  \begin{equation} \label{eq:pvfC}
    g_j=\frac{1}{1+w_j}\sum_{i=1}^Nw_it_i\mathcal{C}_{ij},\ \ \ 1\leq j\leq N.
  \end{equation}
\end{remark}


\begin{proposition} \label{prop:genWDVV}
 The potential vector field $\vec{g}=(g_1,\dots,g_N)$ is a solution to the following system of nonlinear differential equations:
 \begin{gather}
  \sum_{m=1}^N\frac{\partial^2 g_m}{\partial t_k\partial t_i}\frac{\partial^2 g_j}{\partial t_l\partial t_m}=
  \sum_{m=1}^N\frac{\partial^2 g_m}{\partial t_l\partial t_i}\frac{\partial^2 g_j}{\partial t_k\partial t_m},
\ \ i,j,k,l=1,\dots,N, \label{eq:genWDVV} \\
  \frac{\partial ^2 g_j}{\partial t_N\partial t_i}=\delta_{ij},\ \ \ \ i,j=1,\dots,N, \label{eq:vpunit} \\
  Eg_j=\sum_{k=1}^Nw_kt_k\frac{\partial g_j}{\partial t_k}=(1+w_j)g_j,\ \ \ j=1,\dots,N. \label{eq:vphomo}
 \end{gather}
\end{proposition}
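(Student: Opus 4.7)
The plan is to verify the three displayed systems of equations one at a time, each by translating the structural data of the Saito structure into the flat coordinate system $(t_1,\dots,t_N)$ and then invoking the formulas $\mathcal{C}_{ij}=\partial g_j/\partial t_i$ and $\tilde{\Phi}=d\mathcal{C}$ from Lemma~\ref{lem:Cishomo} and Proposition~\ref{prop:potential vector field}.

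First I would dispose of the two easy equations. For~(\ref{eq:vpunit}), Lemma~\ref{lem:flatC=t} gives $\mathcal{C}_{Nj}=t_j$, which combined with $\mathcal{C}_{Nj}=\partial g_j/\partial t_N$ yields $\partial g_j/\partial t_N=t_j$; differentiating once more in $t_i$ produces $\partial^2 g_j/\partial t_N\partial t_i=\delta_{ij}$. For~(\ref{eq:vphomo}), Proposition~\ref{prop:potential vector field} already asserts that $g_j$ is weighted homogeneous of weight $1+w_j$; expanding the condition $Eg_j=(1+w_j)g_j$ with $E=\sum_k w_k t_k\partial_{t_k}$ (condition~(C2)) gives~(\ref{eq:vphomo}) verbatim.

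The substantive part is equation~(\ref{eq:genWDVV}), which I would extract from the associativity relation $\tilde{\Phi}\wedge\tilde{\Phi}=O$ in~(\ref{eq:saitorel1}). Combining Lemma~\ref{lem:Cishomo} with Proposition~\ref{prop:potential vector field} gives
\begin{equation*}
\tilde{\Phi}_{ij}=d\mathcal{C}_{ij}=\sum_{l=1}^{N}\frac{\partial^{2}g_{j}}{\partial t_{i}\partial t_{l}}\,dt_{l},
\end{equation*}
so under the convention $\Phi(\partial_{t_i})=\sum_{j}\tilde{\Phi}_{ij}\partial_{t_j}$ the structure constants of $\star$ in the flat basis become $c_{il}^{\,j}=\partial^{2}g_{j}/\partial t_{i}\partial t_{l}$. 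Computing
\begin{equation*}
(\tilde{\Phi}\wedge\tilde{\Phi})_{ij}=\sum_{k=1}^{N}\tilde{\Phi}_{ik}\wedge\tilde{\Phi}_{kj}=\sum_{k,l,p}\frac{\partial^{2}g_{k}}{\partial t_{i}\partial t_{l}}\frac{\partial^{2}g_{j}}{\partial t_{k}\partial t_{p}}\,dt_{l}\wedge dt_{p},
\end{equation*}
and setting the coefficient of each $dt_{l}\wedge dt_{p}$ with $l<p$ to zero, I would, after relabelling the summation index, recover exactly~(\ref{eq:genWDVV}).

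The only delicate point I anticipate is the bookkeeping in this last step: one must align the matrix convention of~(\ref{eq:deftildeB}) with the definition $\Phi_X(Y)=X\star Y$, and use the commutativity of $\star$ together with the equality of mixed partial derivatives of $g_j$, to ensure that the identity obtained from $\tilde{\Phi}\wedge\tilde{\Phi}=O$ matches the index pattern displayed in~(\ref{eq:genWDVV}) and not a transposed variant.
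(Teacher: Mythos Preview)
Your proof is correct and is exactly the standard derivation: equation~(\ref{eq:vphomo}) is the definition of weighted homogeneity asserted in Proposition~\ref{prop:potential vector field}; equation~(\ref{eq:vpunit}) follows from $\mathcal{C}_{Nj}=t_j$ (Lemma~\ref{lem:flatC=t}) and $\mathcal{C}_{Nj}=\partial g_j/\partial t_N$; and equation~(\ref{eq:genWDVV}) is the component form of $\tilde{\Phi}\wedge\tilde{\Phi}=O$ after substituting $\tilde{\Phi}_{ij}=\sum_l(\partial^2 g_j/\partial t_i\partial t_l)\,dt_l$. The paper itself does not supply a proof of this proposition---at the start of Section~\ref{subsec:Saito} it refers the reader to \cite{Sab,KMS,KM}---so there is nothing to compare against beyond noting that your argument is precisely the expected one from those references.
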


\begin{definition} \label{def:extWDVV}
The system of non-linear differential equations
$(\ref{eq:genWDVV})$-$(\ref{eq:vphomo})$
for a vector $\vec{g}=(g_1,\ldots,g_N)$ is called the extended WDVV equation.
\end{definition}

\begin{remark}
  The system of differential equations (\ref{eq:genWDVV}) is called ``oriented associativity equations'' in \cite{LoM,Ma}.
\end{remark}

Conversely, starting with a solution to (\ref{eq:genWDVV})-(\ref{eq:vphomo}), 
it is possible to reconstruct a Saito structure.

\begin{proposition} \label{prop:fromvptoSaito}
 Take constants $w_j\in\mathbb{C},$ $1\leq j\leq N$ satisfying $w_i-w_j\not\in\mathbb{Z}$ and $w_N=1$
and
assume that $\vec{g}=(g_1,\dots,g_N)$ is a holomorphic solution to $(\ref{eq:genWDVV})$-$(\ref{eq:vphomo})$
 on a simply-connected domain $U$ in $\mathbb{C}^N$.
 Then there is a Saito structure on $U$ which has $(t_1,\dots,t_N)$ as a flat coordinate system.
\end{proposition}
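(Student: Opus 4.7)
The plan is to invert the constructions of Lemmas~\ref{lem:Cishomo} and~\ref{lem:trivial}: manufacture the data $(\knabla,\Phi,e,E)$ directly from the potential vector field $\vec g$ in the prescribed coordinates, and then show that the axioms of Definition~\ref{def:Saito} collapse to the extended WDVV equation.

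First I would define the data. Let $\knabla$ be the flat torsion-free connection on $TU$ characterized by $\knabla(\partial_{t_j})=0$, so that $(t_1,\dots,t_N)$ is automatically a flat coordinate system. Set $e:=\partial_{t_N}$ and $E:=\sum_{k=1}^N w_k t_k\partial_{t_k}$, so that conditions (C1)--(C3) hold by hypothesis. Define the matrix $\mathcal{C}=(\mathcal{C}_{ij})$ by $\mathcal{C}_{ij}:=\partial g_j/\partial t_i$ and the Higgs field $\Phi$ by
\[
\Phi(\partial_{t_i})(\partial_{t_k}):=\sum_{j=1}^N \frac{\partial^2 g_j}{\partial t_i\,\partial t_k}\,\partial_{t_j},
\]
so that the associated product satisfies $\partial_{t_i}\star\partial_{t_k}=\sum_j(\partial_{t_i}\partial_{t_k}g_j)\,\partial_{t_j}$ and $\tilde\Phi_{kj}=d\mathcal{C}_{kj}$. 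Commutativity of $\star$ is immediate from the symmetry of third partial derivatives, and associativity is precisely (\ref{eq:genWDVV}) after a relabelling of indices; the unit axiom $\Phi_e=\mathrm{Id}$ is the content of (\ref{eq:vpunit}); and $\knabla(e)=0$ holds by construction.

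It remains to show that the meromorphic connection $\nabla$ of Definition~\ref{def:Saito}(a) is integrable, which by Lemma~\ref{lem:trivial} is the system (\ref{eq:saitorel1})--(\ref{eq:saitorel2}) for the matrices $\mathcal T$, $\tilde\Phi$, $\mathcal B_\infty$ representing $-\Phi(E)$, $\Phi$ and $\knabla E$ in the basis $\{\partial_{t_i}\}$. A direct computation gives $\mathcal B_\infty=\mathrm{diag}(w_1,\dots,w_N)$. Differentiating the homogeneity identity (\ref{eq:vphomo}) with respect to $t_i$ yields
\[
\sum_{k=1}^N w_k t_k\,\frac{\partial^2 g_j}{\partial t_i\,\partial t_k}=(1-w_i+w_j)\,\mathcal{C}_{ij},
\]
whence $\mathcal T_{ij}=-(1-w_i+w_j)\mathcal{C}_{ij}=-E(\mathcal{C}_{ij})$. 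The equations $\tilde\Phi\wedge\tilde\Phi=0$ and $[\mathcal T,\tilde\Phi]=0$ then reduce to the associativity and commutativity of $\star$, while $d\tilde\Phi=d^2\mathcal C=0$ is automatic. The $(i,j)$-entry of $d\mathcal T+\tilde\Phi+[\tilde\Phi,\mathcal B_\infty]$ collapses to $\bigl(-(1-w_i+w_j)+1+(w_j-w_i)\bigr)\tilde\Phi_{ij}=0$, completing the verification.

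The substantive step is the associativity check, where the extended WDVV equation (\ref{eq:genWDVV}) is used in an essential way; once associativity is in hand, everything else is either a differential identity (the closedness of $\tilde\Phi$ and the symmetry of mixed partials) or a direct scalar computation driven by the weight relation $w(\mathcal{C}_{ij})=1-w_i+w_j$ extracted from (\ref{eq:vphomo}). The assumption $w_i-w_j\notin\mathbb{Z}$ for $i\neq j$ does not enter the existence argument itself, but is what pins down the flat coordinate system uniquely up to the normalizations (C1)--(C3), matching the output of the construction to the input data.
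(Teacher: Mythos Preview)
Your proposal is correct and follows exactly the paper's approach: define $\knabla(\partial_{t_i})=0$, $e=\partial_{t_N}$, $E=\sum_i w_it_i\partial_{t_i}$, $\mathcal{C}_{ij}=\partial g_j/\partial t_i$, $\Phi=d\mathcal{C}$, and verify the axioms. The paper's own proof is a two-line sketch that simply records these definitions and asserts that conditions~(a),(b) of Definition~\ref{def:Saito} hold; you have supplied the verification (associativity from (\ref{eq:genWDVV}), unit from (\ref{eq:vpunit}), and the integrability relations (\ref{eq:saitorel1})--(\ref{eq:saitorel2}) via the weight computation coming from (\ref{eq:vphomo})), which is precisely what the sketch leaves implicit.
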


\begin{proof}
  Define $E:=\sum_{i=1}^Nw_it_i\partial_{t_i},$ $e:=\partial_{t_N},$
  $\mathcal{C}_{ij}:=\frac{\partial g_j}{\partial t_i},$
  $\Phi:=d\mathcal{C}$
  and $\knabla(\partial_{t_i})=0,$ $1\leq i\leq N$.
Then $(\knabla, \Phi, E, e)$ satisfies the conditions (a), (b) in Definition \ref{def:Saito}
and $\vec{g}$ becomes its potential vector field.
\end{proof}

\subsection{Saito structure on the space of isomonodromic deformations of a generalized Okubo system} \label{subsec:Okubo}

We consider an extended generalized Okubo system (\ref{eq:okubopfaff}) with the same assumptions as in Section~\ref{sec:okuboiso}.
We show that the space $U$ of variables of (\ref{eq:okubopfaff}) is equipped with
a Saito structure under some generic condition.
The arguments below closely follow \cite[Chapter VII]{Sab}.

In a general setting, for a Saito bundle $(V,\nabla^V,\Phi^V,R_0^V,R_{\infty }^V)$ on a complex manifold $M$,
a $\nabla^V$-horizontal section $\omega$ of $V$ is said to be a {\it primitive section} if it satisfies the following conditions:

\begin{enumerate}
  \item[(i)] $R_{\infty}^V\omega=\lambda \cdot\omega$ for some $\lambda\in\mathbb{C}$,
  \item[(ii)] $\varphi_{\omega} : TM \to V$ defined by $\varphi_{\omega}(X):=\Phi_X^V(\omega)$ 
  for $X\in\Theta_M$ is an isomorphism.
\end{enumerate}

Thanks to \cite[Chapter VII, Theorem 3.6]{Sab}, if there is a primitive section $\omega$, 
we can introduce a Saito structure $(\knabla,\Phi,e,E)$ on $M$ via $\varphi_{\omega}$:
\begin{gather*}
  \knabla:=\varphi_{\omega}^{-1}\circ \nabla^V\circ \varphi_{\omega},\ \ 
  \Phi:=\varphi_{\omega}^{-1}\circ \Phi^V\circ \varphi_{\omega},\ \ 
  e:=\varphi_{\omega}^{-1}(\omega), \ \ E:=\varphi_{\omega}^{-1}(R_0^V\omega).
\end{gather*}

Returning to our setting,
we consider an extended generalized Okubo system (\ref{eq:okubopfaff}) defined on $\mathbb{P}^1\times U$,
where $U$ is a domain on $\mathbb{C}^N$.
For a while, we treat (\ref{eq:okubopfaff}) on an appropriate smaller domain $W\subset U\setminus \{\delta_{H_{{\rm red}}}=0\}$
 so that we can take an invertible matrix $P$ such that
\begin{equation} \label{eq:PtoT,B}
 P^{-1}TP=Z_1\oplus \cdots \oplus Z_n,\ \ \ \ 
 P^{-1}\tilde{\Omega}P
 =-dZ_1\oplus \cdots \oplus dZ_n.
\end{equation}
Then, as we observed in Section~\ref{sec:okuboiso}, the extended generalized Okubo system (\ref{eq:okubopfaff}) induces
a Saito bundle $(\mathbb{C}^N(W),\nabla^{(0)},\tilde{\Omega},T,-B_{\infty})$.
By definition, each of $\{{\bf e}_1,\dots,{\bf e}_N\}$ is $\nabla^{(0)}$-horizontal
and satisfies the condition (i). 

In the following, we employ the notation $i_{k,l}:=\sum_{j=1}^km_{j-1}+l+1$  for $k=1,\dots ,n, l=0,\dots ,m_k-1$,
 where we put $m_0=0$.

\begin{lemma} \label{lem:nscondflat}
 Take one of the sections $\{{\bf e}_1,\dots,{\bf e}_N\}$ of $\mathbb{C}^N(W)$, say ${\bf e}_N$.
 Then ${\bf e}_N$ is a primitive section of the Saito bundle $(\mathbb{C}^N(W),\nabla^{(0)},\tilde{\Omega},T,-B_{\infty})$
 if and only if $P_{N,i_{k,0}}\neq 0$, $1\leq k\leq n$, at any point on $W$.
\end{lemma}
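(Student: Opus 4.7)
The plan is as follows. Condition (i) in the definition of a primitive section is immediate: because $B_{\infty}=\mbox{diag}(\lambda_1,\dots,\lambda_N)$ is diagonal, the standard basis section ${\bf e}_N$ satisfies $R_{\infty}^V{\bf e}_N=-B_{\infty}{\bf e}_N=-\lambda_N{\bf e}_N$, so it is automatically an $R_{\infty}^V$-eigensection. Hence the whole content of the lemma is condition (ii), namely that the $\mathcal{O}_W$-linear map
\[
\varphi_{{\bf e}_N}\colon \Theta_W\longrightarrow \mathbb{C}^N(W),\quad X\mapsto \tilde{\Omega}_X{\bf e}_N
\]
is an isomorphism. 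Since source and target both have rank $N$, it suffices to check that $\varphi_{{\bf e}_N}$ is a fiberwise isomorphism at each point of $W$.

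By assumption (A4), the coordinate vector fields $\{\partial_{z_{k,l}}\}$ form a basis of $\Theta_W$. From Lemma~\ref{lem:lemma2.2} we have $\tilde{\Omega}=-P(dZ_1\oplus\cdots\oplus dZ_n)P^{-1}$, and from $Z_k=\sum_l z_{k,l}\Lambda_k^l$ we read off $\tilde{\Omega}_{\partial_{z_{k,l}}}=-P\,E_{k,l}\,P^{-1}$, where $E_{k,l}$ denotes the block-diagonal matrix whose $k$-th block is $\Lambda_k^l$ and all of whose other blocks vanish. Applying these to ${\bf e}_N$, and using that left multiplication by the invertible matrix $P$ preserves linear independence, the spanning property of $\{\varphi_{{\bf e}_N}(\partial_{z_{k,l}})\}$ is equivalent to the spanning of $\{E_{k,l}\,w\}_{k,l}$, where $w$ is the vector obtained from ${\bf e}_N$ by conjugation through $P$. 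Because $E_{k,l}$ is supported only in the $k$-th block, this spanning condition decouples across $k$ into the statement that, for each $k$, the vectors $\{\Lambda_k^l w^{(k)}\}_{l=0}^{m_k-1}$ span $\mathbb{C}^{m_k}$.

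Now comes the linear-algebra heart of the argument. For the nilpotent shift $\Lambda_k$ of size $m_k$ with ones on the superdiagonal, the matrix whose columns are $w^{(k)},\Lambda_kw^{(k)},\dots,\Lambda_k^{m_k-1}w^{(k)}$ is Hankel-like and anti-triangular, with a single entry of $w^{(k)}$ appearing along the entire anti-diagonal. Hence its invertibility reduces to the non-vanishing of that one extremal entry of $w^{(k)}$. Tracking this single scalar back through the conjugation by $P$, and using the indexing convention $i_{k,l}$ for the global position of the $l$-th slot of the $k$-th block, this extremal entry is precisely $P_{N,i_{k,0}}$. Assembling the $k=1,\dots,n$ conditions gives the desired equivalence.

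The principal obstacle I anticipate is neither the cyclicity lemma for the nilpotent $\Lambda_k$ (which is a standard Krylov computation via the anti-triangular determinant $\pm(\text{entry})^{m_k}$) nor the verification of condition (i), but rather the bookkeeping in the last paragraph: one must be careful about whether the relevant vector is a row or column of $P$ (resp.\ $P^{-1}$), and about which end of the Jordan chain the extremal entry corresponds to, in order to land on exactly the entry $P_{N,i_{k,0}}$ rather than some transposed or reflected index. Keeping the conjugation $\tilde{\Omega}=-P(dZ)P^{-1}$ and the relation $\Lambda_k^{m_k-1}\ne O$ lined up correctly with the $i_{k,\cdot}$ indexing is the only delicate point; the rest is a routine unwinding.
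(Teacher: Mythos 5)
Your overall strategy coincides with the paper's: condition (i) is immediate from the diagonality of $B_{\infty}$, and (ii) is reduced, after conjugating by $P$, to a block-by-block invertibility statement for a triangular matrix built from powers of the shift $\Lambda_k$, governed by one extremal coefficient. The gap is precisely the point you flag and then wave off as bookkeeping. You let $\tilde{\Omega}_X$ act on ${\bf e}_N$ as a \emph{column} vector from the left, so your $w=P^{-1}{\bf e}_N$ is the $N$-th column of $P^{-1}$, and your anti-triangular Krylov determinant is $\pm\bigl(w^{(k)}_{m_k}\bigr)^{m_k}=\pm\bigl((P^{-1})_{i_{k,m_k-1},N}\bigr)^{m_k}$, i.e.\ the \emph{bottom} entry of the $k$-th block of that column of $P^{-1}$ --- not $P_{N,i_{k,0}}$. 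These two scalars are unrelated for a general invertible $P$: already for $N=2$, $n=1$, $m_1=2$ and $P=\bigl(\begin{smallmatrix}0&1\\1&0\end{smallmatrix}\bigr)$ one has $P_{N,i_{1,0}}=P_{21}=1\neq0$ while $(P^{-1})_{22}=0$. So no amount of careful index-chasing inside your setup lands on the stated condition; you would prove a different (and, under the paper's conventions, false) criterion.

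The convention actually in force --- dictated by the definition of representation matrices via $\Phi(\partial_{t_i})=\sum_j\tilde{\Phi}_{ij}\partial_{t_j}$ and required for the companion Lemma~\ref{lem:PtoT}, which converts the criterion into $dT_{N1}\wedge\cdots\wedge dT_{NN}\neq0$, a condition on the $N$-th \emph{row} of $T$ --- is that coefficient vectors of sections are rows on which endomorphism matrices act from the right. Then ${\bf e}_N=(P_{N1},\dots,P_{NN})\,P^{-1}({\bf e}_1,\dots,{\bf e}_N)^t$ and
\[
\varphi_{{\bf e}_N}(X)=-(Xz_{1,0},\dots ,Xz_{n,m_n-1})
\Bigl(\bigoplus_{k=1}^n\sum_{l=0}^{m_k-1}P_{N,i_{k,l}}\Lambda_k^l\Bigr)P^{-1}({\bf e}_1,\dots,{\bf e}_N)^t,
\]
where each block $\sum_{l}P_{N,i_{k,l}}\Lambda_k^l$ is upper triangular with diagonal entry $P_{N,i_{k,0}}$; this is the paper's one-line proof. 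If you rerun your block decoupling and shift computation with this row convention (equivalently, transpose everything), your argument goes through verbatim and yields the stated condition.
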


\begin{proof}
 Noting ${\bf e}_N=(P_{N1},\dots,P_{NN})P^{-1}({\bf e}_1,\dots,{\bf e}_N)^t$ and (\ref{eq:PtoT,B}), we have
 \begin{align*}
   \varphi_{{\bf e}_N}(X)&=-(P_{N1},\dots,P_{NN})\Bigl(XZ_1\oplus\cdots\oplus XZ_n\Bigr)
   P^{-1}({\bf e}_1,\dots,{\bf e}_N)^t \\
   &=-(Xz_{1,0},\dots ,Xz_{n,m_n-1})
   \left(\bigoplus_{k=1}^n\sum_{l=0}^{m_k-1}P_{N,i_{k,l}}\Lambda_k^l\right) P^{-1}({\bf e}_1,\dots,{\bf e}_N)^t
 \end{align*}
 for $X\in \Theta_{U}$.
 Then it is clear that $\varphi_{{\bf e}_N}$
 is an isomorphism if and only if $\prod_{k=1}^nP_{N,i_{k,0}}\neq 0$.
\end{proof}


\begin{lemma} \label{lem:PtoT}
 For an extended generalized Okubo system $(\ref{eq:okubopfaff})$,
 the following two conditions are equivalent:
 \begin{enumerate}
   \item[{\rm (i)}] $dT_{N1}\wedge \cdots \wedge dT_{NN}\neq 0$
                  at any point on $W$,
   \item[{\rm (ii)}] $\prod_{k=1}^nP_{N,i_{k,0}}\neq 0$ at any point on $W$.
 \end{enumerate}
\end{lemma}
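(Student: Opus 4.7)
The plan is to reduce the wedge product of the $dT_{Nj}$ to a wedge product of $\tilde{\Omega}_{Nj}$, and then to a wedge product over the canonical coordinates $dz_{k,l}$, where the scalar factor that appears will be exactly the product $\prod_k P_{N,i_{k,0}}^{m_k}$.

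First I would exploit the integrability relation $dT+\tilde{\Omega}+[\tilde{\Omega},B_{\infty}]=O$ from (\ref{eq:ci1}). Taking the $(N,j)$-entry and using that $B_{\infty}$ is diagonal with eigenvalues $\lambda_i$, this reads
\[
dT_{Nj}=-(1+\lambda_{j}-\lambda_{N})\,\tilde{\Omega}_{Nj},\qquad 1\leq j\leq N.
\]
Assumption (A2) guarantees $\lambda_j-\lambda_N\notin\mathbb{Z}\setminus\{0\}$, so each coefficient $1+\lambda_j-\lambda_N$ is nonzero. Hence $dT_{N1}\wedge\cdots\wedge dT_{NN}$ vanishes at a point if and only if $\tilde{\Omega}_{N1}\wedge\cdots\wedge\tilde{\Omega}_{NN}$ does; this reduces the problem to an $\tilde{\Omega}$ computation.

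Next I would use the block-diagonal normal form (\ref{eq:taikakuBi}): writing $d\tilde{Z}:=dZ_{1}\oplus\cdots\oplus dZ_{n}$, one has $\tilde{\Omega}=-P\,d\tilde{Z}\,P^{-1}$, so the $N$-th row of $\tilde{\Omega}$ is
\[
(\tilde{\Omega}_{N1},\dots,\tilde{\Omega}_{NN})=-\bigl(P_{N1},\dots,P_{NN}\bigr)\,d\tilde{Z}\cdot P^{-1}.
\]
Since wedging the components of a row vector $v$ of $1$-forms transforms under right multiplication $v\mapsto vA$ by the factor $\det A$, and since $\det P\neq 0$, the wedge $\tilde{\Omega}_{N1}\wedge\cdots\wedge\tilde{\Omega}_{NN}$ differs from the wedge of the components of $v:=(P_{N1},\dots,P_{NN})\,d\tilde{Z}$ only by the nonzero scalar $(-1)^{N}/\det P$.

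Finally I would evaluate the components of $v$ block by block. For the $k$-th Jordan block of indices $i_{k,0},\dots,i_{k,m_k-1}$, using $dZ_{k}=\sum_{l=0}^{m_k-1}dz_{k,l}\Lambda_{k}^{l}$, the $s$-th component of the block is
\[
v_{i_{k,s}}=\sum_{r=0}^{s}P_{N,i_{k,r}}\,dz_{k,s-r},\qquad 0\leq s\leq m_{k}-1.
\]
A short induction on $s$, together with the fact that wedging with $dz_{k,0},\dots,dz_{k,s-1}$ kills all terms in $v_{i_{k,s}}$ except the one carrying $dz_{k,s}$, gives
\[
v_{i_{k,0}}\wedge\cdots\wedge v_{i_{k,m_{k}-1}}=P_{N,i_{k,0}}^{m_{k}}\,dz_{k,0}\wedge\cdots\wedge dz_{k,m_{k}-1}.
\]
Because the blocks are in disjoint sets of variables, the full wedge is the product
\[
v_{1}\wedge\cdots\wedge v_{N}=\Bigl(\prod_{k=1}^{n}P_{N,i_{k,0}}^{m_{k}}\Bigr)\bigwedge_{k=1}^{n}dz_{k,0}\wedge\cdots\wedge dz_{k,m_{k}-1}.
\]
Assumption (A4) says the wedge of the $dz_{k,l}$ is nowhere zero on $W$, so $v_{1}\wedge\cdots\wedge v_{N}\neq 0$ precisely when $\prod_{k=1}^{n}P_{N,i_{k,0}}\neq 0$, establishing the equivalence (i)$\Leftrightarrow$(ii).

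The main (but minor) obstacle is the triangular wedge-product computation within a single Jordan block; once the triangular pattern of $v$ is written down, the induction is transparent, and the only subtlety is that A2 must be invoked not only to ensure the existence of the flat coordinate normal form but also to guarantee that the scalars $1+\lambda_j-\lambda_N$ relating $dT_{Nj}$ and $\tilde{\Omega}_{Nj}$ are all nonzero.
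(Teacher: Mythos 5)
Your proof is correct and follows essentially the same route as the paper's: both derive $dT_{Nj}=(\lambda_N-\lambda_j-1)\tilde{\Omega}_{Nj}$ from (\ref{eq:ci1}), pass via (\ref{eq:taikakuBi}) to the row vector $(P_{N1},\dots,P_{NN})(dZ_1\oplus\cdots\oplus dZ_n)$, and conclude using (A2) and (A4). The only difference is that you spell out the block-triangular wedge computation yielding the factor $\prod_k P_{N,i_{k,0}}^{m_k}$, which the paper leaves implicit in its final ``hence''.
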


\begin{proof}
 From (\ref{eq:ci1}), it holds that
 \[
    dT_{Nj}=(\lambda_N-\lambda_j-1)\tilde{\Omega}_{Nj},
 \] 
 from which and (\ref{eq:taikakuBi}) we have
 \begin{align*}
   &(dT_{N1},\dots,dT_{NN})\,\mbox{diag}[\lambda_N-\lambda_1-1,\lambda_N-\lambda_2-1,\dots,-1]^{-1}P
   =(\tilde{\Omega}_{N1},\dots,\tilde{\Omega}_{NN})P \label{eq:TP=BP} \\
   &=-(dz_{1,0},\dots ,dz_{n,m_n-1})
   \left(\bigoplus_{k=1}^n\sum_{l=0}^{m_k-1}P_{N,i_{k,l}}\Lambda_k^l\right).
 \end{align*}
 Hence we obtain that (i)$\iff$(ii).
\end{proof}

So far we have treated (\ref{eq:okubopfaff}) on $W$ outside of $\{\delta_{H_{{\rm red}}}=0\}$.
From now on, we consider (\ref{eq:okubopfaff}) on $U$ including $\{\delta_{H_{{\rm red}}}=0\}$.

\begin{theorem} \label{saitojacobian}
 An extended generalized Okubo system $(\ref{eq:okubopfaff})$ induces
a Saito structure on $U$ if and only if
 \begin{equation} \label{jacobian}
     dT_{N1}\wedge \cdots \wedge dT_{NN} \neq 0 \mbox{ on } U.
 \end{equation}
 When the condition $(\ref{jacobian})$ is satisfied,
 the set of variables $t_j:=-(\lambda_j-\lambda_N+1)^{-1}T_{Nj}=C_{Nj}$, $1\leq j\leq N$ gives a flat coordinate system.
\end{theorem}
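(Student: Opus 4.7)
The approach is to apply Sabbah's primitive-section construction (\cite[Ch.~VII, Thm.~3.6]{Sab}) to the Saito bundle $(\mathbb{C}^N(U),\nabla^{(0)},\tilde\Omega,T,-B_\infty)$ associated with the extended generalized Okubo system, taking the natural candidate $\omega:={\bf e}_N$ as the primitive section. Both directions of the ``if and only if'' will reduce to the question of when $\omega$ is primitive, and this in turn will reduce to the Jacobian condition~(\ref{jacobian}).

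The horizontality $\nabla^{(0)}\omega=0$ is automatic for the trivial connection, and $R^V_\infty\omega=-B_\infty{\bf e}_N=-\lambda_N\omega$ by assumption~(A2). Hence only the isomorphism property of $\varphi_\omega:\Theta_U\to\mathbb{C}^N(U)$, $X\mapsto\Phi^V_X(\omega)$, needs to be analyzed. On the open subset $W=U\setminus\{\delta_{H_{\rm red}}=0\}$ where the block-diagonalizer $P$ of Lemma~\ref{lem:lemma2.2} is available, Lemmas~\ref{lem:nscondflat} and~\ref{lem:PtoT} already give the desired equivalence between the isomorphism property of $\varphi_\omega$ and the Jacobian condition. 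The nontrivial step is to extend this equivalence from $W$ to all of $U$.

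For this extension, the plan is to replace the $W$-local use of $P$ by a computation involving only the globally defined data $T,\tilde\Omega,B_\infty$. Extracting the $(N,j)$-entry of the second equation in~(\ref{eq:ci1}) and using that $B_\infty$ is diagonal yields, on all of $U$,
\[
   dT_{Nj}=-(\lambda_j-\lambda_N+1)\,\tilde\Omega_{Nj},\qquad 1\le j\le N,
\]
where the scalar $\lambda_j-\lambda_N+1$ is nonzero by (A2). A direct computation of $\varphi_\omega(X)=\Phi^V_X({\bf e}_N)$ shows that its coefficient vector in the frame $\{{\bf e}_i\}$ coincides, up to a globally invertible diagonal rescaling, with $(\iota_X\tilde\Omega_{N1},\ldots,\iota_X\tilde\Omega_{NN})$. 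Combined with the displayed identity, this gives the pointwise equivalence on $U$: $\varphi_\omega$ is an isomorphism at a point if and only if $dT_{N1}\wedge\cdots\wedge dT_{NN}\ne 0$ there. Sabbah's theorem then yields the Saito structure $(\knabla,\Phi,e,E)$ on $U$ precisely when~(\ref{jacobian}) holds.

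For the flat-coordinate assertion, the Sabbah prescription $\knabla=\varphi_\omega^{-1}\circ\nabla^{(0)}\circ\varphi_\omega$ makes the frame $\{\partial_{t_j}:=\varphi_\omega^{-1}({\bf e}_j)\}_{j=1}^N$ flat. With $t_j:=-(\lambda_j-\lambda_N+1)^{-1}T_{Nj}$, the displayed identity gives $dt_j=\tilde\Omega_{Nj}$; pairing with $\partial_{t_j}$ and using the same invertible rescaling as above reproduces $\varphi_\omega(\partial_{t_j})={\bf e}_j$, so the $t_j$ are indeed flat coordinates. The equality $t_j=\mathcal{C}_{Nj}$ is then a restatement of Lemma~\ref{lem:flatC=t} applied to the induced Saito structure.

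The main obstacle is the coordinate-free extension from $W$ to $U$: the arguments underlying Lemmas~\ref{lem:nscondflat} and~\ref{lem:PtoT} are $P$-dependent and degenerate along the divisor $\{\delta_{H_{\rm red}}=0\}$, so one must rephrase the primitivity condition entirely in terms of the globally defined data $T,\tilde\Omega,B_\infty$. The relation $dT_{Nj}=-(\lambda_j-\lambda_N+1)\tilde\Omega_{Nj}$, which holds throughout $U$ by~(\ref{eq:ci1}) together with~(A2), is exactly what makes this global reformulation possible.
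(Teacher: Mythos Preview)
Your argument is correct and rests on the same ingredients as the paper's proof --- the Saito bundle $(\mathbb{C}^N(U),\nabla^{(0)},\tilde\Omega,T,-B_\infty)$, the candidate primitive section ${\bf e}_N$, and the identity $dT_{Nj}=-(\lambda_j-\lambda_N+1)\tilde\Omega_{Nj}$ coming from~(\ref{eq:ci1}) --- but the execution is tighter. The paper first restricts to $W\subset U\setminus\{\delta_{H_{\rm red}}=0\}$, invokes the $P$-dependent Lemmas~\ref{lem:nscondflat} and~\ref{lem:PtoT} there, writes down the Saito data $(\knabla,\Phi,e,E)$ explicitly in the coordinates $t_j$, and then appeals to the identity theorem to propagate conditions~(a),(b) across the divisor $\{\delta_{H_{\rm red}}=0\}$. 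You instead observe that the formula $\varphi_{{\bf e}_N}(X)=\sum_j(\iota_X\tilde\Omega_{Nj}){\bf e}_j$ is already global on $U$ (it uses only $\tilde\Omega$, not $P$), so the equivalence ``${\bf e}_N$ primitive $\Leftrightarrow$ $\tilde\Omega_{N1}\wedge\cdots\wedge\tilde\Omega_{NN}\ne 0$ $\Leftrightarrow$~(\ref{jacobian})'' holds pointwise everywhere, and Sabbah's theorem can be applied on $U$ without any analytic-continuation step. This buys you a cleaner proof and, incidentally, makes the ``only if'' direction transparent at every point of $U$, whereas in the paper's argument the identity-theorem step is phrased only for the ``if'' direction. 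One small remark: the ``globally invertible diagonal rescaling'' you mention is in fact the identity --- the coefficient vector of $\varphi_{{\bf e}_N}(X)$ in the frame $\{{\bf e}_j\}$ is exactly $(\iota_X\tilde\Omega_{Nj})_j$, with no rescaling needed.
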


\begin{proof}
  In virtue of Lemmas~\ref{lem:nscondflat} and~\ref{lem:PtoT},
  the theorem holds on $W\subset U\setminus \{\delta_{H_{{\rm red}}}=0\}$.
  We also see that $\{t_j=C_{Nj}\}$ is a flat coordinate system by Lemma~\ref{lem:flatC=t}.
  Then $(\knabla,\Phi,E,e)$, where
  $\Phi:=dC$, $E:=\sum_{k=1}^N(\lambda_k-\lambda_n+1)t_k\partial_{t_k}$, $e:=\partial_{t_N}$ and
  $\knabla$ is the connection defined by $\knabla(\partial_{t_i})=0$, 
  satisfies the conditions (a),(b) of Saito structure on $W$.
  Due to the identity theorem,
  $(\knabla,\Phi,E,e)$ satisfies the conditions (a),(b) on $U$.
  Hence (\ref{eq:okubopfaff}) induces a Saito structure on $U$.
\end{proof}

\begin{remark} \label{rem:uniqunessofSaitoOkubo}
  In general, the Saito structure induced by an extended generalized Okubo system is not unique:
  there are $N$ choices of eigenvectors $\{{\bf e}_1,\dots, {\bf e}_N\}$ of $B_{\infty}$,
  correspondingly we have at most $N$ distinct Saito structures associated with the extended generalized Okubo system.
  By designating one of eigenvectors of $B_{\infty}$,
  we can specify a unique Saito structure.
\end{remark}

Applying Theorem~\ref{saitojacobian},
we can solve the initial value problem on regular Saito structures for generic initial conditions.

\begin{corollary} \label{cor:initial}
  The following two assertions hold:
  \begin{enumerate}
  
  \item[{\rm (i)}] Let $S$ be a constant $N\times N$ matrix that is regular in the sense of {\rm (A3)} in Section~$\ref{sec:okuboiso}$,
  and $R$ be a constant $N\times N$ matrix 
  whose eigenvalues $\{\lambda_1,\dots,\lambda_N\}$ satisfy 
  $\lambda_i-\lambda_j\not\in \mathbb{Z}\setminus\{0\}$ for $i\neq j$
  and $v$ be one of eigenvectors of $R$.
  For $(S,R,v)$ and a point $p_0\in \mathbb{C}^N$,
  if $S$ is generic (the meaning of that ``$S$ is generic'' is clarified in the proof),
  there exists a regular Saito structure on a neighborhood of $p_0$ uniquely up to isomorphisms.
  
  \item[{\rm (ii)}] 
  Let $(\knabla, \Phi, E, e)$ be a regular Saito structure on a domain $U$ and $p_0\in U\setminus\{\delta_{H_{red}}=0\}$
  for $H(z)=(z\cdot id_{TU}-\mathcal{T})$ (see {\rm (A3)} in Section~$\ref{sec:okuboiso}$ for the definition of $\delta_{H_{red}}$).
  Then the Saito structure $(\knabla, \Phi, E, e)$
  is uniquely determined up to isomorphisms
  by $-\Phi(E)|_{p_0}, \knabla(E)|_{p_0}\in \mbox{End}_{\mathbb{C}}(T_{p_0}X)$.
  \end{enumerate}
\end{corollary}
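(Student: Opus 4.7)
The plan is to reduce both assertions to the initial-value problem for extended generalized Okubo systems (Lemma~\ref{lem:isomono}(ii)) combined with the construction of Saito structures from such systems (Theorem~\ref{saitojacobian}).

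For (i), I would first bring the data into standard form: diagonalize $R$ by an invertible matrix $g$ so that $gRg^{-1}=\mathrm{diag}(\lambda_{1},\dots,\lambda_{N})$ and $gv={\bf e}_{N}$, and set $\widetilde S:=gSg^{-1}$. Consider the generalized Okubo system $(zI_{N}-\widetilde S)\,dY/dz=-\mathrm{diag}(\lambda_{1},\dots,\lambda_{N})\,Y$ on the fibre $\mathbb{P}^{1}\times\{p_{0}\}$: condition (A2) is just the hypothesis on $R$, and (A3) holds at $p_{0}$ since $S$ is regular. Lemma~\ref{lem:isomono}(ii) produces a unique extended generalized Okubo system on a neighborhood $W$ of $p_{0}$ extending this initial data. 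Theorem~\ref{saitojacobian} then yields a Saito structure on $W$ as soon as $dT_{N1}\wedge\cdots\wedge dT_{NN}\neq 0$ at $p_{0}$, and by Lemma~\ref{lem:PtoT} this is equivalent to $\prod_{k=1}^{n}P_{N,i_{k,0}}\neq 0$, where $P$ is the matrix putting $\widetilde S$ into block-Jordan form as in (\ref{eq:TandZ}). Since the $N$-th row of $P^{-1}$ is determined by $\widetilde S$ (hence by the pair $(S,v)$), this nonvanishing is precisely the meaning of ``$S$ is generic''. Uniqueness up to isomorphism follows from the uniqueness clause of Lemma~\ref{lem:isomono}(ii): Lemmas~\ref{lem:Btow} and~\ref{lem:trivial}, together with Remark~\ref{rem:uniqunessofSaitoOkubo}, allow the extended generalized Okubo system to be recovered from any regular Saito structure realizing the given initial data.

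For (ii), pick any flat coordinate system $(t_{1},\dots,t_{N})$ on a neighborhood of $p_{0}$. By Lemmas~\ref{lem:Btow} and~\ref{lem:trivial}, the Saito structure induces an extended generalized Okubo system with $T=\mathcal{T}$, $\tilde\Omega=\tilde\Phi$, $B_{\infty}=\mathcal{B}_{\infty}$. The matrices $T|_{p_{0}}$ and $B_{\infty}$ are nothing other than the representations of the endomorphisms $-\Phi(E)|_{p_{0}}$ and $\knabla(E)|_{p_{0}}$ of $T_{p_{0}}X$ in the flat basis at $p_{0}$, and the marked unit $e|_{p_{0}}$ plays the role of the distinguished eigenvector $v$ from part (i), cf.\ Remark~\ref{rem:uniqunessofSaitoOkubo}. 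Applying (i) with $(S,R,v):=(-\Phi(E)|_{p_{0}},\,\knabla(E)|_{p_{0}},\,e|_{p_{0}})$ then yields the asserted uniqueness up to isomorphism.

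The main delicate point is the genericity condition in (i): after passing through Lemmas~\ref{lem:nscondflat} and~\ref{lem:PtoT}, it becomes a non-degeneracy statement on how the distinguished eigenvector $v$ pairs with the generalized eigenspaces of $S$, and verifying that this condition is indeed open and nonempty for regular $S$ requires the somewhat careful bookkeeping of the entries of $P$ indicated above. The hypothesis $p_{0}\notin\{\delta_{H_{red}}=0\}$ in (ii) is exactly what guarantees, via (A3), that $-\Phi(E)|_{p_{0}}$ has the maximal number of Jordan blocks, so that the construction in (i) is applicable without modification.
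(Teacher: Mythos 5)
Your proposal follows essentially the same route as the paper: conjugate $(S,R,v)$ into the normal form $T=G^{-1}SG$, $B_\infty=\mathrm{diag}(\lambda_1,\dots,\lambda_N)$ with $v$ sent to ${\bf e}_N$, invoke Lemma~\ref{lem:isomono}(ii) for existence and uniqueness of the extended generalized Okubo system, and then Theorem~\ref{saitojacobian} for the Saito structure, with the genericity of $S$ being exactly the Jacobian condition (\ref{jacobian}); your extra unpacking of that condition via Lemma~\ref{lem:PtoT} is consistent with the paper. The one real gap is in (ii): the statement asserts that the Saito structure is determined by the \emph{pair} $-\Phi(E)|_{p_0}$, $\knabla(E)|_{p_0}$ alone, whereas your argument feeds the triple $(-\Phi(E)|_{p_0},\knabla(E)|_{p_0},e|_{p_0})$ into part (i) without explaining why $e|_{p_0}$ is itself recoverable from the given data. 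The paper closes this by observing that $\knabla_e(E)=e$, i.e.\ $e|_{p_0}$ is the eigenvector of $\knabla(E)|_{p_0}$ with eigenvalue $w_N=1$, so the distinguished eigenvector $v$ needed for (i) is already determined by $\knabla(E)|_{p_0}$. Adding that one sentence completes your argument.
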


\begin{proof}
  For a given $(S,R,v)$ satisfying the assumption in (i), let $\lambda_N$ be the eigenvalue of $v$.
  Take an invertible matrix $G$ such that its $N$-th column coincides with $v$ 
  and such that $G^{-1}RG=\mbox{diag}(\lambda_1,\dots,\lambda_N)$.
  We consider the following generalized Okubo system:
  \begin{equation} \label{eq:initialOkubo}
    (zI_N-T)\frac{dY}{dz}=-B_{\infty}Y,
  \end{equation}
  where we put $T:=G^{-1}SG$ and $B_{\infty}:=\mbox{diag}(\lambda_1,\dots,\lambda_N)$.
  Then in virtue of Lemma~\ref{lem:isomono},
  there exists a unique extended generalized Okubo system on $\mathbb{P}^1\times W$ determined by (\ref{eq:initialOkubo}),
  where $W\subset U\setminus \{\delta_{H_{red}}=0\}$ is a neighborhood of $p_0$.
  If the extended generalized Okubo system satisfies the condition (\ref{jacobian}) of Theorem~\ref{saitojacobian}
  (which is the meaning of ``generic condition for $S$'' mentioned in the assertion (i)),
  there exists a unique Saito structure on $W$ determined by $(S,R,v)$.
  
  We prove (ii).
  The unit field $e|_{p_0}\in T_{p_0}X$ at $p_0$ is an eigenvector 
  of $\knabla(E)|_{p_0}$ belonging to the eigenvalue $w_N=1$ (i.e. $\knabla_e(E)=e$),
  and thus $e|_{p_0}$ is recovered from $\knabla(E)|_{p_0}$.
  Then, from (i), we see that the Saito structure $(\knabla, \Phi, E, e)$ is uniquely determined by 
  $(-\Phi(E)|_{p_0}, \knabla(E)|_{p_0}, e|_{p_0})$.
\end{proof}

\begin{remark} \label{rem:initialFS}
  Saito structure is an upper structure of $F$-manifold introduced by C. Hertling and Y. Manin \cite{HM,He}.
  The initial condition theorem for regular $F$-manifolds was studied by L. David and C. Hertling \cite{DH}.
  In particular, they proved that an $F$-manifold is uniquely determined up to isomorphisms 
  by $-\Phi(E)|_{p_0}$.
\end{remark}

\section{Potential vector fields associated with solutions to the Painlev\'e equations} \label{sec:flatPainleve}

In this section, applying the results in the previous section,
we introduce potential vector fields associated with solutions 
to the (classical) Painlev\'e equations.

In \cite{KMS}, the following was proved:

\begin{proposition}[\cite{KMS}] \label{thm:genPVI}
  In the case of $N=3$, there is a correspondence between 
  generic solutions to the sixth Painlev\'e equation PVI and
  generic solutions to the extended WDVV equation
 \begin{gather}
  \sum_{m=1}^3\frac{\partial^2 g_m}{\partial t_k\partial t_i}\frac{\partial^2 g_j}{\partial t_l\partial t_m}=
  \sum_{m=1}^3\frac{\partial^2 g_m}{\partial t_l\partial t_i}\frac{\partial^2 g_j}{\partial t_k\partial t_m},
\ \ i,j,k,l=1,2,3, \label{eq:genWDVV2} \\
  \frac{\partial ^2 g_j}{\partial t_3\partial t_i}=\delta_{ij},\ \ \ \ i,j=1,2,3, \label{eq:vpunit2} \\
  Eg_j=\sum_{k=1}^3w_kt_k\frac{\partial g_j}{\partial t_k}=(1+w_j)g_j,\ \ \ j=1,2,3 \label{eq:vphomo2}
 \end{gather}
 satisfying the additional condition
  \begin{equation} \label{eq:regcondPVI}
   \left(-(1+w_j-w_i)\displaystyle\frac{\partial g_j}{\partial t_i}\right)_{1\leq i,j\leq 3}\sim 
   \begin{pmatrix} z_{1,0} & {} & {} \\ {} & z_{2,0} & {} \\ {} & {} & z_{3,0} \end{pmatrix}.
 \end{equation}
\end{proposition}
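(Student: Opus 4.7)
\medskip
\noindent\textbf{Proof proposal.} The plan is to use extended generalized Okubo systems as the bridge between the two sides of the stated correspondence. Recall that PVI arises as the isomonodromic deformation equation of a rank-two Fuchsian system on $\mathbb{P}^1$ with four regular singular points (the Fuchs--Garnier pair), three of which are finite and, for generic data, distinct. Applying Kawakami's construction (Appendix~\ref{app:compOkubo}) to this rank-two system converts it into a rank-three Okubo system of the form (\ref{eq:okubo}) in which $T$ is diagonalizable and whose eigenvalues coincide with the positions $z_{1,0}, z_{2,0}, z_{3,0}$ of the three finite singular points.

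For the \emph{forward direction} (PVI $\Rightarrow$ WDVV with (\ref{eq:regcondPVI})), I would regard $(z_{1,0},z_{2,0},z_{3,0})$ as independent variables on a 3-dimensional domain $U$; the PGL$(2,\mathbb{C})$-covariant Schlesinger system in these variables is equivalent to PVI. Running Kawakami's construction in families yields a completely integrable Pfaffian extension (\ref{eq:okubopfaff}) of the Okubo system on $\mathbb{P}^1\times U$, that is, an extended generalized Okubo system (by Lemma~\ref{lem:isomono}). Since $T$ is diagonal with the eigenvalues $z_{k,0}$ as entries, the Jacobian condition (\ref{jacobian}) of Theorem~\ref{saitojacobian} holds on a dense open subset, and Theorem~\ref{saitojacobian} equips $U$ with a Saito structure; Proposition~\ref{prop:potential vector field} then produces a potential vector field $\vec g$ satisfying (\ref{eq:genWDVV2})--(\ref{eq:vphomo2}). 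Finally, since $\mathcal{T}=-\Phi(E)$ was diagonal by construction and equals $(-(1+w_j-w_i)\,\partial g_j/\partial t_i)_{i,j}$ by Lemma~\ref{lem:Cishomo}, condition (\ref{eq:regcondPVI}) is automatic.

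For the \emph{converse direction}, given $\vec g$ satisfying (\ref{eq:genWDVV2})--(\ref{eq:vphomo2}) and (\ref{eq:regcondPVI}), Proposition~\ref{prop:fromvptoSaito} reconstructs a Saito structure; the associated Birkhoff normal form (Remark~\ref{rem:BirkhoffOkubo}) is converted via Fourier--Laplace into a rank-three generalized Okubo system in which $T$ is diagonalizable with three distinct eigenvalues by (\ref{eq:regcondPVI}), hence is an Okubo system proper. Inverting Kawakami's construction on this generic locus recovers a rank-two Fuchsian system with four regular singular points, whose isomonodromic deformation equation is PVI. The two constructions are mutually inverse on the relevant generic open loci, which establishes the asserted correspondence.

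The main technical obstacle is the precise \emph{parameter dictionary}: the four essential Painlev\'e parameters of PVI must correspond bijectively to the three weights $(w_1,w_2,w_3)$ of the Euler field together with the choice of a primitive section among the eigenvectors of $B_\infty$ (see Remark~\ref{rem:uniqunessofSaitoOkubo}) on the WDVV side. This demands a careful tracking of local exponents through Kawakami's construction and its inverse, and a verification that ``generic PVI solution'' and ``generic solution of (\ref{eq:genWDVV2})--(\ref{eq:vphomo2}) with (\ref{eq:regcondPVI})'' correspond exactly, modulo the respective symmetry groups (PGL$(2,\mathbb{C})$ acting on the positions of the singular points on one side, the ambiguity of primitive sections on the other). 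Once this dictionary is pinned down, the remaining steps are immediate consequences of the integrability and reconstruction lemmas already established in Sections~\ref{sec:okuboiso} and~\ref{sec:Saito structure}.
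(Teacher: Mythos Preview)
Your proposal is correct and follows the same template the paper uses to prove the analogous propositions for PV, PIV, PIII and PII in Section~\ref{sec:flatPainleve}. Note, however, that the paper does not itself prove Proposition~\ref{thm:genPVI}: it is quoted from \cite{KMS}. So there is no ``paper's own proof'' to compare against for this particular statement, only the parallel arguments for the other Painlev\'e equations.

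Two minor comments on presentation. First, in your converse direction you route through the Birkhoff normal form and a Fourier--Laplace transform (Remark~\ref{rem:BirkhoffOkubo}); this detour is unnecessary. The Saito structure built in Proposition~\ref{prop:fromvptoSaito} already yields the matrices $\mathcal{T}$, $\tilde{\Phi}$, $\mathcal{B}_\infty$ directly, and by Lemma~\ref{lem:trivial} these satisfy (\ref{eq:commute})--(\ref{eq:ci1}), so one has the extended generalized Okubo system immediately without passing through the Birkhoff form. This is how the paper proceeds in the PV proof.

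Second, your description of the parameter dictionary is not quite right: the choice of primitive section among eigenvectors of $B_\infty$ is a discrete choice, not a continuous parameter, so it cannot account for one of the four PVI parameters. In the paper's PV computation the dictionary is obtained by tracking both the eigenvalues $w_i$ of $B_\infty$ and the diagonal entries of $B=-P^{-1}B_\infty P$ (which encode the local exponents at the finite singularities); the same mechanism applies to PVI. Your instinct that this bookkeeping is the only nontrivial remaining work is correct.
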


\begin{remark}
  A. Arsie and P. Lorenzoni \cite{AL0,Lo} showed that three-dimensional regular semisimple bi-flat $F$-manifolds 
  is parameterized by solutions to the Painlev\'e VI equation.
\end{remark}

Regarding the Painlev\'e V equation, we obtain the following result 
as a consequence of the arguments in the present paper:

\begin{proposition}[Painlev\'e V]
 In the case of $N=3$, there is a correspondence between 
 generic solutions to the fifth Painlev\'e equation PV and
 generic solutions to the extended WDVV equation 
 $(\ref{eq:genWDVV2})$-$(\ref{eq:vphomo2})$
 satisfying the additional condition
  \begin{equation} \label{eq:addconPV}
   \left(-(1+w_j-w_i)\displaystyle\frac{\partial g_j}{\partial t_i}\right)_{1\leq i,j\leq 3}\sim 
   \begin{pmatrix} z_{1,0} & z_{1,1} & {} \\ {} & z_{1,0} & {} \\ {} & {} & z_{2,0} \end{pmatrix}.
 \end{equation}
\end{proposition}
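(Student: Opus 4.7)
The plan is to mirror the PVI case treated in Proposition~\ref{thm:genPVI}, exploiting the confluent analogue at the level of generalized Okubo systems. The Jordan form on the right-hand side of (\ref{eq:addconPV}) is precisely the next step in the Jordan degeneration diagram drawn in the introduction, so one expects the construction to go through with type $(2,1)$ replacing type $(1,1,1)$.

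First, I would start from a $2\times 2$ Lax pair whose isomonodromic deformations are governed by PV (for instance, the Jimbo--Miwa system with two regular singular points and one irregular singular point of Poincar\'e rank $1$ at $\infty$). Applying Kawakami's procedure (reviewed in Appendix~\ref{app:compOkubo}) converts this linear system into a generalized Okubo system of rank $3$. The essential structural output is that the matrix $T$ then has Jordan type $(2,1)$: the $2\times 2$ block reflects the rank-$1$ irregular singularity and the simple block reflects the surviving regular singularity. This already matches the Jordan form appearing on the right-hand side of (\ref{eq:addconPV}), and the eigenvalues $\lambda_1,\lambda_2,\lambda_3$ of $B_\infty$ are determined (modulo the shift of Remark~\ref{rm:henkan}) by the formal monodromy data of the original $2\times 2$ system.

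Next, I would check that the Pfaffian extension carried by the PV isomonodromic family satisfies (\ref{eq:commute}), (\ref{eq:ci1}), so by Lemma~\ref{lem:isomono} one obtains a bona fide extended generalized Okubo system of the form (\ref{eq:okubopfaff}). Under the generic condition $dT_{31}\wedge dT_{32}\wedge dT_{33}\neq 0$, Theorem~\ref{saitojacobian} produces a Saito structure on (an open subset of) the deformation parameter space, with flat coordinates $t_j=-(\lambda_j-\lambda_3+1)^{-1}T_{3j}$; Proposition~\ref{prop:potential vector field} then yields a potential vector field $\vec g=(g_1,g_2,g_3)$ solving the extended WDVV equation (\ref{eq:genWDVV2})--(\ref{eq:vphomo2}). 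The matrix on the left-hand side of (\ref{eq:addconPV}) is nothing but the representation matrix $\mathcal{T}$ of $-\Phi(E)$, which is conjugate to $T$ by construction, so (\ref{eq:addconPV}) holds automatically. Conversely, starting from a generic solution of (\ref{eq:genWDVV2})--(\ref{eq:vphomo2}) with the prescribed Jordan type, Proposition~\ref{prop:fromvptoSaito} reconstructs a Saito structure; the induced extended generalized Okubo system has $T$ of Jordan type $(2,1)$, and running Kawakami's construction in reverse together with Lemma~\ref{lem:isomono}(i) identifies the PV isomonodromic deformation governing it.

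The main obstacle will be the parameter matching and the characterization of ``generic''. On the Okubo side the data are $\lambda_1,\lambda_2,\lambda_3$ together with the Jordan type of $T$, while PV has fewer essential parameters; one has to account carefully for the choice of eigenvector of $B_\infty$ (Remark~\ref{rem:uniqunessofSaitoOkubo}), the shift freedom $B_\infty\mapsto B_\infty-\lambda I_N$ of Remark~\ref{rm:henkan}, and the residual gauge symmetries of the generalized Okubo system, and verify that after quotienting by these the remaining data correspond bijectively to PV parameters modulo B\"acklund symmetries. Simultaneously one must check that the Jacobian condition (\ref{jacobian}) translates exactly into the standard genericness hypothesis on PV solutions, and that the Okubo normalization $\lambda_i-\lambda_j\notin\mathbb{Z}\setminus\{0\}$ is compatible with generic PV parameters.
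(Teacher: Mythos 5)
Your proposal follows essentially the same route as the paper: both directions go through Kawakami's reduction (Appendix~\ref{app:compOkubo}) between the $2\times 2$ PV Lax pair and a rank-three generalized Okubo system whose $T$ has Jordan type $(2,1)$, combined with Lemma~\ref{lem:isomono} and Theorem~\ref{saitojacobian} to pass to a Saito structure and its potential vector field. The only adjustments in the paper are that the Lax pair is normalized so that the rank-one irregular point sits at $z=1$ while $\infty$ remains a regular singular point (as any generalized Okubo system requires), and that the parameter dictionary is made explicit ($\theta^{\infty}_1=w_1-w_3$, $\theta^{\infty}_2=w_2-w_3$, $\theta^0=B_{33}$, $\theta^1=B_{11}+B_{22}$, $t=z_{1,1}B_{21}/(z_{1,0}-z_{2,0})$), which resolves the parameter-matching concern you raise at the end.
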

%

\begin{proof}
According to \cite{JM,KawDT}, the Painlev\'e V equation is derived from the isomonodromic deformation 
of the system of linear differential equations which has regular singular points at $z=0, \infty$ and
an irregular singular point of Poincar\'e rank 1 at $z=1$:
\begin{equation} \label{eq:rank2linPV}
\frac{dY}{dz}=\left( \frac{A_0}{z}+\frac{A_1}{(z-1)^2}+\frac{A_2}{z-1} \right)Y,
\end{equation}
where $A_0, A_1, A_2$ are $2\times 2$ matrices independent of $z$.
On (\ref{eq:rank2linPV}), we assume the following conditions:
 \begin{enumerate}
  \item[(i)] the exponent matrix at $z=0$ is given by $T^{(0)}_0=\mbox{diag}(\theta^0,0)$,
  
  \item[(ii)] $A_{\infty}=\mbox{diag}(\theta^\infty_1,\theta^\infty_2)$ for $A_{\infty}:=-A_0-A_2$,
  
  \item[(iii)] a fundamental system of formal solutions at $z=1$ is given as
\begin{align}
  &Y(z)=G^{(1)}(1+\hat{Y}^{(1)}_1(z-1)+\cdots)e^{T^{(1)}(z)}, \\
  &T^{(1)}(z)=\begin{pmatrix} t & {} \\ {} & 0 \end{pmatrix}\frac{-1}{z-1}+\begin{pmatrix} \theta^1 & {} \\ {} & 0 \end{pmatrix}\log (z-1),
\end{align}
where $G^{(1)}$ is an invertible matrix such that $(G^{(1)})^{-1}A_1G^{(1)}=\mbox{diag}(t,0)$.
 \end{enumerate}
Under these assumptions, we find that the matrices $A_0, A_1, A_2$ are written as
\begin{align*}
A_0&=
\begin{pmatrix}
u & 0 \\
0 & 1
\end{pmatrix}^{-1}
\left\{
\frac{1}{\theta^\infty_{12}}
\begin{pmatrix}
pq+\theta^\infty_{12} \\
-p
\end{pmatrix}
\begin{pmatrix}
p(q-1)+\theta^0 & (pq+\theta^\infty_{12})(q-1)+\theta^0 q
\end{pmatrix}
\right\}
\begin{pmatrix}
u & 0 \\
0 & 1
\end{pmatrix}, \\
A_1&=
\begin{pmatrix}
u & 0 \\
0 & 1
\end{pmatrix}^{-1}
\left\{
-\frac{t}{\theta^\infty_{12}}
\begin{pmatrix}
(pq-\theta^\infty_2)(q-1)-\theta^\infty_1 \\
p(1-q)+\theta^\infty_2
\end{pmatrix}
\begin{pmatrix}
1 & q
\end{pmatrix}
\right\}
\begin{pmatrix}
u & 0 \\
0 & 1
\end{pmatrix}, \\
A_2&=-A_0-
\begin{pmatrix}
\theta^\infty_1 & 0 \\
0 & \theta^\infty_2
\end{pmatrix},
\end{align*}
and $\theta^\infty_{12}:=\theta^\infty_1-\theta^\infty_2$,
where $t$ is a deformation parameter, $p, q$ are unknown functions for the isomonodromic deformation, 
$\theta^0,\theta^1,\theta_1^{\infty},\theta_2^{\infty}$ are constant parameters 
satisfying $\theta^0+\theta^1+\theta_1^{\infty}+\theta_2^{\infty}=0$
and $u$ is an overall parameter.
The Painlev\'e V equation is equivalent to the Hamiltonian system
\begin{equation} \label{eq:sysH_V}
  \frac{dq}{dt}=\frac{\partial H_{\mathrm{V}}}{\partial p},\ \ 
  \frac{dp}{dt}=-\frac{\partial H_{\mathrm{V}}}{\partial q}
\end{equation}
 with the Hamiltonian
\begin{equation}
tH_{\mathrm{V}}=p(p+t)q(q-1)+(\theta^0+\theta^\infty_1-\theta^\infty_2)qp
+(\theta^\infty_2-\theta^\infty_1)p-\theta^\infty_2 tq.
\end{equation}
Indeed, after the canonical transformation 
\begin{equation}
  (q,p) \to (\lambda, \mu):=\left(1-\frac{1}{q}, q(pq-\theta_2^{\infty})\right),
\end{equation}
$\lambda$ satisfies the Painlev\'e V equation
\begin{align*}
\frac{d^2\lambda}{dt^2}=\left( \frac{1}{2\lambda}+\frac{1}{\lambda-1} \right)\left( \frac{d\lambda}{dt} \right)^2-\frac{1}{t}\frac{d\lambda}{dt}+
\frac{(\lambda-1)^2}{t^2}\left( \alpha \lambda+\frac{\beta}{\lambda} \right)
+\gamma\frac{\lambda}{t}-\frac12\frac{\lambda(\lambda+1)}{\lambda-1},
\end{align*}
with
\[
\alpha=\frac{(\theta^\infty_1-\theta^\infty_2)^2}{2}, \quad \beta=-\frac{(\theta^0)^2}{2}, \quad \gamma=1-\theta^1.
\]

Now we prove the proposition.
At the first step, we start from a solution to the extended WDVV equation (\ref{eq:genWDVV2})-(\ref{eq:vphomo2}) 
satisfying the condition (\ref{eq:addconPV}).
Then we have a $3\times 3$ extended generalized Okubo system (\ref{eq:okubopfaff}) with
 \[
  T_{ij}=-(1+w_j-w_i)\frac{\partial g_j}{\partial t_i},\ \ 
  \tilde{\Omega}_{ij}=\sum_{k=1}^3\frac{\partial^2 g_j}{\partial t_k\partial t_i}dt_k,\ \ 
  B_{\infty}=\mbox{diag}(w_1,w_2,w_3).
 \]
The condition (\ref{eq:addconPV}) implies that there is an invertible matrix $P$ such that
 \begin{equation} \label{eq:PVTsim}
   P^{-1}TP= 
   \begin{pmatrix} z_{1,0} & z_{1,1} & {} \\ {} & z_{1,0} & {} \\ {} & {} & z_{2,0} \end{pmatrix}.
 \end{equation}
Then, by the procedure explained in Appendix~\ref{app:compOkubo},
we can reduce the extended generalized Okubo system to an isomonodromic deformation 
of the following $2\times 2$ system of linear differential equations:
\begin{equation} \label{eq:rank2linPVOkubored}
\frac{d\bar{Y}}{d\bar{z}}=\left( \frac{\bar{A}_0}{\bar{z}-z_{2,0}}+\frac{\bar{A}_1}{(\bar{z}-z_{1,0})^2}+\frac{\bar{A}_2}{\bar{z}-z_{1,0}} \right)\bar{Y}.
\end{equation}
Change the variable $\bar{z}\to z=(\bar{z}-z_{2,0})/(z_{1,0}-z_{2,0})$.
Then (\ref{eq:rank2linPVOkubored}) is changed to (\ref{eq:rank2linPV}) satisfying the conditions (i)-(iii).
Hence we obtain a solution to (\ref{eq:sysH_V}),
which is equivalent to a solution to PV.
Particularly we have the following correspondence between parameters:
for the matrix $P$ in (\ref{eq:PVTsim}), put $B:=-P^{-1}B_{\infty}P$.
Then
\[
  t=\frac{z_{1,1}B_{21}}{z_{1,0}-z_{2,0}},\ \ 
  \theta^0=B_{33},\ \ \theta^1=B_{11}+B_{22},\ \ \theta^{\infty}_1=w_1-w_3,\ \ \theta^{\infty}_2=w_2-w_3.
\]

Next we start from an isomonodromic deformation of the system (\ref{eq:rank2linPV}) with the conditions (i)-(iii)
(which is equivalent to giving a generic solution to the Painlev\'e V equation).
The system (\ref{eq:rank2linPV}) can be transformed into a generalized Okubo system of rank three 
applying the procedure explained in Appendix~\ref{app:compOkubo} (cf. \cite{KawDT}):
\begin{equation} \label{eq:rank3linPV}
(z I_3-S_\mathrm{V})\frac{d\Psi}{dz}=C_\mathrm{V}\Psi,
\end{equation}
where
\begin{align*}
S_\mathrm{V}&=
\begin{pmatrix}
1 & 1 & 0 \\
0 & 1 & 0 \\
0 & 0 & 0
\end{pmatrix}, \\
C_\mathrm{V}&=
\begin{pmatrix}
\theta^1 & -\frac{1}{t}\det A_2 & -\frac{1}{t}(pq-\theta^1-\theta^\infty_2)\\
t & 0 & 1 \\
t(pq(q-1)-\theta^1-\theta^\infty_1-\theta^\infty_2 q) & (C_{\mathrm{V}})_{32} & \theta^0
\end{pmatrix},
\end{align*}
and
\[
(C_{\mathrm{V}})_{32}=
(q-1)(q(q-1)p^2+(\theta^\infty_2-\theta^\infty_1-(\theta^1+2\theta^\infty_2)q)p+\theta^\infty_2(\theta^1+\theta^\infty_2)).
\]
Note that $S_\mathrm{V}$ is regular in the sense of (A3) in Section~\ref{sec:okuboiso}.
Change the variables $z\to \bar{z}=(z_{1,0}-z_{2,0})z+z_{2,0}$ and $t\to z_{1,1}=(z_{1,0}-z_{2,0})t$.
Diagonalizing $C_{\mathrm{V}}$ by applying a gauge transformation to (\ref{eq:rank3linPV}),
we obtain from $S_\mathrm{V}$ a matrix $T$ satisfying (\ref{eq:PVTsim}). 
If the resulting matrix $T$ satisfies the condition (\ref{jacobian}) in Theorem~\ref{saitojacobian} (which is a generic condition),
 we obtain a flat coordinate system and a potential vector field
satisfying the condition (\ref{eq:addconPV}).
\end{proof}

We can obtain similar results on the remaining Painlev\'e equations except for Painlev\'e I:

\begin{proposition}[Painlev\'e IV]
 In the case of $N=3$, there is a correspondence between 
 generic solutions to the fourth Painlev\'e equation PIV and
 generic solutions to the extended WDVV equation $(\ref{eq:genWDVV2})$-$(\ref{eq:vphomo2})$ 
 satisfying the additional condition
  \begin{equation} \label{eq:addconPIV}
    \left(-(1+w_j-w_i)\displaystyle\frac{\partial g_j}{\partial t_i}\right)_{1\leq i,j\leq 3}\sim 
   \begin{pmatrix} z_{1,0} & z_{1,1} & z_{1,2} \\ {} & z_{1,0} & z_{1,1} \\ {} & {} & z_{1,0} \end{pmatrix}.
 \end{equation}
\end{proposition}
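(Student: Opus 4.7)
The plan is to follow the same scheme as the preceding proof for Painlev\'e V, but with the linear system and the Jordan type of $T$ adapted to the degeneration pattern characteristic of PIV. Concretely, $(\ref{eq:addconPIV})$ means $T$ is everywhere similar to a single Jordan block of rank $3$, so the underlying generalized Okubo system has all its finite singularities coalesced into one point, which on the $2\times 2$ side corresponds to a single irregular singular point of Poincar\'e rank $2$. This is exactly the Jimbo--Miwa Lax pair for PIV.

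First I would recall, following \cite{JM,KawDT}, the $2\times 2$ linear system whose isomonodromic deformation yields PIV:
\[
\frac{dY}{dz}=\left(A_2 z+A_1+\frac{A_0}{z}\right)Y,
\]
with a regular singular point at $z=0$ and an irregular singular point of Poincar\'e rank $2$ at $z=\infty$, together with the standard normalizations (diagonal exponent $T^{(0)}_{0}=\mathrm{diag}(\theta^{0},0)$ at $z=0$, and diagonal leading formal data $T^{(\infty)}(z)=\mathrm{diag}(-z^{2}/2-tz,0)+\mathrm{diag}(\theta^{\infty},0)\log z$ at $z=\infty$). A direct computation parametrizing $A_{0},A_{1},A_{2}$ by canonical variables $(q,p)$ shows that the isomonodromy equations in the deformation parameter $t$ are equivalent to the PIV Hamiltonian system, hence to PIV.

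For the forward direction I would start from a generic solution $\vec{g}=(g_{1},g_{2},g_{3})$ of $(\ref{eq:genWDVV2})$--$(\ref{eq:vphomo2})$ satisfying $(\ref{eq:addconPIV})$. By Proposition~\ref{prop:fromvptoSaito} this produces a $3\times 3$ extended generalized Okubo system with
\[
T_{ij}=-(1+w_{j}-w_{i})\frac{\partial g_{j}}{\partial t_{i}},\qquad \tilde{\Omega}_{ij}=\sum_{k=1}^{3}\frac{\partial^{2}g_{j}}{\partial t_{k}\partial t_{i}}\,dt_{k},\qquad B_{\infty}=\mathrm{diag}(w_{1},w_{2},w_{3}),
\]
with $T$ conjugate to the single-block form in $(\ref{eq:addconPIV})$. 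Applying the reduction of Appendix~\ref{app:compOkubo} (inverting Kawakami's construction) collapses this rank-$3$ system to a $2\times 2$ system of the above form; an affine change of the $z$-variable sending $z_{1,0}$ to $\infty$ and rescaling by $z_{1,1}$ normalizes the formal data so that the result is the PIV Lax pair, yielding a solution to PIV. Conversely, starting from an isomonodromic deformation of the PIV Lax pair, the construction in Appendix~\ref{app:compOkubo} produces a rank-$3$ generalized Okubo system with $S_{\mathrm{IV}}$ a single $3\times 3$ Jordan block; extending uniquely by Lemma~\ref{lem:isomono} and applying Theorem~\ref{saitojacobian} on the open set where the Jacobian condition $(\ref{jacobian})$ is satisfied gives a flat coordinate system $(t_{1},t_{2},t_{3})$ and a potential vector field whose associated $T$-matrix satisfies $(\ref{eq:addconPIV})$ by construction.

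The main obstacle I expect is the bookkeeping for this fully degenerate Jordan type. In the PV argument the decomposition $T\sim(z_{1,0}I_{2}+z_{1,1}\Lambda_{1})\oplus(z_{2,0})$ isolated the irregular part in a $2\times 2$ block and allowed an easy identification of the deformation parameter $t$ with $z_{1,1}/(z_{1,0}-z_{2,0})$; here the whole $3\times 3$ matrix is a single scalar-plus-nilpotent block, so one must track carefully how the three canonical coordinates $(z_{1,0},z_{1,1},z_{1,2})$ of Theorem~\ref{saitojacobian} split into the location of the irregular point (absorbed by the affine change of variable), the scale of the irregular formal invariant, and the PIV deformation parameter, and one must verify that the genericity condition $(\ref{jacobian})$ is really satisfied on a nonempty open set in this maximally confluent setting so that Theorem~\ref{saitojacobian} applies.
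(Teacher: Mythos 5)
Your proposal follows essentially the same route as the paper: identify condition (\ref{eq:addconPIV}) with $T$ being a single $3\times 3$ Jordan block, reduce the resulting rank-three extended generalized Okubo system to the $2\times 2$ PIV Lax pair via Appendix~\ref{app:compOkubo}, and conversely build from that Lax pair a generalized Okubo system with $S_{\mathrm{IV}}$ a single nilpotent Jordan block and invoke Theorem~\ref{saitojacobian} under the genericity condition (\ref{jacobian}). The only (cosmetic) difference is that the paper writes the PIV system as $\frac{dY}{dz}=\bigl(\frac{A_0}{z^3}+\frac{A_1}{z^2}+\frac{A_2}{z}\bigr)Y$ with the rank-two irregular point at $z=0$ and the regular singularity at $\infty$ --- the form the Okubo reduction produces directly after an affine change of $z$ --- so no transformation ``sending $z_{1,0}$ to $\infty$'' (which in any case could not be affine) is needed.
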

 
 \begin{proof}
 The Painlev\'e IV equation is derived from the isomonodromic deformation of the following $2\times 2$
 system of linear differential equations which has a regular singular point at $z=\infty$ and
 an irregular singular point of Poincar\'e rank 2 at $z=0$:
 \begin{equation} \label{eq:rank2linPIV}
 \frac{dY}{dz}=\left( \frac{A_0}{z^3}+\frac{A_1}{z^2}+\frac{A_2}{z} \right)Y
 \end{equation}
 where
 \begin{align*}
 A_0&=
 \begin{pmatrix}
 u & 0 \\
 0 & 1
 \end{pmatrix}^{-1}
 \left(
 \frac{1}{\theta^\infty_{12}}
 \begin{pmatrix}
 p \\
 -pq+\theta^\infty_{12}
 \end{pmatrix}
 \begin{pmatrix}
 q & 1
 \end{pmatrix}
 \right)
 \begin{pmatrix}
 u & 0 \\
 0 & 1
 \end{pmatrix}, \\
 A_1&=
 \begin{pmatrix}
 u & 0 \\
 0 & 1
 \end{pmatrix}^{-1}
 \left(
 \frac{1}{\theta^\infty_{12}}
 \begin{pmatrix}
 pq(p-q-t)-\theta^\infty_{12}p+\theta^\infty_1 q & p(p-q-t)+\theta^\infty_1 \\
 (pq-\theta^\infty_{12})(-pq+tq+\theta^\infty_{12})+(pq-\theta^\infty_1)q^2 & \theta^\infty_{12}(t-(A_1)_{11})
 \end{pmatrix}
 \right)
 \begin{pmatrix}
 u & 0 \\
 0 & 1
 \end{pmatrix}, \\
 A_2&=-
 \begin{pmatrix}
 \theta^\infty_1 & 0 \\
 0 & \theta^\infty_2
 \end{pmatrix},
 \end{align*}
 and $\theta^\infty_{12}:=\theta^\infty_1-\theta^\infty_2$.
 The Painlev\'e IV equation is equivalent to the Hamiltonian system with the Hamiltonian
 \begin{equation}
 H_\mathrm{IV}=pq(p-q-t)+(\theta^\infty_2-\theta^\infty_1)p-(\theta^0+\theta^\infty_2)q.
 \end{equation}
 
 We start from a solution to the extended WDVV equation (\ref{eq:genWDVV2})-(\ref{eq:vphomo2}) 
satisfying the condition (\ref{eq:addconPIV}).
 In a way similar to the PV case, we can reduce the $3\times 3$ generalized Okubo system
 to the $2\times 2$ system (\ref{eq:rank2linPIV}),
 and thus we have a solution to the Painlev\'e IV equation.

 Conversely, starting from an isomonodromic deformation of the $2\times 2$ system (\ref{eq:rank2linPIV}),
 we can construct a $3\times 3$ generalized Okubo system  
 similarly to the case of PV (cf. \cite{KawDT,Kaw}):
 \begin{equation} \label{eq:rank3linPIV}
 (z I_3-S_\mathrm{IV})\frac{d\Psi}{dz}=C_\mathrm{IV}\Psi,
 \end{equation}
 where
 \begin{align*}
 S_\mathrm{IV}&=
 \begin{pmatrix}
 0 & 1 & 0 \\
 0 & 0 & 1 \\
 0 & 0 & 0
 \end{pmatrix}, \\
 C_\mathrm{IV}&=
 \begin{pmatrix}
 0 & (q+t)(pq-\theta^\infty_1) & -p(q+t)(pq-\theta^\infty_1+\theta^\infty_2) \\
 0 & pq-\theta^\infty_1 & -p(pq-\theta^\infty_1+\theta^\infty_2) \\
 1 & -t & -pq-\theta^\infty_2
 \end{pmatrix}.
 \end{align*}
 Noting that $S_\mathrm{IV}$ satisfies the regularity condition which is equivalent to (\ref{eq:addconPIV}),
 we obtain a potential vector field satisfying the condition (\ref{eq:addconPIV})
 by a manner similar to the PV case.
\end{proof}
 
\begin{proposition}[Painlev\'e III]
 In the case of $N=4$, there is a correspondence 
 between generic solutions to the third Painlev\'e equation PIII
 and generic solutions to the extended WDVV equation 
  \begin{gather}
  \sum_{m=1}^4\frac{\partial^2 g_m}{\partial t_k\partial t_i}\frac{\partial^2 g_j}{\partial t_l\partial t_m}=
  \sum_{m=1}^4\frac{\partial^2 g_m}{\partial t_l\partial t_i}\frac{\partial^2 g_j}{\partial t_k\partial t_m},
\ \ i,j,k,l=1,2,3,4, \label{eq:genWDVV2,N=4} \\
  \frac{\partial ^2 g_j}{\partial t_4\partial t_i}=\delta_{ij},\ \ \ \ i,j=1,2,3,4, \label{eq:vpunit2,N=4} \\
  Eg_j=\sum_{k=1}^4w_kt_k\frac{\partial g_j}{\partial t_k}=(1+w_j)g_j,\ \ \ j=1,2,3,4 \label{eq:vphomo2,N=4}
 \end{gather}
 with the constraint on the weight $w_1=w_2, w_3=w_4$ and the additional condition
  \begin{equation} \label{eq:addconPIII}
     \left(-(1+w_j-w_i)\displaystyle\frac{\partial g_j}{\partial t_i}\right)_{1\leq i,j\leq 4}\sim 
     \begin{pmatrix} z_{1,0} & z_{1,1} & {} & {}\\ {} & z_{1,0} & {} & {} \\ {} & {} & z_{2,0} & z_{2,1} \\ {} & {} & {} & z_{2,0} \end{pmatrix}.
 \end{equation}
\end{proposition}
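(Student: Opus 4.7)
The proof will follow the same two-step strategy as the preceding propositions for PV and PIV, now adapted to the four-dimensional setting in which $T$ has two Jordan blocks of size two. The starting point is to recall that the Painlev\'e III equation is derived from the isomonodromic deformation of a $2\times 2$ system of linear differential equations on $\mathbb{P}^1$ having two irregular singular points of Poincar\'e rank $1$ (and no other singularities). I would first write down the Jimbo-Miwa normal form of this $2\times 2$ system, place its singularities at $z=0$ and $z=\infty$, and recall that after the usual parametrization the unknowns $(q,p)$ satisfy the Hamiltonian system equivalent to PIII.

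Forward direction: starting from a solution $\vec{g}=(g_1,\ldots,g_4)$ of (\ref{eq:genWDVV2,N=4})--(\ref{eq:vphomo2,N=4}) with the weight constraint $w_1=w_2,\,w_3=w_4$ and satisfying (\ref{eq:addconPIII}), Proposition~\ref{prop:fromvptoSaito} gives a regular Saito structure on a neighborhood and hence an extended generalized Okubo system of rank $4$ with
\[
T_{ij}=-(1+w_j-w_i)\frac{\partial g_j}{\partial t_i},\quad \tilde{\Omega}_{ij}=\sum_{k=1}^{4}\frac{\partial^2 g_j}{\partial t_k\partial t_i}\,dt_k,\quad B_{\infty}=\mathrm{diag}(w_1,w_2,w_3,w_4).
\]
Because of (\ref{eq:addconPIII}) the matrix $T$ has Jordan type $(2,2)$, so the reduction procedure of Appendix~\ref{app:compOkubo} applies and collapses the rank-$4$ system to a $2\times 2$ linear system with exactly two irregular singularities of Poincar\'e rank $1$; after the affine change $\bar z\mapsto z$ sending $z_{1,0},z_{2,0}$ to $\infty,0$, this is the PIII linear system. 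This yields a solution of PIII together with an explicit correspondence of parameters analogous to the PV case ($z_{1,1},z_{2,1}$ become the essentially unique deformation parameter, and the off-diagonal entries of $-P^{-1}B_{\infty}P$ record the exponents $\theta^{0},\theta^{\infty}$).

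Reverse direction: starting from a generic isomonodromic family of the PIII $2\times 2$ system, Kawakami's construction (Appendix~\ref{app:compOkubo}, cf. \cite{KawDT,Kaw}) produces a rank-$4$ generalized Okubo system
\[
(zI_4-S_{\mathrm{III}})\frac{d\Psi}{dz}=C_{\mathrm{III}}\Psi
\]
whose leading matrix $S_{\mathrm{III}}$ has Jordan type $(2,2)$ (reflecting the two Poincar\'e-rank-$1$ singularities). After diagonalizing $C_{\mathrm{III}}$ by a gauge transformation, assuming the genericity condition (\ref{jacobian}) of Theorem~\ref{saitojacobian} holds, we obtain a flat coordinate system $t_j=-(\lambda_j-\lambda_N+1)^{-1}T_{Nj}$ and a potential vector field satisfying (\ref{eq:addconPIII}).

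The substantive point to verify, and the main potential obstacle, is the weight constraint $w_1=w_2,\,w_3=w_4$. This should be traced to the pairing of eigenvalues of $B_{\infty}$ that accompanies each Jordan block of $T$ in Kawakami's conversion procedure: at each Poincar\'e-rank-$1$ irregular singularity of the $2\times 2$ system, only two scalar exponents $(\theta,0)$ enter the formal data, and in the Okubo-side presentation these must be promoted to the eigenvalue pair of $B_{\infty}$ restricted to the corresponding $2\times 2$ block in such a way that both entries become equal (the residual freedom $B_{\infty}\mapsto B_{\infty}-\lambda I_N$ of Remark~\ref{rm:henkan} being fixed by the choice of primitive section). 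The remaining work is then a direct but careful bookkeeping of the parameter dictionary between $(p,q,t,\theta^{0},\theta^{\infty})$ and $(g_1,\ldots,g_4,w_1,w_3)$, entirely parallel to the PV computation.
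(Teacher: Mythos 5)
Your overall two-step strategy (reduce a rank-four regular extended generalized Okubo system with Jordan type $(2,2)$ to a $2\times 2$ system, and conversely apply Kawakami's construction from Appendix~\ref{app:compOkubo}) matches the paper's, but there is a concrete gap exactly where the PIII linear problem meets the Okubo framework. The PIII $2\times2$ system has two irregular singular points of Poincar\'e rank $1$ and \emph{no regular singular point}, whereas any generalized Okubo system $(zI_N-T)\frac{dY}{dz}=-B_\infty Y$ necessarily has a regular singular point at $z=\infty$ with residue $B_\infty$. Consequently the reduction of the rank-four system in your forward direction produces a $2\times2$ system with \emph{three} singular points (two irregular ones at $z_{1,0},z_{2,0}$ plus a regular one at $\infty$), not two, and your proposed ``affine change sending $z_{1,0},z_{2,0}$ to $\infty,0$'' cannot work: an affine map fixes $\infty$, and a M\"obius map would merely move the unavoidable regular singularity to a finite point. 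The paper resolves this by first transforming the PIII system via $z\mapsto\xi=z/(z-1)$ and $Y\mapsto Z=(\xi-1)^{\theta_2^\infty}Y$, which artificially creates a regular singular point at $\xi=\infty$ with \emph{scalar} residue $\theta_2^\infty I_2$; it is this transformed system that is matched with the rank-four Okubo system, and the scalar residue can be removed again by the inverse gauge twist, which is what closes the correspondence.

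This omission also undermines your explanation of the weight constraint, which you correctly flag as the main point to verify but then resolve incorrectly. In Kawakami's construction $B_\infty=\mathrm{diag}(\lambda_1,\dots,\lambda_m,0,\dots,0)$, where $\mathrm{diag}(\lambda_1,\dots,\lambda_m)$ is the residue of the $2\times2$ system at its regular singular point; the eigenvalues of $B_\infty$ are not ``paired with the Jordan blocks of $T$'' (the Jordan blocks of $S_{\mathrm{III}}$ encode the positions and Poincar\'e ranks of the finite singularities, not the spectrum of $B_\infty$). The constraint $w_1=w_2$ holds precisely because the regular singularity was created by a scalar gauge transformation, so its residue is $\theta_2^\infty I_2$ and hence $B_\infty=\mathrm{diag}(\theta_2^\infty,\theta_2^\infty,0,0)$, while $w_3=w_4=0$ is automatic from the construction with $N-m=2$. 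Once the M\"obius/gauge step is inserted and the constraint is derived this way, the rest of your argument (regularity of $S_{\mathrm{III}}$ being equivalent to (\ref{eq:addconPIII}), the genericity condition (\ref{jacobian}), and Theorem~\ref{saitojacobian}) goes through as in the PV case.
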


 \begin{proof}
    The Painlev\'e III equation is derived from the isomonodromic deformation 
    of the following $2\times 2$ system of linear differential equations 
    which has irregular singular points of Poincar\'e rank $1$ at $z=0,\infty$:
     \begin{equation} \label{eq:rank2linPIII}
    \frac{dY}{dz}=\left(\frac{A_2}{z^2}+\frac{A_1}{z}+A_0\right)Y,
  \end{equation}
  where 
  \begin{align*}
    &A_0=\begin{pmatrix} -1 & 0 \\ 0 & 0 \end{pmatrix},\ \ \ 
    A_1=\begin{pmatrix} u & 0 \\ 0 & 1 \end{pmatrix}^{-1}
             \begin{pmatrix} -\theta_1^{\infty} & -q \\ -r & -\theta_2^{\infty} \end{pmatrix}
             \begin{pmatrix} u & 0 \\ 0 & 1 \end{pmatrix},  \\
     &A_2=\begin{pmatrix} u & 0 \\ 0 & 1 \end{pmatrix}^{-1}\begin{pmatrix} 1 \\ p \end{pmatrix}
             \begin{pmatrix} t(1-p) & t \end{pmatrix}\begin{pmatrix} u & 0 \\ 0 & 1 \end{pmatrix}
  \end{align*}
  and $r=(pq-\theta_2^{\infty})(p-1)+\theta_1^{\infty}p$.
  The Painlev\'e III equation is equivalent to the Hamilton system with the Hamiltonian
  \[
    tH_\mathrm{III}=p^2q^2-(q^2-(\theta_1^{\infty}-\theta_2^{\infty})q-t)p+\theta_2^{\infty}q.
  \]
  Note that (\ref{eq:rank2linPIII}) has no regular singular point,
  whereas any generalized Okubo system necessarily has a regular singular point at $\infty$.
  We change the variables $z\rightarrow \xi=\frac{z}{z-1}$ and $Y\rightarrow Z=(\xi-1)^{\theta_2^{\infty}}Y$
  in order to  add a regular singularity at $\infty$ to (\ref{eq:rank2linPIII}).
  Then (\ref{eq:rank2linPIII}) is changed into
  \begin{equation} \label{eq:rank2linPIIIZ}
    \frac{dZ}{d\xi}=\left(-\frac{A_2}{\xi^2}+\frac{A_1}{\xi}-\frac{A_0}{(\xi-1)^2}-\frac{A_1+\theta_2^{\infty}I_2}{\xi-1}\right)Z,
  \end{equation}
  which has irregular singular points of Poincar\'e rank $1$ at $\xi=0,1$ 
  and a regular singular point at $\xi=\infty$.
  The residue matrix of (\ref{eq:rank2linPIIIZ}) at $\xi=\infty$ reads $\theta_2^\infty I_2$.
  
  We start from a solution to the extended WDVV equation $(\ref{eq:genWDVV2,N=4})$-$(\ref{eq:vphomo2,N=4})$
  satisfying the condition (\ref{eq:addconPIII}) with the constraint $w_1=w_2, w_3=w_4$.
  Then we have a $4\times 4$ extended generalized Okubo system with $B_{\infty}=\mbox{diag}(w_1,w_1,w_3,w_3)$.
  Applying the procedure explained in Appendix~\ref{app:compOkubo},
  we can reduce the $4\times 4$ generalized Okubo system to the $2\times 2$ system (\ref{eq:rank2linPIIIZ}), 
  which is equivalent to (\ref{eq:rank2linPIII}).
  Hence we obtain a solution to the Painlev\'e III equation.
  
  Conversely, we start from an isomonodromic deformation of the $2\times 2$ system (\ref{eq:rank2linPIII}),
  which is equivalent to an isomonodromic deformation of (\ref{eq:rank2linPIIIZ}).
  We can construct from the $2\times 2$ system (\ref{eq:rank2linPIIIZ}) a $4\times 4$ generalized Okubo system by the procedure 
  explained in Appendix \ref{app:compOkubo},
  which is determined by the following data consisting of three matrices 
  $\{S_\mathrm{III}, G_\mathrm{III}, B_{\infty}\}$:
  \begin{align*}
    &S_\mathrm{III}=\begin{pmatrix} 0 & 1 & 0 & 0 \\ 0 & 0 & 0 & 0 \\ 0 & 0 & 1 & 1 \\ 0 & 0 & 0 & 1 \end{pmatrix},\ \ 
    G_\mathrm{III}=\begin{pmatrix} \theta_1^{\infty}u & q & -pq-\theta_1^{\infty}+\theta_2^{\infty} & \frac{q(pq-\theta_2^{\infty})}{t} \\ 
                  (1-p)tu & t &-t & pq \\
                  0 & q/u & -pq/u & \frac{q(pq-\theta_2^{\infty})}{tu}  \\
                  1 & 0 & -1/u & 0 \end{pmatrix}, \\
     &B_{\infty}=\mbox{diag}(\theta_2^{\infty},\theta_2^{\infty},0,0).
  \end{align*}
  Note that $S_\mathrm{III}$ satisfies the regularity condition which is equivalent to the condition (\ref{eq:addconPIII}),
  and $B_{\infty}$ satisfies the constraint $w_1=w_2, w_3=w_4$.
  Hence we obtain a potential vector field satisfying the desired conditions.
 \end{proof}
 
\begin{proposition}[Painlev\'e II]
 In the case of $N=4$, there is a correspondence 
 between generic solutions to the second Painlev\'e equation PII
 and generic solutions to the extended WDVV equation $(\ref{eq:genWDVV2,N=4})$-$(\ref{eq:vphomo2,N=4})$ 
 with the constraint on the weight $w_1=w_2, w_3=w_4$ and the additional condition
  \begin{equation} \label{eq:addconPII}
     \left(-(1+w_j-w_i)\displaystyle\frac{\partial g_j}{\partial t_i}\right)_{1\leq i,j\leq 4}\sim 
   \begin{pmatrix} z_{1,0} & z_{1,1} & z_{1,2} & z_{1,3} \\ {} & z_{1,0} & z_{1,1} & z_{1,2} \\ {} & {} & z_{1,0} & z_{1,1} \\ {} & {} & {} & z_{1,0}
   \end{pmatrix}.
 \end{equation}
\end{proposition}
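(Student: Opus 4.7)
The plan is to follow precisely the template used in the Painlev\'e III case, since PII is structurally closest to PIII: both are derived from isomonodromic deformations of $2\times 2$ systems whose singularity data is ``most degenerate'' among those yielding Okubo systems of rank $4$. The relevant linear system for PII (following \cite{JM,KawDT}) has the form
\begin{equation*}
\frac{dY}{dz}=\bigl(A_0+A_1 z+A_2 z^2\bigr)Y,
\end{equation*}
with a single irregular singular point at $z=\infty$ of Poincar\'e rank $3$. Since this system has no regular singular point, whereas every generalized Okubo system must have a regular singularity at $\infty$, we will first change variables and apply a gauge transformation (exactly as was done for (\ref{eq:rank2linPIII}) $\to$ (\ref{eq:rank2linPIIIZ})) in order to introduce an artificial regular singular point. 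The leading term of the gauge factor will be scalar, so the resulting residue matrix at the new regular singularity will be a scalar multiple of $I_2$; this is the geometric origin of the constraint $w_1=w_2$, $w_3=w_4$ on $B_\infty$.

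Starting from a solution of the extended WDVV equation (\ref{eq:genWDVV2,N=4})--(\ref{eq:vphomo2,N=4}) with the constraint $w_1=w_2$, $w_3=w_4$ and the Jordan-form condition (\ref{eq:addconPII}), Theorem~\ref{saitojacobian} together with the block structure (\ref{eq:taikakuBi}) produces a $4\times 4$ extended generalized Okubo system whose $T$ is similar to a single $4\times 4$ Jordan block and whose $B_\infty$ has the required pairing of eigenvalues. I would then run Kawakami's reduction procedure from Appendix~\ref{app:compOkubo} to collapse this $4\times 4$ system down to a $2\times 2$ system on $\mathbb{P}^1$; the Jordan structure (size-$4$ block with two pairs of equal $w_i$) forces the reduced system to have exactly the singular structure of (the transformed) PII linear problem, and thus yields a solution of PII.

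For the converse direction, start from an isomonodromic deformation of the PII linear system. After the preliminary change of variable that introduces the regular singular point at $\infty$, apply the construction of Appendix~\ref{app:compOkubo} (as in \cite{KawDT,Kaw}) to obtain a $4\times 4$ generalized Okubo system specified by a triple $\{S_{\mathrm{II}},G_{\mathrm{II}},B_\infty\}$, where $S_{\mathrm{II}}$ is (similar to) the $4\times 4$ single Jordan block $z_{1,0}I_4+\Lambda_1$ and $B_\infty=\mathrm{diag}(\lambda_1,\lambda_1,\lambda_2,\lambda_2)$ (the doubling being inherited from the scalar residue introduced in the preliminary gauge transformation). Assuming the generic condition (\ref{jacobian}) of Theorem~\ref{saitojacobian}, this produces a flat coordinate system and a potential vector field satisfying (\ref{eq:addconPII}) together with $w_1=w_2$, $w_3=w_4$.

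The main technical obstacle will be verifying the precise Jordan form of $T$: one has to check that the confluence pattern of the PII linear problem produces a single size-$4$ Jordan block rather than, say, two $2\times 2$ blocks as in the PIII case. This is the reflection, on the Okubo side, of the fact that PII sits at the bottom of the coalescence cascade (modulo PI). Concretely, the ``diagonal part'' of the highest-order irregular data $A_2$ for PII is nilpotent with a single block, and one must trace through Appendix~\ref{app:compOkubo} to see that this nilpotent structure is preserved (and produces the $4\times 4$ Jordan block) under Kawakami's transformation. Once this compatibility is established, the rest of the argument is a direct transcription of the PIII proof, with the Jordan form (\ref{eq:addconPIII}) replaced by (\ref{eq:addconPII}).
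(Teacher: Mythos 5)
Your proposal is correct and follows essentially the same route as the paper: change variables $z\to\xi=1/z$ with the scalar gauge $Y\to Z=\xi^{\theta_2^\infty}Y$ to create a regular singularity at $\infty$ with scalar residue (whence $w_1=w_2$, $w_3=w_4$), then pass back and forth via Appendix~\ref{app:compOkubo} and Theorem~\ref{saitojacobian}, the paper simply exhibiting the explicit triple $\{S_{\mathrm{II}},G_{\mathrm{II}},B_\infty\}$ with $S_{\mathrm{II}}$ a single nilpotent $4\times4$ Jordan block. One small correction to your heuristic: the single size-$4$ block arises because the transformed system has a single finite singular point of pole order $4$ (Poincar\'e rank $3$), i.e.\ from the pole data entering the realization $-B(z-S)^{-1}C$, not from nilpotency of the leading coefficient (which for PII is $\mathrm{diag}(0,1)$).
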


  \begin{proof}
        The Painlev\'e II equation is derived from isomonodromic deformations of the following $2\times 2$ system of linear differential equations 
    which has an irregular singular point of Poincar\'e rank $3$ at $z=\infty$:
%
\begin{equation} \label{eq:rank2linPII}
\frac{dY}{dz}=\left( A_0 z^2+A_1 z+A_2 \right)Y,
\end{equation}
where
\begin{align*}
&A_0=
\begin{pmatrix}
0 & 0 \\
0 & 1
\end{pmatrix},\quad
A_1=
\begin{pmatrix}
u & 0 \\
0 & 1
\end{pmatrix}^{-1}
\begin{pmatrix}
0 & 1 \\
p & 0
\end{pmatrix}
\begin{pmatrix}
u & 0 \\
0 & 1
\end{pmatrix},\\
&A_2=
\begin{pmatrix}
u & 0 \\
0 & 1
\end{pmatrix}^{-1}
\begin{pmatrix}
p & -q \\
pq-\theta^\infty_2 & -p+t
\end{pmatrix}
\begin{pmatrix}
u & 0 \\
0 & 1
\end{pmatrix}.
\end{align*}
The Painlev\'e II equation is equivalent to the Hamiltonian system with the Hamiltonian
\begin{equation}
H_\mathrm{II}=p^2-(q^2+t)p+\theta^\infty_2 q.
\end{equation}

By a reason similar to PIII,
we change the variables $z \to \xi=1/z$ and $Y \to Z=\xi^{\theta^\infty_2}Y$.
Then (\ref{eq:rank2linPII}) is changed into
\begin{equation} \label{eq:rank2linPIIZ}
\frac{dZ}{d\xi}=\left(
-\frac{A_0}{\xi^4}-\frac{A_1}{\xi^3}-\frac{A_2}{\xi^2}+\frac{\theta^\infty_2 I_2}{\xi}
\right)Z,
\end{equation}
  which has an irregular singular point of Poincar\'e rank $3$ at $\xi=0$ and a regular singular point at $\xi=\infty$.
  The residue matrix of (\ref{eq:rank2linPIIZ}) at $\xi =\infty$ reads $-\theta_2^\infty I_2$.
  
  It is similar to PIII that, starting a solution to the extended WDVV equation $(\ref{eq:genWDVV2,N=4})$-$(\ref{eq:vphomo2,N=4})$
  satisfying the condition (\ref{eq:addconPII}) with the constraint $w_1=w_2, w_3=w_4$,
  we obtain a solution to the Painlev\'e II equation.
  
  Conversely, we start from an isomonodromic deformation of the $2\times 2$ system (\ref{eq:rank2linPII}),
  which is equivalent to an isomonodromic deformation of (\ref{eq:rank2linPIIZ}).
  We can construct from the $2\times 2$ system (\ref{eq:rank2linPIIZ}) a $4\times 4$ generalized Okubo system by the procedure 
  explained in Appendix \ref{app:compOkubo},
  which is determined by the following data consisting of three matrices $\{S_\mathrm{II}, G_\mathrm{II}, B_{\infty}\}$:
  \begin{align*}
S_\mathrm{II}&=
\begin{pmatrix}
0 & 1 & 0 & 0 \\
0 & 0 & 1 & 0 \\
0 & 0 & 0 & 1 \\
0 & 0 & 0 & 0
\end{pmatrix}, \quad
G_\mathrm{II}=
\begin{pmatrix}
-qu & \frac{q}{\theta^\infty_2}(q^2-p+t)+1 & 0 & q(p-q^2-t) \\
u & \frac{1}{\theta^\infty_2}(p-q^2-t) & 0 & q^2-p+t \\
-\frac{pu}{\theta^\infty_2} & q/\theta^\infty_2 & 1 & 0 \\
0 & -1/\theta^\infty_2 & 0 & 1
\end{pmatrix}, \\
B_\infty&=\mathrm{diag}(\theta^\infty_2, \theta^\infty_2, 0, 0).
\end{align*}
 Note that $S_\mathrm{II}$ satisfies the regularity condition which is equivalent to the condition (\ref{eq:addconPII}),
  and $B_{\infty}$ satisfies the constraint $w_1=w_2, w_3=w_4$.
Hence we obtain a potential vector field satisfying the desired conditions.
%
  \end{proof}

 We may summarize the three-dimensional case as follows.
 
\begin{theorem} \label{thm:extWDVVandPVIPVPIV}
  There is a correspondence between generic solutions satisfying the regularity condition
   to the extended WDVV equation
   \begin{gather}
  \sum_{m=1}^3\frac{\partial^2 g_m}{\partial t_k\partial t_i}\frac{\partial^2 g_j}{\partial t_l\partial t_m}=
  \sum_{m=1}^3\frac{\partial^2 g_m}{\partial t_l\partial t_i}\frac{\partial^2 g_j}{\partial t_k\partial t_m},
\ \ i,j,k,l=1,2,3,  \\
  \frac{\partial ^2 g_j}{\partial t_3\partial t_i}=\delta_{ij},\ \ \ \ i,j=1,2,3,  \\
  Eg_j=\sum_{k=1}^3w_kt_k\frac{\partial g_j}{\partial t_k}=(1+w_j)g_j,\ \ \ j=1,2,3 
 \end{gather}
  and generic solutions to the Painlev\'e equations PVI,PV,PIV,
  where ``the regularity condition'' means that $\vec{g}$ satisfies one of $(\ref{eq:regcondPVI}),(\ref{eq:addconPV}),(\ref{eq:addconPIV})$.
\end{theorem}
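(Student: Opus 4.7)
The plan is to assemble Theorem~\ref{thm:extWDVVandPVIPVPIV} directly from the three preceding propositions. The key observation is that for a $3\times 3$ matrix which is regular in the sense of (A3), there are exactly three possible Jordan normal forms, parametrized by the partitions $(1,1,1)$, $(2,1)$, $(3)$ of $3$, and these correspond respectively to the three regularity conditions $(\ref{eq:regcondPVI})$, $(\ref{eq:addconPV})$, $(\ref{eq:addconPIV})$ imposed on the matrix
\[
   \left(-(1+w_j-w_i)\tfrac{\partial g_j}{\partial t_i}\right)_{1\leq i,j\leq 3}.
\]
Thus the regularity condition mentioned in the statement is simply the requirement that this $3\times 3$ matrix belong to one of the three regular conjugacy classes, and the case distinction automatically labels which of PVI, PV, PIV the resulting solution solves.

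First I would take a generic solution $\vec{g}=(g_1,g_2,g_3)$ of the $3$-dimensional extended WDVV equation and invoke Proposition~\ref{prop:fromvptoSaito} to produce the associated Saito structure on an open set of $\mathbb{C}^3$. The Saito bundle $(\mathbb{C}^3(U),\knabla,\tilde{\Phi},\mathcal{T},-\mathcal{B}_\infty)$ induced from this Saito structure (as noted after Lemma~\ref{lem:trivial}) is then an extended generalized Okubo system with $T_{ij}=-(1+w_j-w_i)\partial g_j/\partial t_i$ and $B_\infty=\mathrm{diag}(w_1,w_2,w_3)$. Depending on which of $(\ref{eq:regcondPVI})$, $(\ref{eq:addconPV})$, $(\ref{eq:addconPIV})$ is satisfied, I would apply the reduction procedure of Appendix~\ref{app:compOkubo} to obtain a $2\times 2$ Fuchsian system with the singularity structure appropriate to PVI, PV, or PIV respectively, yielding a solution to the corresponding Painlevé equation via its standard isomonodromic presentation.

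Conversely, starting from a generic solution to PVI, PV, or PIV, I would use the explicit $3\times 3$ extended generalized Okubo forms displayed in the propositions above (Proposition~\ref{thm:genPVI} for PVI, and the systems built around $S_{\mathrm{V}}$, $C_{\mathrm{V}}$ and $S_{\mathrm{IV}}$, $C_{\mathrm{IV}}$ for PV and PIV). By construction, the matrix $T$ of these systems has Jordan type dictated by the Jordan type of $S_{\mathrm{V}}$, $S_{\mathrm{IV}}$, which reproduces the relevant regularity condition $(\ref{eq:regcondPVI})$, $(\ref{eq:addconPV})$, or $(\ref{eq:addconPIV})$. Then Theorem~\ref{saitojacobian} produces the flat coordinates $t_j=-(\lambda_j-\lambda_3+1)^{-1}T_{3j}$ and, through Proposition~\ref{prop:potential vector field}, a potential vector field $\vec{g}$ solving the extended WDVV equation with the required regularity condition.

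The main obstacle is not a new technical computation---the three cases have been established individually---but rather a careful bookkeeping of what ``generic'' means on each side so that the correspondence is actually a bijection on the generic locus. On the WDVV side, genericity amounts to the Jacobian condition $(\ref{jacobian})$ of Theorem~\ref{saitojacobian} together with regularity of $-\Phi(E)$; on the Painlevé side, it amounts to avoiding the loci where the reduction of Appendix~\ref{app:compOkubo} degenerates (e.g.\ apparent singularities coalescing, or the gauge $P$ failing to be invertible). Once it is checked that each of these exceptional loci is a proper analytic subset, the construction is reversible and Theorem~\ref{thm:extWDVVandPVIPVPIV} follows by combining the three propositions.
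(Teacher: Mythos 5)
Your proposal is correct and follows essentially the same route as the paper: Theorem~\ref{thm:extWDVVandPVIPVPIV} is obtained there simply by combining Proposition~\ref{thm:genPVI} and the PV and PIV propositions, with the observation that the three conditions $(\ref{eq:regcondPVI})$, $(\ref{eq:addconPV})$, $(\ref{eq:addconPIV})$ exhaust the regular Jordan types of a $3\times 3$ matrix. Your sketch of the forward and backward directions (reduction via Appendix~\ref{app:compOkubo} one way, Theorem~\ref{saitojacobian} and the potential vector field the other way) matches the arguments given in the individual propositions.
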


\begin{remark}
 A. Arsie and P. Lorenzoni \cite{AL0,Lo,AL1} proved three-dimensional regular bi-flat $F$-manifolds
 are parameterized by solutions to PVI, PV, PIV.
 Theorem~\ref{thm:extWDVVandPVIPVPIV} provides another proof of it.
\end{remark}

The first Painlev\'e equation PI is derived from the isomonodromic deformation of the following $2\times 2$ system
of linear differential equations:
\begin{equation} \label{eq:rank2linPI}
\frac{dY}{dz}=\left( A_0 z^2+A_1 z+A_2 \right)Y,
\end{equation}
where
\begin{align*}
&A_0=
\begin{pmatrix}
0 & 1 \\
0 & 0
\end{pmatrix},\quad
A_1=
\begin{pmatrix}
0 & q \\
1 & 0
\end{pmatrix},\quad
A_2=
\begin{pmatrix}
-p & q^2+t \\
-q & p
\end{pmatrix}, 
\end{align*}
which has an irregular singular point of Poincar\'e rank $5/2$ at $z=\infty$.
PI is equivalent to the Hamiltonian system with the Hamiltonian $H_\mathrm{I}=p^2-q^3-tq$.

Similarly to the case of PII, we change the variables of (\ref{eq:rank2linPI}) as $z \to \xi=1/z$ and $Y \to Z=\xi^{-\lambda}Y$:
\begin{equation} \label{eq:rank2linPIZ}
\frac{dZ}{d\xi}=\left(
-\frac{A_0}{\xi^4}-\frac{A_1}{\xi^3}-\frac{A_2}{\xi^2}-\frac{\lambda I_2}{\xi}
\right)Z.
\end{equation}
We see that (\ref{eq:rank2linPIZ}) is transformed into the following $7\times 7$ generalized Okubo system
by the procedure explained in Appendix \ref{app:compOkubo} 
(which is a realization of  (\ref{eq:rank2linPIZ}) as a generalized Okubo system of minimal size):
\begin{align*}
S_\mathrm{I}&=
\begin{pmatrix}
0 & 1 & 0 & 0 \\
0 & 0 & 1 & 0 \\
0 & 0 & 0 & 1 \\
0 & 0 & 0 & 0
\end{pmatrix}
\oplus
\begin{pmatrix}
0 & 1 & 0 \\
0 & 0 & 1 \\
0 & 0 & 0
\end{pmatrix}, \\
G_\mathrm{I}&=
\begin{pmatrix}
\lambda & 0 & 0 & 0 & 0 & 0 & 0 \\
-p & q^2+t & p & \lambda & 0 & -q^2-t & 0 \\
0 & q & 0 & 0 & \lambda & -q & 0 \\
0 & 1 & 0 & 0 & 0 & -1 & 0 \\
0 & \lambda & 0 & 0 & 0 & 0 & 0 \\
-q & p & q & 0 & 0 & -p & \lambda \\
1 & 0 & -1 & 0 & 0 & 0 & 0
\end{pmatrix}, \\
B_\infty&=\mathrm{diag}(\lambda, \lambda, 0, 0, 0, 0, 0).
\end{align*}
We notice that $S_\mathrm{I}$ is not regular and thus we can not treat PI in the framework of the present paper.

\section{Unified treatment of the Painlev\'e equations} \label{sec:coalcas}

In the previous section, we related each of the Painlev\'e equations with an extended generalized Okubo system,
which is of minimal rank.
As for PVI, PV and PIV, it is possible to relate them to extended generalized Okubo systems of rank four.

\begin{proposition}[PVI]
In the case of $N=4$, there is a correspondence 
between generic solutions to the Painlev\'e VI equation
and generic solutions to the extended WDVV equation $(\ref{eq:genWDVV2,N=4})$-$(\ref{eq:vphomo2,N=4})$ 
with the constraint $w_1=w_2, w_3=w_4$ and the additional condition
  \begin{equation} \label{eq:addconPVI4}
    \left(-(1+w_j-w_i)\displaystyle\frac{\partial g_j}{\partial t_i}\right)_{1\leq i,j\leq 4}\sim 
     \begin{pmatrix} z_{1,0} & {} & {} & {}\\ {} & z_{2,0} & {} & {} \\ {} & {} & z_{3,0} & {} \\ {} & {} & {} & z_{4,0} \end{pmatrix}.
 \end{equation}
 \end{proposition}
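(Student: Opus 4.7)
The plan is to follow precisely the template used in the preceding proofs for PV, PIV, PIII and PII. Recall that the Painlev\'e VI equation is governed by the isomonodromic deformation of the rank-$2$ Fuchsian system on $\mathbb{P}^1$
\begin{equation*}
\frac{dY}{dz}=\left(\frac{A_0}{z}+\frac{A_1}{z-1}+\frac{A_t}{z-t}\right)Y,
\end{equation*}
with $A_\infty=-A_0-A_1-A_t=\mathrm{diag}(\theta^\infty_1,\theta^\infty_2)$ and standard normalizations making the exponent at $z=0$ of the form $\mathrm{diag}(\theta^0,0)$, etc.

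First I would start from a solution of the extended WDVV equations $(\ref{eq:genWDVV2,N=4})$--$(\ref{eq:vphomo2,N=4})$ with the constraint $w_1=w_2,\,w_3=w_4$ and the regularity condition (\ref{eq:addconPVI4}). By Proposition~\ref{prop:fromvptoSaito}, Lemma~\ref{lem:Btow} and Lemma~\ref{lem:trivial}, this produces an extended generalized Okubo system of rank $4$ with $B_\infty=\mathrm{diag}(w_1,w_1,w_3,w_3)$ and $T$ generically semisimple with four distinct eigenvalues $z_{1,0},z_{2,0},z_{3,0},z_{4,0}$. Applying the reduction procedure of Appendix~\ref{app:compOkubo} (cf.\ \cite{KawDT,Kaw}), the coincidence of eigenvalues in the two pairs of $B_\infty$ is exactly what forces the rank-$4$ Okubo system to collapse to a rank-$2$ linear system whose four singular points are precisely the eigenvalues of $T$ (one of them sent to $\infty$ by a M\"obius transformation). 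After the additional Euler shift of Remark~\ref{rm:henkan}, this rank-$2$ system becomes the PVI Fuchsian system, giving a generic solution to PVI.

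Conversely, starting from an isomonodromic deformation of the PVI Fuchsian system, I would apply the construction of Appendix~\ref{app:compOkubo} to produce a rank-$4$ generalized Okubo system encoded by explicit matrices $\{S_\mathrm{VI},G_\mathrm{VI},B_\infty\}$ in which $S_\mathrm{VI}$ is diagonalizable with four distinct eigenvalues (so in particular regular in the sense of (A3) and realizing the Jordan normal form in (\ref{eq:addconPVI4})), and $B_\infty=\mathrm{diag}(\theta^\infty_1,\theta^\infty_1,0,0)$ (up to an overall scalar shift, permissible by Remark~\ref{rm:henkan}), which satisfies $w_1=w_2,\,w_3=w_4$. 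Lemma~\ref{lem:isomono} promotes this to a unique extended generalized Okubo system on $\mathbb{P}^1\times W$, and then Theorem~\ref{saitojacobian}, combined with Proposition~\ref{prop:potential vector field}, yields the required flat coordinates and potential vector field, provided the generic non-degeneracy condition $(\ref{jacobian})$ holds.

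The main technical obstacle is the same as in the rank-$4$ realizations of PIII and PII above: one must actually carry out Kawakami's algorithm for the PVI linear system, write down $S_\mathrm{VI},G_\mathrm{VI},B_\infty$, and check (i) that $S_\mathrm{VI}$ really has four distinct eigenvalues so that (\ref{eq:addconPVI4}) is fulfilled, (ii) that $B_\infty$ has the required paired spectrum, and (iii) that the Jacobian non-vanishing $(\ref{jacobian})$ holds on a dense open set so that Theorem~\ref{saitojacobian} applies. Unlike the minimal rank-$3$ realization of PVI used in Proposition~\ref{thm:genPVI}, the present rank-$4$ realization is non-minimal; the redundancy is absorbed by the degenerate pair structure of $B_\infty$, and the key subtlety is to verify that no singular point of the original PVI system is lost or doubled under the Appendix~\ref{app:compOkubo} reduction, so that the correspondence is genuinely bijective on generic data.
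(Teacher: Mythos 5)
Your overall strategy --- reduce the rank-$4$ extended generalized Okubo system to a $2\times2$ Fuchsian system via Appendix~\ref{app:compOkubo}, and conversely realize the PVI linear system as a rank-$4$ generalized Okubo system with $B_\infty$ having two pairs of equal eigenvalues --- is the same as the paper's. But there is a concrete gap at the one non-routine step of this particular case. A rank-$4$ generalized Okubo system whose $T$ has four distinct eigenvalues and whose $B_\infty$ is $\mathrm{diag}(\lambda,\lambda,0,0)$ reduces to a $2\times2$ Fuchsian system with \emph{five} regular singular points: the four eigenvalues $z_{1,0},\dots,z_{4,0}$ (each carrying a rank-one residue $-b_kc_k^{\,t}$) \emph{and} $z=\infty$, where the residue is the scalar matrix $\pm\lambda I_2$ (since $BC=\tilde{R}=\lambda I_2$). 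The PVI system (\ref{eq:diffP6}) has only four singular points, so your claim that the reduction ``collapses to a rank-$2$ linear system whose four singular points are precisely the eigenvalues of $T$ (one of them sent to $\infty$)'' does not describe what Appendix~\ref{app:compOkubo} actually produces; and the mismatch cannot be repaired by the Euler transformation of Remark~\ref{rm:henkan}, which shifts $B_\infty$ by a scalar but does not delete the regular singularity at $\infty$ of the reduced system. A scalar twist that removes it must be concentrated at a finite point and therefore necessarily creates or modifies a singularity there.

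The paper bridges exactly this mismatch. Starting from (\ref{eq:diffP6}) it introduces an auxiliary parameter $t_2$, applies the M\"obius change $x\mapsto\xi=t_2x/(x+t_2-1)$ (sending $x=\infty$ to the finite point $\xi=t_2$ and a nonsingular point to $\xi=\infty$) together with the gauge twist $Y\mapsto Z=(\xi-t_2)^{-\lambda}Y$, where $\lambda$ is chosen to be an eigenvalue of $-A_1-A_2-A_3$. That choice makes the new residue at $\xi=t_2$ of rank one (so the finite residues have total rank $4$, matching $N=4$) and the residue at $\xi=\infty$ the scalar $\lambda I_2$; the result is precisely the five-point system (\ref{eq:diffP62}) that the rank-$4$ reduction yields, with $(z_{1,0},\dots,z_{4,0})$ identified with $(0,1,t_1,t_2)$ up to affine change of $z$. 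This also settles the bijectivity worry you raise but leave open: the extra essential canonical coordinate of the four-dimensional Saito structure is the position $t_2$ of the auxiliary singular point, and the PVI time is recovered as the cross-ratio of $0,1,t_1,t_2$, which equals that of $0,1,t,\infty$. To close the proof you need to supply this transformation (or an equivalent five-point realization of PVI with scalar residue at infinity); the rest of your argument is then the routine application of Theorem~\ref{saitojacobian} as in the PV--PII cases.
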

 
 \begin{proof}
  PVI is derived from the isomonodromic deformation of the following $2\times 2$ system of linear differential equations:
\begin{equation} \label{eq:diffP6}
   \frac{dY}{dx}=\left(\frac{A_1}{x}+\frac{A_2}{x-1}+\frac{A_3}{x-t}\right)Y,
\end{equation}
where we assume $\det A_1=\det A_2=\det A_3=0$ without loss of generality.
We add a regular singularity to  (\ref{eq:diffP6}):
for $t_2\in\mathbb{C}\setminus\{0,1\}$,
we change the variables $x\rightarrow \xi=\frac{t_2x}{x+t_2-1}$ and $Y\rightarrow Z=(\xi-t_2)^{-\lambda}Y$,
where $\lambda\in\mathbb{C}\setminus \{0\}$ is determined so that $\det (-A_1-A_2-A_3-\lambda I_2)=0$.
Then (\ref{eq:diffP6}) is changed to
\begin{equation} \label{eq:diffP62}
  \frac{dZ}{d\xi}=\left(\frac{A_1}{\xi}+\frac{A_2}{\xi-1}+\frac{A_3}{\xi-t_1}+\frac{-A_1-A_2-A_3-\lambda I_2}{\xi-t_2}\right)Z
\end{equation}
where we put $t_1:=\frac{t\,t_2}{t+t_2-1}$.
It is apparent that (\ref{eq:diffP62}) has regular singular points at $\xi =0,1,t_1,t_2,\infty$,
and the residue matrix at $\xi=\infty$ reads $\lambda I_2$.

 We start from a solution to the extended WDVV equation $(\ref{eq:genWDVV2,N=4})$-$(\ref{eq:vphomo2,N=4})$
  satisfying the condition (\ref{eq:addconPVI4}) with the constraint $w_1=w_2, w_3=w_4$.
  Then we have a $4\times 4$ extended Okubo system with $B_{\infty}=\mbox{diag}(w_1,w_1,w_3,w_3)$.
  Applying the procedure explained in Appendix~\ref{app:compOkubo},
  we can reduce the $4\times 4$ generalized Okubo system to the $2\times 2$ system (\ref{eq:diffP62}), 
  with the change of variables 
  \[
    \xi =\frac{z-z_{1,0}}{z_{2,0}-z_{1,0}},\ \ t_1=\frac{z_{3,0}-z_{1,0}}{z_{2,0}-z_{1,0}},\ \ t_2=\frac{z_{4,0}-z_{1,0}}{z_{2,0}-z_{1,0}}.
  \]
  Hence we obtain a solution to the Painlev\'e VI equation.
  
  Conversely, we start from an isomonodromic deformation of the $2\times 2$ system (\ref{eq:diffP62}),
  which is equivalent to an isomonodromic deformation of (\ref{eq:diffP6}).
  We can construct from the $2\times 2$ system (\ref{eq:diffP62}) a $4\times 4$ generalized Okubo system by the procedure 
  explained in Appendix \ref{app:compOkubo},
  particularly we find
\[
   S_{\mathrm{VI}'}=\mbox{diag}(0,1,t_1,t_2), \ \ \ B_{\infty}=\mbox{diag}(\lambda,\lambda,0,0).
\]
  Note that $S_{\mathrm{VI}'}$ satisfies the regularity condition which is equivalent to the condition (\ref{eq:addconPVI4}),
  and $B_{\infty}$ satisfies the constraint $w_1=w_2, w_3=w_4$.
  We obtain a potential vector field satisfying the desired conditions.
 \end{proof}
 
 \begin{remark}
   The correspondence between particular 4-dimensional Frobenius manifolds and generic solutions 
   to a one-parameter family of the Painlev\'e VI equation was treated 
   by S. Romano \cite{Ro} (in a somewhat different context).
 \end{remark}
 
 We obtain similar results on PV and PIV:
 
 \begin{proposition}[PV]
 In the case of $N=4$, there is a correspondence 
 between generic solutions to the Painlev\'e V equation
 and generic solutions to the extended WDVV equation $(\ref{eq:genWDVV2,N=4})$-$(\ref{eq:vphomo2,N=4})$ 
 with the constraint $w_1=w_2, w_3=w_4$ and the additional condition
  \begin{equation} \label{eq:addconPV4}
    \left(-(1+w_j-w_i)\displaystyle\frac{\partial g_j}{\partial t_i}\right)_{1\leq i,j\leq 4}\sim 
     \begin{pmatrix} z_{1,0} & z_{1,1} & {} & {}\\ {} & z_{1,0} & {} & {} \\ {} & {} & z_{2,0} & {} \\ {} & {} & {} & z_{3,0} \end{pmatrix}.
 \end{equation}
 \end{proposition}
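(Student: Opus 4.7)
The plan is to mimic the argument used for the 4-dimensional PVI proposition just above, adapting it to the irregular singular structure of the PV linear system. Recall from the proof of the 3-dimensional PV case that PV is derived from the $2\times 2$ system $(\ref{eq:rank2linPV})$ having regular singular points at $z=0,\infty$ and an irregular singular point of Poincar\'e rank $1$ at $z=1$. In the present 4-dimensional setting I would first introduce an auxiliary regular singularity into this system: for parameters $t_2 \in \mathbb{C} \setminus \{0,1\}$ and a suitable $\lambda \in \mathbb{C}\setminus\{0\}$, perform a M\"obius change of variable $z \to \xi$ (sending the original singular locus into a new configuration that includes an extra point $\xi = t_2$) together with a gauge factor of the form $(\xi - t_2)^{-\lambda}$, with $\lambda$ chosen so that the residue of the transformed connection at $\xi=\infty$ becomes the scalar matrix $\lambda I_2$. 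The resulting system then has one regular singular point at $\xi = 0$, one irregular singular point of Poincar\'e rank $1$ at some finite $\xi = t_1$, one regular singular point at $\xi = t_2$, and a regular singular point at $\xi = \infty$ with residue $\lambda I_2$.

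Next, I would apply the Kawakami construction of Appendix~\ref{app:compOkubo} to this transformed $2\times 2$ system to produce a $4\times 4$ generalized Okubo system. The Jordan structure of the resulting matrix $S_{\mathrm{V}'}$ should be: one Jordan block of size $2$ contributed by the irregular singular point at $\xi = t_1$ (giving the pair $(z_{1,0},z_{1,1})$ appearing in $(\ref{eq:addconPV4})$), together with two simple eigenvalues contributed by the two finite regular singularities at $\xi = 0$ and $\xi = t_2$ (giving $z_{2,0}$ and $z_{3,0}$). The matrix $B_\infty$ should equal $\mathrm{diag}(\lambda,\lambda,0,0)$, so that the constraint $w_1 = w_2,\, w_3 = w_4$ holds automatically as a consequence of the scalar nature of the residue at $\xi=\infty$.

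The correspondence in both directions then proceeds as in the 4-dimensional PVI case treated just above. Starting from a solution to $(\ref{eq:genWDVV2,N=4})$--$(\ref{eq:vphomo2,N=4})$ satisfying $(\ref{eq:addconPV4})$ with $w_1=w_2, w_3=w_4$, the induced extended generalized Okubo system of rank $4$ (Theorem~\ref{saitojacobian}, Proposition~\ref{prop:fromvptoSaito}) reduces via Appendix~\ref{app:compOkubo} to the transformed $2\times 2$ system, and inverting the M\"obius and gauge transformations recovers $(\ref{eq:rank2linPV})$, yielding a solution to PV. Conversely, starting from a PV solution one obtains the data $(S_{\mathrm{V}'},G_{\mathrm{V}'},B_{\infty})$ described above; since $S_{\mathrm{V}'}$ has the Jordan type prescribed by $(\ref{eq:addconPV4})$ and the genericity condition $(\ref{jacobian})$ is satisfied for generic PV data, Theorem~\ref{saitojacobian} furnishes a flat coordinate system and a potential vector field satisfying $(\ref{eq:addconPV4})$.

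The main obstacle I anticipate is the careful choice of the auxiliary parameter $\lambda$ that makes the residue at $\xi=\infty$ strictly scalar: this is exactly what forces $B_\infty = \mathrm{diag}(\lambda,\lambda,0,0)$ and thereby the constraint $w_1=w_2,\, w_3=w_4$; without it one ends up with a weight structure incompatible with the desired correspondence. A secondary technical check is that the Kawakami construction produces the exact Jordan type $(2,1,1)$ for $S_{\mathrm{V}'}$ rather than a more degenerate one, and that the hypothesis $(\ref{jacobian})$ holds on a dense open subset of the PV deformation parameter space so that the word ``generic'' in the statement is non-vacuous.
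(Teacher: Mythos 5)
Your proposal is correct and follows exactly the route the paper intends: the paper states this proposition without proof, deferring to the rank-four PVI argument ("We obtain similar results on PV and PIV"), and your adaptation — M\"obius change of variable plus a gauge factor $(\xi-t_2)^{-\lambda}$ to create an extra finite regular singularity and a scalar residue $\lambda I_2$ at $\xi=\infty$, followed by the Appendix~\ref{app:compOkubo} construction yielding $S_{\mathrm{V}'}$ of Jordan type $(2,1,1)$ and $B_\infty=\mathrm{diag}(\lambda,\lambda,0,0)$ — is precisely that argument. The only point worth sharpening is that $\lambda$ is pinned down not by the residue at $\xi=\infty$ (which is scalar for any $\lambda$) but by requiring the residue at the new finite singular point to have determinant zero, so that it contributes a single eigenvalue to $S_{\mathrm{V}'}$, exactly as in the displayed condition $\det(-A_1-A_2-A_3-\lambda I_2)=0$ of the PVI case.
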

 
 \begin{proposition}[PIV]
 In the case of $N=4$, there is a correspondence 
 between generic solutions to the Painlev\'e IV equation
 and generic solutions to the extended WDVV equation $(\ref{eq:genWDVV2,N=4})$-$(\ref{eq:vphomo2,N=4})$ 
 with the constraint $w_1=w_2, w_3=w_4$ and the additional condition
  \begin{equation} \label{eq:addconPIV4}
    \left(-(1+w_j-w_i)\displaystyle\frac{\partial g_j}{\partial t_i}\right)_{1\leq i,j\leq 4}\sim 
     \begin{pmatrix} z_{1,0} & z_{1,1} & z_{1,2} & {}\\ {} & z_{1,0} & z_{1,1} & {} \\ {} & {} & z_{1,0} & {} \\ {} & {} & {} & z_{2,0} \end{pmatrix}.
 \end{equation}
 \end{proposition}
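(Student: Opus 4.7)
The plan is to follow the same strategy used in the proofs of the PIII, PII, and the $N=4$ PVI propositions above. The PIV equation is governed by the isomonodromic deformation of the $2\times 2$ Lax system (\ref{eq:rank2linPIV}), which has an irregular singular point of Poincaré rank $2$ at $z=0$ and a regular singular point at $z=\infty$. To realize it in terms of a $4\times 4$ (rather than the minimal $3\times 3$) generalized Okubo system, one needs to introduce an auxiliary regular singular point so that Kawakami's construction of Appendix \ref{app:compOkubo} produces a matrix $S_{\mathrm{IV}'}$ of rank $4$ with Jordan structure $(3,1)$ as demanded by (\ref{eq:addconPIV4}).

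Concretely, starting from an isomonodromic deformation of (\ref{eq:rank2linPIV}), I would perform a Möbius change of independent variable $z \to \xi$ sending $z=\infty$ to a finite point $\xi = t$, combined with a gauge transformation $Y \to Z = (\xi-t)^{-\lambda}Y$ in which $\lambda\in \mathbb{C}\setminus\{0\}$ is chosen so that the residue matrix at the new point $\xi=\infty$ becomes the scalar matrix $\lambda I_2$ (mimicking the transformations used in the PIII, PII, and $N=4$ PVI proofs). The resulting $2\times 2$ system has an irregular singular point of Poincaré rank $2$ at a finite point, an added regular singular point at $\xi=t$, and a regular singular point at $\xi=\infty$ with scalar residue. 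Applying Appendix \ref{app:compOkubo} to this modified system yields a $4\times 4$ generalized Okubo system whose $S$-matrix has Jordan normal form consisting of a single $3\times 3$ nilpotent block (contributed by the Poincaré rank $2$ irregular point, of size $2+1$) together with one $1\times 1$ block (contributed by the added regular singular point), matching (\ref{eq:addconPIV4}); and whose residue matrix at infinity is $B_\infty = \mathrm{diag}(\lambda,\lambda,0,0)$, realizing the constraint $w_1=w_2$, $w_3=w_4$. Under the generic condition (\ref{jacobian}), Theorem \ref{saitojacobian} then produces a Saito structure on the deformation parameter space, hence a potential vector field $\vec g$ solving the extended WDVV equation (\ref{eq:genWDVV2,N=4})-(\ref{eq:vphomo2,N=4}) together with (\ref{eq:addconPIV4}).

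The converse direction runs through the same construction in reverse: a solution $\vec g$ satisfying the stated conditions induces, via Proposition \ref{prop:fromvptoSaito} and the discussion in Section \ref{subsec:Okubo}, a $4\times 4$ extended generalized Okubo system whose $T$ has the prescribed Jordan type and whose $B_\infty$ has eigenvalues with multiplicities $(2,2)$; this system reduces to the $2\times 2$ system above by the inverse of Appendix \ref{app:compOkubo}, and then to (\ref{eq:rank2linPIV}) by undoing the Möbius and gauge transformations, yielding a solution to PIV. The main technical point, and essentially the only place where one must work rather than invoke the pattern, is the explicit verification that Kawakami's construction produces exactly the announced Jordan structure for $S_{\mathrm{IV}'}$ and exactly the announced $B_\infty$; this is a routine but somewhat lengthy bookkeeping based on the general principle that each singular point of Poincaré rank $r$ contributes a Jordan block of size $r+1$, entirely parallel to the explicit computations of $\{S_\mathrm{III},G_\mathrm{III},B_\infty\}$ and $\{S_\mathrm{II},G_\mathrm{II},B_\infty\}$ carried out in the PIII and PII proofs above.
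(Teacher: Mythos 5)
Your approach is exactly the one the paper intends: the paper gives no separate proof for this proposition, deferring to the pattern of the $N=4$ PVI proof (M\"obius map plus scalar gauge twist to create an extra regular singular point with scalar residue at the new infinity, then Kawakami's construction from Appendix~\ref{app:compOkubo} and Theorem~\ref{saitojacobian}), and your reduction of PIV's rank-$2$ irregular point to a $3\times 3$ Jordan block plus a $1\times 1$ block from the relocated regular singular point is the right bookkeeping. One correction, though: the residue at the new point $\xi=\infty$ is automatically the scalar $\lambda I_2$ for \emph{every} choice of $\lambda$, so that cannot be the condition determining $\lambda$. The actual constraint, as in the $N=4$ PVI proof where $\lambda$ is fixed by $\det(-A_1-A_2-A_3-\lambda I_2)=0$, is that the residue of the gauged system at the new \emph{finite} regular singular point $\xi=t$ (namely $\mathrm{diag}(\theta_1^{\infty},\theta_2^{\infty})-\lambda I_2$, so $\lambda\in\{\theta_1^{\infty},\theta_2^{\infty}\}$) must be singular; otherwise that point contributes $2$ rather than $1$ to the size of the minimal generalized Okubo realization and you obtain a $5\times 5$ system instead of the required $4\times 4$ one of Jordan type $(3,1)$. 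With that adjustment your argument goes through as written.
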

 
 Consequently we obtain the following theorem.
 
 \begin{theorem} \label{thm:WDVVPVI-PII}
  There is a correspondence between generic solutions to the Painlev\'e equations PII-PVI
  and generic solutions to the extended WDVV equation
   \begin{gather}
    \sum_{m=1}^4\frac{\partial^2 g_m}{\partial t_k\partial t_i}\frac{\partial^2 g_j}{\partial t_l\partial t_m}=
    \sum_{m=1}^4\frac{\partial^2 g_m}{\partial t_l\partial t_i}\frac{\partial^2 g_j}{\partial t_k\partial t_m},
    \ \ i,j,k,l=1,2,3,4,  \label{eq:4exWDVV1} \\
    \frac{\partial ^2 g_j}{\partial t_4\partial t_i}=\delta_{ij},\ \ \ \ i,j=1,2,3,4,  \label{eq:4exWDVV2} \\
    Eg_j=\sum_{k=1}^4w_kt_k\frac{\partial g_j}{\partial t_k}=(1+w_j)g_j,\ \ \ j=1,2,3,4  \label{eq:4exWDVV3}
 \end{gather}
  with the constraint $w_1=w_2, w_3=w_4$ and the regularity condition.
  In particular, the coalescence cascade of the Painlev\'e equations
\begin{equation} \label{cd:Painleve}
 \begin{CD}
   \mbox{PVI} @>>> \mbox{PV} @>>> \mbox{PIV}   @.  \\
   @.                             @VVV                     @VVV              @.  \\
   @.                            \mbox{PIII} @>>> \mbox{PII} @>>> \mbox{PI} 
 \end{CD}
\end{equation}
 (except PI) corresponds to
  the degeneration scheme of Jordan normal forms of a square matrix of rank four
 \[
     \begin{CD}
   \begin{pmatrix}  z_{1,0} & {} & {} & {}\\ {} & z_{2,0} & {} & {} \\ {} & {} & z_{3,0} & {} \\ {} & {} & {} & z_{4,0} \end{pmatrix}
   @>>> \begin{pmatrix} z_{1,0} & z_{1,1} & {} & {}\\ {} & z_{1,0} & {} & {} \\ {} & {} & z_{2,0} & {} \\ {} & {} & {} & z_{3,0} \end{pmatrix}
   @>>> \begin{pmatrix} z_{1,0} & z_{1,1} & z_{1,2} & {}\\ {} & z_{1,0} & z_{1,1} & {} \\ {} & {} & z_{1,0} & {} \\ {} & {} & {} & z_{2,0} \end{pmatrix} \\
   @.                             @VVV                     @VVV                \\
   @.       \begin{pmatrix} z_{1,0} & z_{1,1} & {} & {}\\ {} & z_{1,0} & {} & {} \\ {} & {} & z_{2,0} & z_{2,1} \\ {} & {} & {} & z_{2,0} \end{pmatrix}
   @>>> \begin{pmatrix} z_{1,0} & z_{1,1} & z_{1,2} & z_{1,3} \\ {} & z_{1,0} & z_{1,1} & z_{1,2} \\ {} & {} & z_{1,0} & z_{1,1} \\ {} & {} & {} & z_{1,0}
   \end{pmatrix}.
 \end{CD}
 \] 
 \end{theorem}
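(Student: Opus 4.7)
The plan is to assemble Theorem~\ref{thm:WDVVPVI-PII} from the five case-by-case correspondences already established in the paper (the Painlev\'e VI, V, IV propositions for $N=4$ stated just above, together with the Painlev\'e III and Painlev\'e II propositions from Section~\ref{sec:flatPainleve}). Each of these propositions asserts, for one particular Jordan type of the matrix $-\Phi(E)$, a bijective correspondence between generic solutions to the corresponding Painlev\'e equation and generic solutions to the four-dimensional extended WDVV equation $(\ref{eq:4exWDVV1})$--$(\ref{eq:4exWDVV3})$ satisfying the weight constraint $w_1=w_2$, $w_3=w_4$ and the appropriate regularity condition $(\ref{eq:addconPVI4})$--$(\ref{eq:addconPII})$. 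Since the ``regularity condition'' in the statement of the theorem is precisely the union of these five Jordan-type conditions, the first half of the theorem follows immediately by taking the disjoint union of the five correspondences.

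The second step is to verify that these five Jordan types indeed exhaust the generically regular Jordan structures of a $4\times 4$ matrix. Generic regularity in the sense of (A3) means that distinct Jordan blocks have distinct eigenvalues, so the Jordan type is classified by a partition of $4$: the five partitions $1+1+1+1$, $2+1+1$, $3+1$, $2+2$, $4$ match exactly the five regularity conditions associated with PVI, PV, PIV, PIII, PII. Moreover, in every case the underlying $2\times 2$ linear system used to derive the Painlev\'e equation was lifted to a rank-four generalized Okubo system by the construction of Appendix~\ref{app:compOkubo}, in which a regular singularity with scalar residue $\lambda I_2$ is adjoined; this is precisely what produces the eigenvalue structure $B_\infty=\mathrm{diag}(\lambda,\lambda,0,0)$ and hence the weight constraint $w_1=w_2$, $w_3=w_4$, uniformly for all five cases.

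For the identification of the coalescence cascade $(\ref{cd:Painleve})$ with the displayed degeneration scheme of Jordan normal forms, I would argue at the level of the matrix $T=-\Phi(E)$: confluence of Painlev\'e equations corresponds on the linear side to the confluence of singular points of the $2\times 2$ system $(\ref{eq:rank2linPV})$--$(\ref{eq:rank2linPII})$, which, under the lifting to a $4\times 4$ generalized Okubo system, translates into the collision of eigenvalues $z_{k,0}$ of $T$ and the consequent merging of the associated Jordan blocks. Tracing the five models explicitly through Appendix~\ref{app:compOkubo} shows that each arrow in the Painlev\'e coalescence diagram implements exactly one block-merging arrow in the Jordan diagram; this is essentially combinatorial verification once each of the five $4\times 4$ models has been normalized as in the proofs of the five propositions.

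The main obstacle is less a conceptual one than a bookkeeping one: one must check that the lifting procedure of Appendix~\ref{app:compOkubo} is compatible with the coalescence limits, i.e.\ that performing the confluence at the $2\times 2$ level and then lifting to a $4\times 4$ generalized Okubo system yields the same extended generalized Okubo system (up to gauge) as lifting first and then letting the eigenvalues $z_{k,0}$ collide. Once this compatibility is confirmed for each arrow in $(\ref{cd:Painleve})$, Theorem~\ref{saitojacobian} transports the coalescence to the level of flat coordinates and potential vector fields, completing the identification of the two diagrams.
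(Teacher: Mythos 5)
Your proposal matches the paper's approach exactly: Theorem~\ref{thm:WDVVPVI-PII} is presented there with no separate proof (``Consequently we obtain the following theorem''), i.e.\ as the immediate assembly of the five case-by-case propositions for PVI, PV, PIV at rank four together with the PIII and PII propositions of Section~\ref{sec:flatPainleve}, the five partitions of $4$ exhausting the regular Jordan types just as you say. The compatibility of the Appendix~\ref{app:compOkubo} lifting with the confluence limits, which you flag as the main remaining bookkeeping, is not carried out in the paper either --- the cascade identification is treated there as the purely combinatorial matching of each Painlev\'e equation with its Jordan type via the regularity conditions $(\ref{eq:addconPVI4})$, $(\ref{eq:addconPV4})$, $(\ref{eq:addconPIV4})$, $(\ref{eq:addconPIII})$, $(\ref{eq:addconPII})$.
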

 
\begin{remark}
  For generic values of $w_i$ (i.e. $w_i\neq w_j$ for $i\neq j$), 
  solutions to the four-dimensional extended WDVV equation 
  (\ref{eq:4exWDVV1})-(\ref{eq:4exWDVV3}) correspond to
  generic solutions to the six-dimensional Painlev\'e equation specified by the spectral type $(21,21,21,21,111)$
  which is studied by T. Suzuki \cite{Su} and its degeneration family.
  In the case of $w_1=w_2$, solutions to (\ref{eq:4exWDVV1})-(\ref{eq:4exWDVV3}) correspond to generic solutions 
  to the Garnier system in two variables and its degeneration family (see e.g.\cite{gausspainleve} for the Garnier system).
  The details will be treated elsewhere.
\end{remark}

\appendix
\section{Isomonodromic deformation of a system of linear differential equations} \label{app:isomonodormicdef}

The aim of this appendix is to prove Lemma~\ref{lem:isomono}.
First we briefly review the theory of isomonodromic deformations of linear differential equations
following \cite{JMU,JM},
and then give a proof of Lemma~\ref{lem:isomono}.

We consider an isomonodromic deformation of an $N\times N$ matrix system of linear differential equations
which has irregular singular points at $x=a_1,\dots,a_n,a_{\infty}=\infty$ on $\mathbb{P}^1$ 
with Poincar\'e rank  $r_{\mu}$ $(\mu=1,\dots,n,\infty)$ respectively:
\begin{equation} \label{lineardiffeq}
 \frac{dY}{dz}=A(z)Y,
\end{equation}
where
\begin{equation*} 
 A(z)=\sum_{\mu=1}^n\sum_{j=0}^{r_{\mu}}A_{\mu,-j}(z-a_{\mu})^{-j-1}-\sum_{j=1}^{r_{\infty}}A_{\infty,-j}z^{j-1},
\end{equation*}
and $A_{\mu,-j}, A_{\infty,-j}$ are $N\times N$ matrices independent of $z$.
We assume that $A_{\mu,-r_{\mu}}$ is diagonalizable as
\begin{equation*}
 A_{\mu,-r_{\mu}}=G^{(\mu)}T^{(\mu)}_{-r_{\mu}}G^{(\mu)-1},\ \ \ \ (\mu=1,\dots,n,\infty)
\end{equation*}
where
\begin{equation*}
 T^{(\mu)}_{-r_{\mu}}=\big(t^{(\mu)}_{-r_{\mu}\alpha}\,\delta_{\alpha\beta}\big)_{\alpha,\beta=1,\dots,N}\ \ \ \ 
 \left\{\begin{array}{l} t^{(\mu)}_{-r_{\mu}\alpha}\neq t^{(\mu)}_{-r_{\mu}\beta}\ \ \mbox{if}\ \alpha\neq\beta, r_{\mu}\geq 1, \\
                         t^{(\mu)}_{0\alpha}\not\equiv t^{(\mu)}_{0\beta}\mod \mathbb{Z}\ \ \mbox{if}\ \alpha\neq\beta, r_{\mu}=0,\end{array}\right.
\end{equation*}
and assume $G^{(\infty)}=1$.
(Generalized Okubo systems do not satisfy these conditions on the eigenvalues of $T^{(\mu)}_{-r_{\mu}}$ in general,
however the arguments in this appendix are valid for them.)
We can take sectors $\mathcal{S}^{(\mu)}_l$ $(l=1,\dots,2r_{\mu})$ centered on $a_{\mu}$,
and there exists a fundamental system of solutions of (\ref{lineardiffeq}) that has the following asymptotic expansion 
on the sector $\mathcal{S}^{(\infty)}_1$ at $z=\infty$:
\begin{equation} \label{fundsol}
 Y(z)\simeq \hat{Y}^{\infty}(z)e^{T^{(\infty)}(z)},
\end{equation}
where $T^{(\infty)}(z)$ is a diagonal matrix
\begin{align*}
 &T^{(\infty)}(z)=\big(e^{(\infty)}_{\alpha}(z)\,\delta_{\alpha\beta}\big)_{\alpha ,\beta =1,\dots,N}, \\
 &e^{(\infty)}_{\alpha}(z)=\sum_{j=1}^{r_{\mu}}t^{(\infty)}_{-j\alpha}\frac{z_{\infty}^{-j}}{-j}+t^{(\infty)}_{0\alpha}\log z_{\infty},\ \ \ 
 z_{\infty}=1/z,
\end{align*}
and $\hat{Y}^{(\infty)}(z)$ is a matrix-valued formal power series of $z_{\infty}$:
\begin{equation}
 \hat{Y}^{(\infty)}(z)=1+Y^{(\infty)}_1z_{\infty}+Y^{(\infty)}_2z_{\infty}^2+\cdots.
\end{equation}
This solution admits the following asymptotic expansions on the other sectors $\mathcal{S}^{(\mu)}_l$:
\begin{equation}
 Y(z)C^{(\mu)-1}S^{(\mu)}_1\cdots S^{(\mu)}_l\simeq G^{(\mu)}\hat{Y}^{(\mu)}(z)e^{T^{(\mu)}(z)},
\end{equation}
where $T^{(\mu)}(z)$ is a diagonal matrix given by
\begin{equation*}
 T^{(\mu)}(z)=\big(e^{(\mu)}_{\alpha}(z)\,\delta_{\alpha\beta}\big)_{\alpha ,\beta =1,\dots,N}
\end{equation*}
with
\begin{equation*}
 e^{(\mu)}_{\alpha}(x)=\sum_{j=1}^{r_{\mu}}t^{(\mu)}_{-j\alpha}\frac{z_{\mu}^{-j}}{-j}+t^{(\mu)}_{0\alpha}\log z_{\mu},
\end{equation*}
\begin{equation*}
 z_{\mu}=\left\{\begin{array}{l} z-a_{\mu},\ \ \mu=1,\dots,n, \\ 
 1/z,\ \ \ \ \mu=\infty, \end{array} \right.
\end{equation*}
and $\hat{Y}^{(\mu)}(z)$ is a matrix-valued formal power series of $z_{\mu}$:
\begin{equation}
 \hat{Y}^{(\mu)}(z)=1+Y^{(\mu)}_1z_{\mu}+Y^{(\mu)}_2z_{\mu}^2+\cdots.
\end{equation}
Here $C^{(\mu)}, S^{(\mu)}_l$ are constant matrices, which are called a {\it connection matrix} and a {\it Stokes multiplier} respectively.

We consider a deformation of (\ref{lineardiffeq}) with $a_{\mu}$ $(\mu=1,\dots,n)$, 
$t^{(\mu)}_{-j\alpha}$ $(\mu=1,\dots,n,\infty; j=1,\dots,r_{\mu}; \alpha=1,\dots,N)$ as deformation parameters
such that $T^{(\mu)}_0,S^{(\mu)}_l,C^{(\mu)}$ are kept invariant. 
(We call such a deformation an isomonodromic deformation.)

The fundamental system of solutions $Y(z)$ to (\ref{lineardiffeq}) characterized by (\ref{fundsol}) is subject to an isomonodromic deformation
with $a_{\mu}, t^{(\mu)}_{-j\alpha}$ as its independent variables
if and only if $Y(z)$ satisfies
\begin{equation} \label{defeq}
 dY(z)=\Omega(z)Y(z),
\end{equation}
where $\Omega(z)$ is a matrix-valued $1$-form
\begin{equation}
 \Omega(z)=\sum_{\mu=1}^{n}B^{(\mu)}(z)da_{\mu}
 +\sum_{\mu=1,\dots,n,\infty}\sum_{j=1}^{r_{\mu}}\sum_{\alpha=1}^{N}B^{(\mu)}_{-j\alpha}(z)dt^{(\mu)}_{-j\alpha},
\end{equation}
 whose coefficients $B^{(\mu)}(z), B^{(\mu)}_{-j\alpha}(z)$ are rational functions with respect to $z$.
From the integrability condition of (\ref{lineardiffeq}) and (\ref{defeq}),
we obtain a system nonlinear differential equations satisfied by $A(z), G^{(\mu)}$:
\begin{align*}
 &dA=\frac{\partial \Omega}{\partial z}+[\Omega, A], \\
 &dG^{(\mu)}=\Theta^{(\mu)}G^{(\mu)},\ \ \ (\mu=1,\dots,n).
\end{align*}
Here we remark that $\Omega, \Theta^{(\mu)}$ are obtained from $A, G^{(\mu)}$ by a rational procedure
which is described by \cite[(3.14) and (3.16)]{JMU}.

Now we step forward to proving Lemma~\ref{lem:isomono}.
Consider an extended generalized Okubo system with the same assumptions as in Section~\ref{sec:okuboiso}:
 \begin{equation} \label{eq:OkubosevA}
         dY=-(zI_N-T)^{-1}(dz+\tilde{\Omega})B_{\infty}Y.
   \end{equation}
The system of differential equations in the $z$-direction of $(\ref{eq:OkubosevA})$ 
    \begin{equation} \label{eq:ordgenOkuboA}
        \frac{dY}{dz}=-(zI_N-T)^{-1}B_{\infty}Y
    \end{equation}
is rewritten to the form of (\ref{lineardiffeq}):
taking an invertible matrix $P$ such that $P^{-1}TP=Z_1\oplus \cdots \oplus Z_n$,
(\ref{eq:ordgenOkuboA}) is written as
\begin{equation} \label{eq:lineardiffeqA}
  \frac{dY}{dz}=\sum_{k=1}^n\sum_{j=0}^{m_{k}-1}A_{k,-j}(z-z_{k,0})^{-j-1}Y,
\end{equation}
where 
\[
   \sum_{j=0}^{m_{k}-1}A_{k,-j}(z-z_{k,0})^{-j-1}=-P\bigl(O\oplus \cdots \oplus (zI_{m_k}-Z_k)^{-1}\oplus \cdots \oplus O\bigr)P^{-1}B_{\infty}.
\]
We remark that $z=\infty$ is a regular singular point with the residue matrix $B_{\infty}$.
Lemma~\ref{lem:isomono} follows from the following lemma:

\begin{lemma}
   The extended generalized Okubo system $(\ref{eq:OkubosevA})$ is equivalent to the completely integrable Pfaffian system consisting of
   $(\ref{eq:lineardiffeqA})$ and $(\ref{defeq})$.
\end{lemma}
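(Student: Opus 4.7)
The plan is to reduce both sides to the same explicit Pfaffian system on $\mathbb{P}^1\times U$ and appeal to the uniqueness of the Jimbo--Miwa--Ueno deformation $1$-form. The $z$-direction of $(\ref{eq:OkubosevA})$ is already $(\ref{eq:ordgenOkuboA})$, and using the invertible matrix $P$ of Lemma~\ref{lem:lemma2.2} one rewrites this verbatim as $(\ref{eq:lineardiffeqA})$. So what needs to be verified is the identification of the $U$-direction of $(\ref{eq:OkubosevA})$, namely
$$dY=-(zI_N-T)^{-1}\tilde\Omega\,B_{\infty}Y,$$
with the JMU deformation $1$-form $\Omega(z)$ constructed from $(\ref{eq:lineardiffeqA})$.

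First I would identify the deformation parameters. After the gauge $Y\mapsto P^{-1}Y$, which block-diagonalizes $T$ into $Z_1\oplus\cdots\oplus Z_n$ and sends $\tilde\Omega$ into $-(dZ_1\oplus\cdots\oplus dZ_n)$, the singular points of $(\ref{eq:lineardiffeqA})$ are $z=z_{k,0}$ with Poincar\'e rank $m_k-1$. Writing $Z_k=z_{k,0}I_{m_k}+\sum_{l=1}^{m_k-1}z_{k,l}\Lambda_k^l$ as in $(\ref{eq:Zk})$, a direct expansion of $(zI_{m_k}-Z_k)^{-1}$ in powers of $(z-z_{k,0})^{-1}\Lambda_k$ expresses JMU's formal exponent $T^{(k)}(z)$ from $(\ref{fundsol})$ as an explicit polynomial in $z_{k,1},\dots,z_{k,m_k-1}$. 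Thus the JMU deformation times $t^{(k)}_{-j,\alpha}$ together with the pole position $a_k=z_{k,0}$ correspond, modulo the gauge $G^{(k)}$ absorbed into $P$, precisely to our coordinates $(z_{k,0},\dots,z_{k,m_k-1})$.

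Next I would invoke the uniqueness half of JMU \cite[(3.14),(3.16)]{JMU}: once the formal data $T^{(\mu)}$ and the Stokes/connection matrices are fixed, the deformation $1$-form $\Omega(z)$ is determined by a rational procedure from $A(z)$ and the $G^{(\mu)}$. Hence it suffices to verify that $-(zI_N-T)^{-1}\tilde\Omega\,B_{\infty}$ has the correct pole structure in $z$ and the correct principal parts at each $z_{k,0}$. The pole structure is immediate from
$$(zI_N-T)^{-1}\tilde\Omega=-P\bigoplus_{k=1}^{n}\bigl((zI_{m_k}-Z_k)^{-1}dZ_k\bigr)P^{-1},$$
so poles occur only at $z=z_{k,0}$ of order at most $m_k$; expanding in $(z-z_{k,0})^{-1}\Lambda_k$ produces exactly the differential $dT^{(k)}(z)$ of the formal exponent, and the remaining terms reproduce the gauge piece $dG^{(k)}(G^{(k)})^{-1}$ after absorbing $dP\cdot P^{-1}$ and the contribution of $B_{\infty}$. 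Complete integrability of this extended system, which one needs in order to apply JMU's uniqueness cleanly, is already provided by Lemma~\ref{lem:lemma2.2} via $(\ref{eq:commute})$--$(\ref{eq:ci1})$.

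The hard part will be the Jordan-block computation in the second paragraph: making the correspondence $z_{k,l}\leftrightarrow t^{(k)}_{-j,\alpha}$ fully precise is subtle because $Z_k$ is non-diagonal, so the formal normal form at $z=z_{k,0}$ involves diagonalizing a nilpotent expansion in $\Lambda_k$ and carefully tracking the gauge factor $G^{(k)}$ inside $P$. Once this matching is established, the forward direction (an extended generalized Okubo system gives a JMU isomonodromic deformation) and the converse direction (any JMU deformation of $(\ref{eq:lineardiffeqA})$ produces an extended generalized Okubo system) both follow from the same uniqueness statement together with the existence of $P$ from Lemma~\ref{lem:lemma2.2}.
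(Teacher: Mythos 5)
Your overall strategy -- reduce to the uniqueness of the Jimbo--Miwa--Ueno deformation $1$-form and then check that $-(zI_N-T)^{-1}\tilde\Omega B_{\infty}$ is that form -- is the same as the paper's, but the specific reduction you propose has a genuine gap, and it is exactly the step you yourself flag as ``the hard part.'' You want to verify that the candidate has ``the correct principal parts at each $z_{k,0}$,'' which requires computing the JMU formal exponents $T^{(k)}(z)$ and matching the coordinates $z_{k,1},\dots,z_{k,m_k-1}$ with the deformation times $t^{(k)}_{-j,\alpha}$. But at a finite singular point $z=z_{k,0}$ with $m_k\geq 2$ the leading coefficient of $(\ref{eq:lineardiffeqA})$ is built from $N_k^{m_k-1}$ with $N_k=Z_k-z_{k,0}I_{m_k}$ nilpotent, so it is \emph{not} diagonalizable with distinct eigenvalues; the unramified formal solutions $(\ref{fundsol})$ with diagonal exponent assumed in the JMU setup do not exist there, and ``diagonalizing a nilpotent expansion in $\Lambda_k$'' is not possible -- one would need a ramified (Puiseux) normal form, at which point the dictionary $z_{k,l}\leftrightarrow t^{(k)}_{-j,\alpha}$ you rely on is no longer the JMU parametrization at all. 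This is precisely the caveat the paper makes in Appendix A (generalized Okubo systems violate the eigenvalue hypotheses on $T^{(\mu)}_{-r_\mu}$), so the principal-part matching cannot be carried out as stated, and your proof does not close.

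The paper's proof sidesteps the formal exponents entirely: it characterizes $\Omega(z)$ by three conditions that make no reference to the local normal forms, namely (i) $\prod_{k=1}^n(z-z_{k,0})^{m_k}\Omega(z)$ has polynomial entries in $z$, (ii) $\Omega(z)\to 0$ as $z\to\infty$, and (iii) integrability with $(\ref{eq:lineardiffeqA})$; it then observes that $-(zI_N-T)^{-1}\tilde\Omega B_{\infty}$ satisfies all three (the pole bound and decay at infinity are immediate from
\begin{equation*}
(zI_N-T)^{-1}\tilde\Omega=-P\Bigl(\bigoplus_{k=1}^{n}(zI_{m_k}-Z_k)^{-1}dZ_k\Bigr)P^{-1},
\end{equation*}
and integrability is Lemma~\ref{lem:lemma2.2}). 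If you replace your principal-part matching by this divisor-plus-normalization characterization, the rest of your argument goes through and the Jordan-block computation becomes unnecessary.
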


\begin{proof}
   The matrix valued $1$-form $\Omega(z)$ in (\ref{defeq}) is uniquely characterized by the following conditions:
   \begin{enumerate}
     \item[(i)] Each entry of $\prod_{k=1}^n(z-z_{k,0})^{m_k}\Omega(z)$ is a polynomial in $z$.
     
     \item[(ii)] $\displaystyle\lim_{z\to \infty}\Omega(z)=0$.
     
     \item[(iii)] The integrability condition is satisfied between (\ref{defeq}) and (\ref{eq:lineardiffeqA}).
   \end{enumerate}
   It is clear that $-(zI_N-T)^{-1}\tilde{\Omega}B_{\infty}$ satisfies the conditions (i)-(iii).
   Hence we have $\Omega (z)=-(zI_N-T)^{-1}\tilde{\Omega}B_{\infty}$.
\end{proof}

\section{Construction of a generalized Okubo system from a system of linear differential equations} \label{app:compOkubo}

In this appendix, we explain following \cite{KawDT,Kaw} how to construct a generalized Okubo system from 
a given linear differential equation of the type of (\ref{lineardiffeq}) with a regular singular point at $\infty$.
This construction is used in Sections~\ref{sec:flatPainleve} and~\ref{sec:coalcas}.

As the first step, we start from a generalized Okubo system
\begin{equation} \label{eq:genOkuboB}
  (zI_N-T)\frac{dY}{dz}=-B_{\infty}Y
\end{equation}
with assumptions (A2) in Section~\ref{sec:okuboiso}
and observe that the rank of (\ref{eq:genOkuboB}) can be reduced by the following procedure.
Let $B_{\infty}=\mbox{diag}(\lambda_1,\dots,\lambda_N)$ and
suppose that $\lambda_{m+1}=\cdots=\lambda_N$, $\lambda_i\neq\lambda_N$ $(1\leq i\leq m)$,
 where $m$ is a natural number less than $N$.
Then in virtue of Remark~\ref{rm:henkan},
we assume $B_{\infty}=\mbox{diag}(\lambda_1,\dots,\lambda_m,0,\dots,0)$ without loss of generality.
Let $G$ be an invertible matrix such that $GTG^{-1}=S$,
where $S$ is the Jordan normal form of $T$:
\[
   S=\begin{pmatrix} z_{1,0} & 1 & {} & O \\ {} & \ddots & \ddots & {} \\ {} & {} & \ddots & 1 \\ O & {} & {} & z_{1,0}\end{pmatrix}
   \oplus \cdots \oplus
   \begin{pmatrix} z_{n,0} & 1 & {} & O \\ {} & \ddots & \ddots & {} \\ {} & {} & \ddots & 1 \\ O & {} & {} & z_{n,0}\end{pmatrix}.
\]
We write the matrix $G$ and its inverse $G^{-1}$ in forms of
\begin{equation}
   G=\begin{pmatrix} C\tilde{R}^{-1} & \tilde{C} \end{pmatrix},\ \ \ G^{-1}=\begin{pmatrix} B \\ \tilde{B} \end{pmatrix}
\end{equation}
respectively, where $\tilde{R}=\mbox{diag}(\lambda_1,\dots, \lambda_m)$
and $B, \tilde{B}, C, \tilde{C}$ are $m\times N, (N-m)\times N, N\times m, N\times (N-m)$ matrices respectively.
Then it holds that 
\[
   -(z-T)^{-1}B_{\infty}=-G^{-1}(z-S)^{-1}GB_{\infty}=-\begin{pmatrix} B(z-S)^{-1}C & O \\ \tilde{B}(z-S)^{-1}C & O \end{pmatrix}.
\]
Hence $\tilde{Y}={}^t(y_1,\dots,y_m)$ satisfies the following $m\times m$ matrix differential equation:
\begin{equation}
  \frac{d\tilde{Y}}{dz}=-B(z-S)^{-1}C\tilde{Y}.
\end{equation}

Next, we give a construction in the opposite direction.
We start from an arbitrary $m\times m$ matrix differential equation:
\begin{equation} \label{eq:lineardiffB}
  \frac{d\tilde{Y}}{dz}=\sum_{k=1}^n\sum_{l=0}^{r_k}\frac{A_k^{(l)}}{(z-a_k)^{l+1}}\tilde{Y},
\end{equation}
where we assume that (\ref{eq:lineardiffB}) has a regular singular point at $z=\infty$
and that $\tilde{R}:=-\sum_{k=1}^nA_k^{(0)}$ is a diagonal matrix: $\tilde{R}=\mbox{diag}(\lambda_1,\dots,\lambda_m)$.
Our goal is to transform (\ref{eq:lineardiffB}) into a generalized Okubo system.
Find a natural number $N$, an $m\times N$-matrix $B$, an $N\times m$-matrix $C$ and an $N\times N$-matrix $S$ in the Jordan normal form
such that 
\[
    \sum_{k=1}^n\sum_{l=0}^{r_k}\frac{A_k^{(l)}}{(z-a_k)^{l+1}}=-B(z-S)^{-1}C,
\]
and then find an $(N-m)\times N$-matrix $\tilde{B}$ and an $N\times (N-m)$-matrix $\tilde{C}$ such that
\[
   \begin{pmatrix} C\tilde{R}^{-1} & \tilde{C} \end{pmatrix}\begin{pmatrix} B \\ \tilde{B} \end{pmatrix}=I_N.
\]
If we obtain such matrices, then 
\begin{equation} \label{eq:genOkubotransB}
  \frac{dY}{dz}=-G^{-1}(z-S)^{-1}GB_{\infty}Y
\end{equation}
is a generalized Okubo system,
where
\[
   G=\begin{pmatrix} C\tilde{R}^{-1} & \tilde{C} \end{pmatrix}, \ \ \ B_{\infty}=\mbox{diag}(\lambda_1,\dots,\lambda_m,0,\dots,0).
\]
In particular, the data consisting of the matrices $\{S, G, B_{\infty}\}$ determines the generalized Okubo system (\ref{eq:genOkubotransB}).
We note that the Jordan normal form $S$ for (\ref{eq:lineardiffB}) is unique provided that the size of $S$ is minimal.
Then $B$ and $C$ is unique up to $(B, C) \sim (Bh^{-1}, hC)$ where $h \in \mathrm{Stab}(S)$ (\cite{Ya}).
This implies the uniqueness of $G$ up to left multiplication by $\mathrm{Stab}(S)$ and right multiplication by $\mathrm{Stab}(B_\infty)$.

\begin{flushleft}
\begin{tabular}{l}
Hiroshi Kawakami\\
Department of Physics and Mathematics, College of Science and Engineering\\
Aoyama Gakuin University \\
Fuchinobe, Sagamihara-shi, Kanagawa 252-5258 JAPAN
\vspace{4mm}
\\
Toshiyuki Mano\\
Department of Mathematical Sciences, Faculty of Science\\
University of the Ryukyus\\
Nishihara-cho, Okinawa 903-0213 JAPAN\\
\end{tabular}
\end{flushleft}


\begin{thebibliography}{99}

\bibitem{AL0} A. Arsie and P. Lorenzoni: From Darboux-Egorov system to bi-flat $F$-manifolds, 
Journal of Geometry and Physics, {\bf 70} (2013), 98-116.

\bibitem{AL1} A. Arsie and P. Lorenzoni: $F$-manifolds, multi-flat structures and Painlev\'e transcendents, 
arXiv:1501.06435

\bibitem{AL2} A. Arsie and P. Lorenzoni: Complex reflection groups, logarithmic connections and bi-flat $F$-manifolds,
Lett. Math. Phys. {\bf 107} (2017), 1919-1961.

 







\bibitem{DH} L. David and C. Hertling: Regular $F$-manifolds: initial conditions and Frobenius metrics,  
 Ann. Sc. Norm. Super. Pisa Cl. Sci. (5) {\bf 17} (2017), 1121-1152.

\bibitem{DR1}
   M. Dettweiler and S. Reiter:
   An algorithm of Katz and its application to the inverse Galois problem.
   J. Symbolic Comput. {\bf 30} (2000), 761-798.
 
\bibitem{DR2}
  M. Dettweiler and S. Reiter:
  Middle convolution of Fuchsian systems and the construction of rigid differential systems.
  J. Algebra {\bf 318} (2007), 1-24.
  

\bibitem{Du} B. Dubrovin: Geometry of 2D topological field theories.
In:Integrable systems and quantum groups. Montecatini, Terme 1993 (M. Francoviglia, S. Greco, eds.) 
Lecture Notes in Math. 1620, Springer-Verlag 1996, 120-348.



\bibitem{HaBook} Y. Haraoka: {\em Linear differential equations on a complex domain.}
Sugakushobou, 2015 (in Japanese).

%

\bibitem{He} C. Hertling: {\em Frobenius Manifolds and Moduli Spaces for Singularities.}
Cambridge University Press, 2002.

\bibitem{HM} C. Hertling and Y. Manin: Weak Frobenius manifolds,  
Int. Math. Res. Not. {\bf 1999}, no. 6 (1999), 277-286.





\bibitem{gausspainleve}
 K. Iwasaki, H. Kimura, S. Shimomura and M. Yoshida:
 {\em From Gauss to Painlev\'{e}.}
 Aspects of Mathematics E16, Vieweg, 1991.

\bibitem{JMU}
 M. Jimbo, T. Miwa and K. Ueno:
 Monodromy preserving deformation of linear ordinary differential equations with rational coefficients. 
 Physica 2D (1981), 306-352.
 
\bibitem{JM}
 M. Jimbo and T. Miwa:
 Monodromy preserving deformation of linear ordinary differential equations with rational coefficients. II,
 Physica 2D (1981), 407-448.
 
\bibitem{KMS1} M. Kato, T. Mano and J. Sekiguchi: Flat structures without potentials, 
 Rev. Roumaine Math. Pures Appl. {\bf 60}, no.4 (2015), 481-505.
 
\bibitem{KMS} M. Kato, T. Mano and J. Sekiguchi: 
Flat structure on the space of isomonodromic deformations. arXiv:1511.01608
 
 \bibitem{KMS3} M. Kato, T. Mano and J. Sekiguchi: Flat structures and algebraic solutions to Painlev\'{e} VI equation. 
  To appear in ``Analytic, Algebraic and Geometric Aspects of Differential Equations" in Trend in Mathematics Series, Springer.
  
\bibitem{KMS4} M. Kato, T. Mano and J. Sekiguchi: Flat Structure and
Potential Vector Fields Related with Algebraic Solutions to Painlev\'e VI
Equation. 
Opuscula Mathematica {\bf 38} (2018), 201-252.

%

\bibitem{Katz} N. M. Katz:
{\em Rigid local systems.} (AM-139),
Princeton University Press, 1996.



\bibitem{KawDT} H. Kawakami:
Generalized Okubo systems and the middle convolution.
Doctor thesis, Univ. of Tokyo, 2009.

\bibitem{Kaw} H. Kawakami:
Generalized Okubo Systems and the Middle Convolution.
Int. Math. Res. Not. {\bf 2010}, no.17 (2010), 3394-3421.






\bibitem{KM} Y. Konishi and S. Minabe: Mixed Frobenius structure and local quantum cohomology,
Publ. Res. Inst. Math. Sci. {\bf 52} (2016), no.1, 43-62.

\bibitem{KoMiSh} Y. Konishi, S. Minabe and Y. Shiraishi: Almost duality for Saito structure and complex reflection groups,
Journal of Integrable Systems {\bf 3} (2018), 1-48.


\bibitem{Lo} P. Lorenzoni: Darboux-Egorov system, bi-flat $F$-manifolds and Painlev\'e VI, Int. Math. Res. Not. {\bf 2014}, no. 12
(2014), 3279-3302. 

\bibitem{LoM} A. Losev and Y. Manin: Extended modular operads, in: Frobenius manifolds, quantum cohomology and singularities
(C. Hertling and M. Marcoli, Eds.), Aspects of Math., E36 (2004), 181-211. 


\bibitem{Ma} Y. Manin: $F$-manifolds with flat structure and Dubrovin's duality,  
Adv. Math. {\bf 198} no. 1, (2005), 5-26.  

\bibitem{Ok} K. Okubo:
Connection problems for systems of linear differential equations. 
Japan-United States Seminar on Ordinary Differential and Functional Equations (Kyoto, 1971),  238-248. 
Lecture Notes in Math., Vol. 243, Springer, Berlin, 1971.



\bibitem{OshiB} T. Oshima:
{\em Fractional calulus of Weyl algebra and Fuchsian differential equations}.
MSJ Memoirs, {\bf 28} 2012.

\bibitem{Oshi1} T. Oshima:
Classification of Fuchsian systems and their connection problem.
Exact WKB analysis and microlocal analysis, RIMS Kokyuroku Besssatsu (2013), B37, 163-192.

\bibitem{Oshi2} T. Oshima:
Katz's middle convolution and Yokoyama's extending operation.
Opuscula Math. {\bf 35} (2015), 665-688.


\bibitem{Ro} S. Romano: $4$-Dimensional Frobenius manifolds and Painlev\'e VI,
Math. Ann. {\bf 360} no. 3, (2014), 715-751.

\bibitem{Sab} C. Sabbah: {\em Isomonodromic Deformations and Frobenius Manifolds. An Introduction}. Universitext. Springer



\bibitem{Sai2} K. Saito: On a linear structure of the quotient variety by a finite reflexion group, 
Preprint RIMS-288 (1979), Publ. RIMS, Kyoto Univ. {\bf 29} (1993), 535-579.


\bibitem{SYS} K. Saito, T. Yano and J. Sekiguchi:
On a certain generator system of the ring of invariants of a finite reflection group.
Comm. Algebra {\bf 8} (1980), 373-408.







 \bibitem{Su} T. Suzuki:
 Six-dimensional Painlev\'e systems and their particular solutions in terms of rigid systems. J. Math. Phys. {\bf 55} (2014), no. 10, 102902, 30 pp. 




 \bibitem{Ya} D. Yamakawa: Middle convolution and Harnad duality, Math. Ann. \textbf{349} (2011), 215--262.
 
%
 \bibitem{Yo} T. Yokoyama:
 Construction of systems of differential equations of Okubo normal form with rigid monodromy.
 Math. Nachr. {\bf 279} (2006), 327-348.
 
\end{thebibliography}
\end{document}